\documentclass[11pt,letterpaper]{amsart}
\usepackage{amssymb,amsmath,amscd,amsbsy,amsthm,amsfonts,hyperref,accents,fontenc,bbm}

\input xy
\xyoption{all}

\pagestyle{plain}

\newcommand{\aaa}{{\Bbb A}}
\newcommand{\zz}{{\Bbb Z}}

\newcommand{\nn}{{\Bbb N}}

\newcommand{\qq}{{\Bbb Q}}
\newcommand{\rr}{{\Bbb R}}
\newcommand{\pp}{{\Bbb P}}
\newcommand{\ff}{{\Bbb F}}
\newcommand{\row}{\rightarrow}
\newcommand{\llow}{\longleftarrow}

\newcommand{\lrow}{\longrightarrow}
\renewcommand{\leq}{\leqslant}
\renewcommand{\geq}{\geqslant}
\newcommand{\nichego}[1]{}
\newcommand{\ov}[1]{\overline{#1}}

\newcommand{\wt}[1]{\widetilde{#1}}
\newcommand{\op}[1]{\operatorname{#1}}
\newcommand{\Hom}{\op{Hom}}
\newcommand{\ddim}{\op{dim}}

\newcommand{\laz}{{\Bbb L}}
\newcommand{\CH}{\op{CH}}
\newcommand{\Ch}{\op{Ch}}
\newcommand{\ca}{{\mathcal A}}
\newcommand{\hi}{{\mathcal X}}
\newcommand{\hit}{\widetilde{{\mathcal X}}}
\newcommand{\End}{{\Bbb E}nd}
\newcommand{\HHom}{{\Bbb H}om}
\newcommand{\ffi}{\varphi}

\newcommand{\la}{\langle}
\newcommand{\ra}{\rangle}

\newcommand{\Spc}{\op{Spc}}
\newcommand{\smk}{{\mathbf Sm}_k}
\newcommand{\SH}{\op{SH}}
\newcommand{\MGL}{\op{MGL}}
\newcommand{\DM}{\op{DM}}

\newcommand{\Qed}{\hfill$\square$\smallskip}
\newcommand{\Red}{\hfill$\triangle$\smallskip}

\renewenvironment{proof}{\noindent{\it Proof}:}{\vskip 5mm}

\theoremstyle{plain}
\newtheorem{proposition}{Proposition}[section]{\bf}{\it}
\newtheorem{theorem}[proposition]{Theorem}{\bf}{\it}
\newtheorem{lemma}[proposition]{Lemma}{\bf}{\it}
{\bf}{\it}
\newtheorem{definition}[proposition]{Definition}{\bf}{\rm}
{\bf}{\it}
{\bf}{\it}
\newtheorem{example}[proposition]{Example}{\bf}{\it}
\newtheorem{remark}[proposition]{Remark}{\bf}{\rm}
{\bf}{\rm}
\newtheorem{lem}{Lemma}[proposition]{\bf}{\it}
{\bf}{\it}
{\bf}{\it}

{\bf}{\it}
\newtheorem{corollary}[proposition]{Corollary}{\bf}{\it}
{\bf}{\it}

\begin{document}
	
	\title{On the Balmer spectrum of the Morel-Voevodsky category}
	\author{Peng Du \ {\small and} \ Alexander Vishik}
	\date{}
	\maketitle
	
	\begin{abstract}
		We introduce the {\it Morava-isotropic stable homotopy category}
		and, more generally, the {\it stable homotopy category of an
			extension $E/k$}. These ``local'' versions of the Morel-Voevodsky stable $\aaa^1$-homotopy category $\SH(k)$ are
		analogues of local motivic categories introduced in \cite{Iso}
		but with a substantially more general notion of ``isotropy''.
		This permits to construct the, so-called, {\it isotropic Morava points} of the Balmer spectrum $\op{Spc}(\SH^c(k))$ of (the compact part of) the Morel-Voevodsky category. These
		analogues of topological Morava points are parametrized by
		the choice of Morava K-theory and a $K(p,m)$-equivalence
		class of extensions $E/k$. This
		provides a large supply of new points, and substantially improves our understanding of the spectrum. An interesting new feature is that the specialization among isotropic points
		behaves differently than in topology. 
	\end{abstract}

	\section{Introduction}
	
	The spectrum of a commutative ring $A$ permits to look at the
	category of $A$-modules geometrically. The Balmer spectrum $\op{Spec}({\mathcal D})$  - \cite{Bal-1} plays a similar role for a 
	$\otimes$-trangulated category ${\mathcal D}$. This is a ringed topological space whose points are
	{\it prime} $\otimes\op{-}\triangle$-ed ideals of ${\mathcal D}$.
	
	One of the most important and useful $\otimes$-trangulated categories is the
	stable homotopy category $\SH$ in topology. The Balmer spectrum (of the compact part) of it was essentially computed by the famous {\it Nilpotence Theorem} of
	Devinatz-Hopkins-Smith \cite{DHS}, though in a different language, see \cite[Corollary 9.5]{BalSSS} for the modern
	description. It consists of a generic point ${\frak a}_0$ 
	given by the
	prime ideal of torsion compact objects of $\SH$ (in other
	words, by objects killed by $\wedge\op{H}\qq$), which specializes
	into the chains of ordered points parametrized by prime numbers
	$p$. Points on the $p$-chain are numbered by natural numbers 
	$1\leq m\leq\infty$ (and infinity), and ${\frak a}_{p,n}$-point is a specialization of the ${\frak a}_{p,m}$-one if and only if 
	$n\geq m$. 
	The prime ideal ${\frak a}_{p,m}$ consists of objects annihilated by $\wedge K(p,m)$, where $K(p,m)$ is the
	Morava K-theory. In particular, ${\frak a}_{p,\infty}$-points are closed and the respective ideal consists of objects annihilated by $\wedge \op{H}\ff_p$. The Balmer spectrum $\op{Spc}(D(Ab)^c)$ of (the compact part of) the ``topological motivic category''
	$D(Ab)$ is isomorphic to $\op{Spec}(\zz)$, it maps injectively
	into $\op{Spc}(\SH^c)$ via the (singular complex) ``motivic'' functor
	$M:\SH^c\row D(Ab)^c$, covering the generic 
	${\frak a}_0$ and closed ${\frak a}_{p,\infty}$ points.
	So, in a sense, the ``genuinely homotopic'' life is happening
	inbetween these two extremes - over the Morava points ${\frak a}_{p,m}$, where $m$ is finite. 
	
	In the algebro-geometric context, the analogue of $\SH$ is the
	{\it stable $\aaa^1$-homotopy category} $\SH(k)$ of Morel-Voevdosky - \cite{MV}. In the current paper we study the
	Balmer spectrum of (the compact part of) it. The category $\SH(k)$ is substantially more complicated than its topological
	counterpart. This is already apparent by the comparison of the ``atomic objects'' (points) in topology and algebraic geometry.
	While in topology there is only one kind of a point, in algebraic
	geometry there are many types of them, corresponding to various
	extensions $E/k$. Below we will see that this is reflected in the 
	structure of the Balmer spectra of the respective categories.
	Balmer has shown that, in a rather general situation, the spectrum of the $\otimes\op{-}\triangle$-ed category surjects to the
	Zariski spectrum of the endomorphism ring of the unit object.
	In particular, $\op{Spc}(\SH^c(k))$ surjects to $\op{Spec}(\op{GW}(k))$ - \cite[Corollary 10.1]{BalSSS}, where $GW(k)$ is
	the {\it Grothendieck-Witt ring} of quadratic forms (which
	coincides with the $\op{End}_{\SH(k)}({\mathbbm{1}})$ by the 
	result of Morel -\cite{Mor04}). This was refined by Heller-Ormsby \cite{HO}, who showed that $\op{Spc}(\SH^c(k))$ surjects also to
	the spectrum of the {\it Milnor-Witt K-theory} of $k$ (the latter
	spectrum is bigger than that of the $GW(k)$). There is also
	the topological realisation functor (we assume everywhere that
	characteristic of our field is zero) $\SH(k)\row\SH$ which gives
	a copy of $\op{Spc}(\SH)$ inside our Balmer spectrum. But this is far from everything. Our results, in particular, show that the discrepancy between the two
	spectra is dramatic, even in terms of cardinality of them. 
	
	Our approach to $\SH(k)$ is based on the idea of {\it isotropic realisations}. Such realisations and, more generally, the 
	{\it local versions} of the category corresponding to field extensions $E/k$, in the case of the Voevodsky category of motives $\DM(k)$, were constructed in \cite{PicQ} and \cite{Iso}. In the motivic
	case, one needs to choose a prime number $p$ and then annihilate
	the motives of $p$-anisotropic varieties over $k$, that is, varieties which have no (closed) points of degree prime to $p$
	(the idea of such localisation is due to Bachmann - \cite{BQ}).
	We get the {\it isotropic motivic category} $\DM(k/k;\ff_p)$
	together with the natural functor 
	$\DM(k)\row\DM(k/k;\ff_p)$.
	One may combine this with the extension of fields and
	get functors with values in $\DM(E/E;\ff_p)$, for every extension
	$E/k$. For a general field $E$, the isotropic motivic category
	$\DM(E/E;\ff_p)$ will not be particularly simple (for example,
	for an algebraically closed field, it will coincide with the
	global category $\DM(E)$). But there is an important class of
	{\it flexible fields} - \cite[Definition 1.1]{Iso} for which this
	category is handy. These are purely transcendental extensions
	of infinite transcendence degree of some other fields. Any field
	$E$ can be embedded into its {\it flexible closure}
	$\wt{E}:=E(\pp^{\infty})$, and so, for any extension $E/k$, we
	get the {\it isotropic realisation} functor
	$\psi_{p,E}:\DM(k)\row\DM(\wt{E}/\wt{E};\ff_p)$ with values in a
	much simpler category. It was shown in \cite[Theorem 5.13]{INCHKm} that the kernels ${\frak a}_{p,E}$ of these functors
	(on the compact part) are prime $\otimes\op{-}\triangle$-ed ideals and so, provide {\it isotropic points} of the Balmer spectrum $\op{Spc}(\DM(k)^c)$. Two such points ${\frak a}_{p,E}$ and
	${\frak a}_{q,F}$ coincide if and only if $p=q$ and extensions
	$E/k$ and $F/k$ are {\it $p$-equivalent} - 
	\cite[Definition 2.2]{Iso}, and there is a huge number of such $p$-equivalence classes of extensions, in general - see \cite[Example 5.14]{INCHKm}. 
	
	In the case of $\SH(k)$, we need to use a more general version
	of {\it isotropy}. In \cite[Definition 2.1]{INCHKm},
	the notion of $A$-anisotropic varieties
	for any oriented cohomology theory $A^*$ was introduced - see
	Definition \ref{A-anis-var}. In the case of the Chow groups
	$\CH^*/p$ modulo $p$ it coincides with the $p$-anisotropy considered above. The natural theories to consider are the 
	Morava K-theories $K(p,m)^*$ and related theories $P(m)^*$.
	These {\it small} algebro-geometric analogues of the respective topological theories are {\it free theories} in the sense of Levine-Morel  \cite{LM} on the category $\smk$ of smooth varieties, obtained from the $BP^*$-theory by change of coefficients: $K(p,m)^*(X):=BP^*(X)\otimes_{BP}K(p,m)$ and
	$P(m)^*(X):=BP^*(X)\otimes_{BP}P(m)$, where $BP=\zz_{(p)}[v_1,v_2,\ldots]$ with generators $v_r$ of dimension $p^r-1$,
	$K(p,m)=\ff_p[v_m,v_m^{-1}]$, with the map sending other
	generators to zero (our constructions don't depend on the choice of these generators), and $P(m)=BP/I(m)$, where
	$I(m)=(v_0,v_1,\ldots,v_{m-1})$ is the {\it invariant ideal}
	of Landweber \cite{La73b}. We construct the {\it $K(p,m)$-isotropic} version $\SH_{(p,m)}(k/k)$ of the Morel-Voevodsky category,
	for all $p$ and $m$.

	To start with, (similar to the motivic case) we annihilate the 
	$\Sigma^{\infty}_{\pp^1}$-spectra of all $K(p,m)$-anisotropic
	varieties over $k$. And then, in the resulting category
	$\widehat{\SH}_{(p,m)}(k/k)$, we
	annihilate all compact objects on whose $MGL$-motives $v_m$ is
	nilpotent - see Definition \ref{isotr-cat-K-p-m}. Alternatively,
	$\SH_{(p,m)}(k/k)$ is the Verdier localisation of $\SH(k)$ by 
	the category generated by the compact objects whose $MGL$-motives
	belong to the localising $\otimes$-ideal generated by 
	$MGL$-motives of $K(p,m)$-anisotropic varieties and
	${\mathbbm{1}}^{MGL}/v_m$ (where we assume $v_{\infty}=1$).
	In the case $m=\infty$ and non-formally real fields, the category $\SH_{(p,\infty)}(k/k)$
	is obtained in one step as in the motivic situation and is
	closely related to $\DM(k/k;\ff_p)$. It coincides with the category considered by Tanania in \cite{ISHG} and \cite{Tan-COIMC}. We have a natural projection from ``global'' to isotropic
	category from which, as above, we obtain the {\it isotropic realisations} 
	$$
	\psi_{(p,m),E}:\SH(k)\row\SH_{(p,m)}(\wt{E}/\wt{E}), 
	$$
	for
	all extensions $E/k$. These realisations take values in isotropic
	motivic categories over flexible fields. Below we will show that
	the zero ideal of such a category is prime. Moreover, it is
	expected that the Balmer spectrum of it is a singleton. 
	One may introduce a certain $K(p,m)$-equivalence relation
	$\stackrel{(p,m)}{\sim}$
	on the set of extensions - see Section \ref{section-ishc},
	and, as appears, it is sufficient to use one representative $E$ from each
	equivalence class.
	Denote as ${\frak a}_{(p,m),E}$ the
	kernel of $\psi_{(p,m),E}$ restricted to the compact part
	$\SH^c(k)$. It is a $\otimes\op{-}\triangle$-ed ideal. 
	Our main result is Theorem \ref{Main}:
	
	\begin{theorem}
		\label{Main-intro}
		\phantom{a}\hspace{5mm}\phantom{a}
		\begin{itemize}
			\item[$(1)$] ${\frak a}_{(p,m),E}$ are prime and so, points of the Balmer spectrum $\Spc(\SH^c(k))$. 
			\item[$(2)$] ${\frak a}_{(p,m),E}={\frak a}_{(q,n),F}$ if and only if $p=q$, $m=n$ and $E/k\stackrel{(p,m)}{\sim}F/k$.
			\item[$(3)$] ${\frak a}_{(p,\infty),E}$ is the image of the point ${\frak a}_{p,E}$ of \cite[Theorem 5.13]{INCHKm} under
			the natural map of spectra $\Spc(\DM^c(k))\row\Spc(\SH^c(k))$
			induced by the motivic functor $M:\SH^c(k)\row \DM^c(k)$.
		\end{itemize}
	\end{theorem}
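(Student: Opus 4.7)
The plan is to reduce each of the three items to a statement about the target category $\SH_{(p,m)}(\wt{E}/\wt{E})$ of the realisation $\psi_{(p,m),E}$, parallel to the motivic treatment in \cite{INCHKm}.

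For (1), primality of ${\frak a}_{(p,m),E}$ is equivalent to the assertion that the zero ideal on the image of $\SH^c(k)$ inside $\SH_{(p,m)}(\wt{E}/\wt{E})$ is prime. Since this image generates the compact part of the isotropic category (up to idempotent completion), it suffices to check that this compact part has no nontrivial tensor zero-divisors. The strategy is to run the argument through the $\MGL$-module functor: by construction, $\SH_{(p,m)}(k/k)$ annihilates exactly those compact objects whose $\MGL$-motives become $v_m$-nilpotent modulo $K(p,m)$-anisotropic motives, so an equation $X\wedge Y=0$ in the target category lifts to a statement about $v_m$-inverted $\MGL$-modules over the flexible field $\wt{E}$. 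One then invokes the Chow-level isotropic primality of \cite[Theorem 5.13]{INCHKm}, bootstrapped along the $BP$-tower using the Landweber-invariance of the ideal $I(m)$. This is the main technical obstacle.

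For (2), the ``if'' direction follows once one knows that the equivalence relation $\stackrel{(p,m)}{\sim}$ is designed (cf.\ Section \ref{section-ishc}) so that the realisations produce equivalent target categories on $\SH^c(k)$; combined with primality from (1) this yields the equality of ideals. The ``only if'' direction splits into three cases. When $p\neq q$, the realisations differ already on the Morava spectra: $v_m$ is invertible in $K(p,m)^*$ but acts nilpotently on $K(q,n)^*$. When $p=q$ but $m\neq n$, the object ${\mathbbm{1}}^{\MGL}/v_m$ lies in precisely one of the two ideals, by the same $v_m$-invertibility argument. Finally, when $(p,m)=(q,n)$ but the extensions are inequivalent, one picks a smooth projective variety $X$ which is $K(p,m)$-anisotropic over $\wt{E}$ but not over $\wt{F}$ (or vice versa); its $\Sigma^{\infty}_{\pp^1}$-suspension spectrum distinguishes the two prime ideals.

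For (3), I would verify that the motivic functor $M:\SH^c(k)\row\DM^c(k)$ matches the generators of ${\frak a}_{(p,\infty),E}$ with those of ${\frak a}_{p,E}$. Indeed, for $m=\infty$ the $v_{\infty}=1$ convention means that the second stage of our construction contributes only ${\mathbbm{1}}^{\MGL}/(v_0,v_1,\ldots)$, whose $M$-image is ${\mathbbm{1}}/p$ in $\DM^c(k)$; moreover, $K(p,\infty)$-anisotropy coincides with the $p$-anisotropy of \cite{Iso}. Commutativity of the resulting square of realisations yields ${\frak a}_{(p,\infty),E}=M^{-1}({\frak a}_{p,E})$, which is exactly the assertion that ${\frak a}_{(p,\infty),E}$ is the image of ${\frak a}_{p,E}$ under $\Spc(M):\Spc(\DM^c(k))\row\Spc(\SH^c(k))$.
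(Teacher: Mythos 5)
Your overall architecture (reduce everything to the target categories of the realisations) matches the paper's, but at each of the three decisive points the proposal either stops short of the actual argument or proposes a step that does not work.

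For $(1)$, you correctly reduce primality of ${\frak a}_{(p,m),E}$ to the absence of $\otimes$-zero-divisors among images of compact objects in $\SH_{(p,m)}(\wt{E}/\wt{E})$, but the sentence ``invoke the Chow-level isotropic primality of \cite[Theorem 5.13]{INCHKm}, bootstrapped along the $BP$-tower'' is precisely the part that needs a proof, and no such direct bootstrap from the $m=\infty$ (Chow) case to finite $m$ exists. The paper's route is entirely different in substance: one puts a weight structure on $\widehat{\MGL}^c_{(p,m)}\!-\!mod$, identifies its heart over a flexible field with \emph{numerical} $P(m)[\ov{x}]$-Chow motives via Theorem \ref{Thm-iso-num}, passes to the semisimple category $Chow^{K(p,m)}_{Num}$ (Proposition \ref{semi-simpl-Kpm}), uses Sosnilo and Aoki to make the Morava weight complex functor $t_{K(p,m)}$ tensor-triangulated, and then proves (Propositions \ref{Ann-b}, \ref{sq-J-U-p-r}, \ref{vm-nilp-Km-triv}, using Landweber-Novikov invariance and a Tor spectral sequence) that vanishing of $t_{K(p,m)}(\widehat{M}^{MGL}(U))$ is equivalent to $v_m$-nilpotence. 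Without some version of this chain your claim that ``$X\wedge Y=0$ lifts to a statement about $v_m$-inverted $\MGL$-modules'' with a prime zero ideal is unsupported.

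For $(2)$, two problems. First, in the case $p=q$, $m\neq n$ you propose to separate the ideals by ${\mathbbm{1}}^{\MGL}/v_m$; but this is an object of $\MGL\!-\!mod$, not of $\SH^c(k)$, and $v_m$ does not lift to a self-map of the sphere spectrum, so it cannot witness a difference between ideals of $\SH^c(k)$. The paper instead uses genuine test spectra $X_{p,n}$ (Mitchell's construction adapted in \cite{TS}) whose $\MGL$-motives have the required $v_r$-nilpotence pattern; note also that the naive containments fail (Remark \ref{strange-iso-spec}), so both directions of Proposition \ref{m-Xpn} are needed. Second, the ``if'' direction (equivalent extensions give equal ideals) is not automatic from the design of $\stackrel{(p,m)}{\sim}$: the target categories live over different fields, and the paper must introduce the local categories $\SH_{(p,m)}(E/k)$ and prove conservativity of the local-to-isotropic functor over a flexible base (Proposition \ref{D-A9}) before comparing kernels (Proposition \ref{ideals-Kpm-eq-ext}). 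Your treatment of inequivalent extensions by an anisotropic/isotropic witness variety is essentially the paper's, modulo the reduction from arbitrary to finitely generated extensions.

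For $(3)$, commutativity of the square of realisations only gives ${\frak a}_{(p,\infty),E}\subseteq M^{-1}({\frak a}_{p,E})$; the reverse inclusion requires a conservativity statement for the comparison functor $\SH_{(p,\infty)}(\wt{E}/\wt{E})\row\DM(\wt{E}/\wt{E};\ff_p)$ on the relevant objects, which the paper obtains from the conservativity of the weight complex (equivalently, from Bachmann's theorem as in Remark \ref{m-inf-case}). This ingredient should be stated explicitly.
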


	We obtain a large supply of {\it isotropic points} of the Balmer
	spectrum $\op{Spc}(\SH^c(k))$. Note, that for a general field,
	the number of $(p,m)$-equivalence classes of extensions is large.
	For example, over the field of real numbers $\rr$, for every
	topological Morava point ${\frak a}_{2,m}$, $1\leq m\leq\infty$
	we have $2^{2^{\aleph_0}}$ isotropic points ${\frak a}_{(2,m),E}$ - see Example \ref{real-case}. In particular, the cardinality of $\op{Spc}(\SH^c(\rr))$ is
	equal to the cardinality of the set of all subsets of this category. 
	
	To show that the zero ideal of the Morava-isotropic category
	$\SH^c_{(p,m)}(k/k)$, over a flexible field $k$, is prime, we
	study the category $\widehat{\SH}^c_{(p,m)}(k/k)$
	which has natural projection $\widehat{M}^{MGL}$ to the
	$MGL$-motivic version $\widehat{\MGL}^c_{(p,m)}\op{-}mod$ - see
	(\ref{two}). The latter category has a natural {\it weight
		structure} in the sense of Bondarko \cite{Bon} whose heart
	can be identified with the category of 
	{\it isotropic $P(m)[\ov{x}]^*$-Chow motives} (the same generators $\ov{x}$ as in $\Omega^*_{(p)}=BP^*[\ov{x}]$).
	Here comes the crucial step.
	It was shown in \cite[Theorem 5.1]{INCHKm} that the category
	of {\it isotropic $P(m)^*$-Chow motives} (over a flexible field)
	is equivalent to the category
	of {\it numerical $P(m)^*$-Chow motives}. This identifies our
	heart with the numerical category
	$Chow^{P(m)[\ov{x}]}_{Num}(k)$ - see 
	Proposition \ref{weight-str-M}.
	The weight filtration assigns to any compact object $Y$ of $\widehat{\SH}^c_{(p,m)}(k/k)$ some choice $t(Y)$ of the 
	{\it weight complex} of it - an object of
	$K^b(Chow^{P(m)[\ov{x}]}_{Num}(k))$. The radicals of 
	$\laz$-annihilators of $Y$ and $t(Y)$ coincide, and if
	$Y$ is the $\widehat{MGL}$-motive of some 
	$U\in\widehat{\SH}^c_{(p,m)}(k)$, then the
	action of the Landweber-Novikov operations ensures that this
	radical is the invariant ideal $I(p,r)$ of Landweber, for
	some $r\geq m$ - see Proposition \ref{sq-J-U-p-r}. 
	
	The $P(m)^*$-theory 
	naturally projects to the Morava K-theory $K(p,m)^*$,
	which induces the functor 
	$K^b(Chow^{P(m)[\ov{x}]}_{Num}(k))\row K^b(Chow^{K(p,m)}_{Num}(k))$.
	But the category of numerical Morava-motives is semi-simple.
	As a result, we obtain a well-defined functor
	$t_{K(p,m)}:\widehat{\MGL}^c_{(p,m)}\op{-}mod\row 
	K^b(Chow^{K(p,m)}_{Num}(k))$. Moreover, we prove that the $\widehat{M}^{MGL}(U)$, for $U\in\widehat{\SH}^c_{(p,m)}(k/k)$, belongs to the
	kernel of it if and only if it is annihilated by 
	some power of $v_m$ - see Proposition \ref{vm-nilp-Km-triv}.
	It follows from the results of Sosnilo \cite{Sos} and
	Aoki \cite{Aok} that $t_{K(p,m)}$ is a $\otimes\op{-}\triangle$-ed
	functor. Finally, the zero ideal in $K^b(Chow^{K(p,m)}_{Num}(k))$
	is prime, since the respective $K(p,m)$-Chow category is semi-simple and the ring $K(p,m)$ is an integral domain. This proves
	part $(1)$ of the above Theorem. 
	
	To prove part $(2)$, in analogy with
	\cite[Definition 2.3]{Iso}, we introduce the {\it local}
	versions $\SH^c_{(p,m)}(E/k)$ of the Morel-Voevodsky category,
	parametrized by finitely generated extensions $E/k$ (and the 
	choice of Morava K-theory) - see 
	Definition \ref{isotr-cat-K-p-m-E-k} and then prove that the
	local-to-isotropic functor is conservative on the image
	of $\SH^c(k)$, if $k$ is flexible - Proposition \ref{D-A9}. 
	This identifies the $\stackrel{(p,m)}{\sim}$-equivalence class
	of the point ${\frak a}_{(p,m),E}$ - 
	Proposition \ref{ideals-Kpm-eq-ext}. The prime number $p$ is its
	characteristic. Finally, as in topology, the number $m$ is determined with the help of certain ``test spaces'' $X_{p,n}$ of
	{\it $p$-type} exactly $n$. Here we use 
	the construction of Mitchell \cite{Mit} adapted to the 
	algebro-geometric situation - \cite{TS}. 
	
	The equivalence relation $\stackrel{(p,m)}{\sim}$ is 
	{\it coarser} than $\stackrel{(p,n)}{\sim}$, for $m\leq n$, 
	so we get the
	natural surjection 
	$$
	{\mathcal P}_{(p,m)}\twoheadleftarrow {\mathcal P}_{(p,n)}
	$$
	between the respective sets of Morava-isotropic points. 
	Nevertheless, the expected analogue of the topological
	specialisation ${\frak a}_{p,m}\succ {\frak a}_{p,n}$ doesn't
	hold for isotropic points. The obstruction is given, for example,
	by the {\it norm-varieties} of Rost - see Remark \ref{strange-iso-spec}.
	
	The article is organized as follows. In Section \ref{two}, we recall the notion of a {\it small} oriented cohomology theory,
	introduce (generalized) {\it isotropic equivalence}, {\it flexible fields} and cite the results from \cite{INCHKm} we need.
	In Section \ref{section-ishc}, we define {\it Morava-isotropic stable homotopy categories} and {\it isotropic realisations}. 
	In Section \ref{section-weight} we introduce the needed {\it weight structure}, while Section \ref{section-LN} is devoted to
	the Landweber-Novikov operations. In Section \ref{section-nMm},
	we make a closer look at {\it numerical Morava-Chow motivic category} and introduce {\it Morava weight cohomology}. In
	Section \ref{section-MT}, we formulate our Main Theorem and prove
	parts $(1)$ and $(3)$ of it. Finally, in Section \ref{section-lsh} we introduce the {\it local stable homotopy categories}
	and prove part $(2)$ of the Main Theorem.
	
	\medskip
	
	\noindent
	{\bf Acknowledgements:}
	We are very grateful to Tom Bachmann, Paul Balmer, Martin Gallauer, John Greenlees, Daniel Isaksen and Beren Sanders
	for very helpful discussions and stimulating questions. We would like to thank the Referees for useful suggestions and remarks which improved the exposition.
	The support of the EPSRC standard grant EP/T012625/1 is gratefully acknowledged.
	
	\section{Oriented cohomology theories and isotropic equivalence}
	Everywhere in this article the ground field $k$ will be any field
	of characteristic zero. This assumption will allow us to work comfortably with oriented cohomology theories and, in particular, to use the algebraic cobordism of Levine-Morel. 
	
	Let $A^*:\smk\row Rings$ be an oriented cohomology theory with localisation (on the category of smooth varieties over $k$) in the sense of \cite[Definition 2.1]{SU} (which is the standard axioms of Levine-Morel \cite[Definition 1.1.2]{LM} plus the excision axiom $(EXCI)$). We will also call such theories {\it small} in contrast to
	{\it large} theories represented by spectra in $\SH(k)$. 
	
	The {\it algebraic cobordism} $\Omega^*$ of Levine-Morel \cite{LM}
	is the universal oriented cohomology theory. That is, for any oriented theory $A^*$, there is a unique morphism of theories (a map respecting pull-backs and push-forwards) $\theta_A:\Omega^*\row A^*$ - see \cite[Theorem 1.2.6]{LM}.
	By the result of Levine \cite{Lcomp}, the theory $\Omega^*$ coincides with the
	{\it pure part} $MGL^{2*,*}$ of the $MGL$-theory of Voevodsky.
	
	Among oriented cohomology theories, there are {\it free theories} in the sense of Levine-Morel. Such theories $A^*$ are obtained from algebraic cobordism by change of coefficients:
	$A^*=\Omega^*\otimes_{\laz}A$ and are in $1$-to-$1$ correspondence with the {\it formal group laws} $\laz\row A$. These are exactly the
	{\it theories of rational type} in the sense of \cite{SU}.
	
	Recall from \cite[Definition 2.1]{INCHKm}, that for any (small) oriented cohomology theory $A^*$, one can introduce the notion of $A$-anisotropic varieties.
	
	\begin{definition}
		\label{A-anis-var}
		Let $X\stackrel{\pi}{\row}\op{Spec}(k)$ be a smooth projective variety over $k$. Then $X$ is called $A$-anisotropic, if the
		push-forward map $\pi_*:A_*(X)\row A_*(\op{Spec}(k))$ is zero. 
	\end{definition}
	
	Such (non-empty) varieties exist only if $A^*$ is $n$-torsion, for 
	some $n\in\nn$ - see \cite[Remark 2.4(1)]{INCHKm}. In the case 
	$A^*=\CH^*/n$ of Chow groups modulo $n$, we get the usual notion of
	$n$-anisotropy.
	
	If $X$ is a smooth projective variety and $x\in A^*(X)$, we say that $x$ is {\it $A$-anisotropic}, if it is in the image of the push-forward map from some $A$-anisotropic variety. The $A^*$-anisotropic
	classes form an ideal stable under $A^*$-correspondences - see
	\cite[Proposition 2.6]{INCHKm}. Following \cite[Definition 2.7]{INCHKm} we can define:
	
	\begin{definition}
		\label{A-iso-theory}
		For $X/k$ - smooth projective, define:
		$$
		A^*_{iso}(X):=A^*(X)/(A-\text{anisotropic classes}).
		$$
	\end{definition}
	
	Applying the construction from \cite[Example 4.1]{Iso}, this extends
	to the oriented cohomology theory $A^*_{iso}$ on $\smk$ 
	(in the sense of \cite[Definition 2.1] {SU}) - the 
	{\it isotropic version} of the theory $A^*$.
	By the projection formula, any {\it anisotropic} class is {\it numerically trivial} in the sense of the pairing
	$$
	\la\,,\ra:A^*(X)\times A^*(X)\row A
	$$
	given by $\la x,y\ra:=\pi_*(x\cdot y)\in A=A^*(\op{Spec}(k))$. Thus,
	the isotropic version of the theory surjects to the numerical one:
	$A^*_{iso}\twoheadrightarrow A^*_{Num}$. 
	
	More generally, if $A^*$ and $B^*$ are two oriented cohomology theories,
	we may consider the {\it $A$-isotropic version} $B^*_{A\op{-}iso}$
	of the theory $B^*$, defined for smooth projective varieties as:
	$$
	B^*_{A\op{-}iso}(X):=B^*(X)/(A-\text{anisotropic classes}).
	$$
	The respective category of Chow motives will be denoted 
	$Chow^B_{A\op{-}iso}(k)$.
	
	As in topology, the $p$-localisation of algebraic cobordism splits
	as a polynomial algebra over the $BP^*$-theory:
	$\Omega^*_{\zz_{(p)}}=BP^*[x_i|_{i\neq p^r-1}]=BP^*[\ov{x}]$, where
	the coefficient ring of the $BP^*$-theory is
	$BP=\zz_{(p)}[v_1,v_2,\ldots]$ with $\ddim(v_i)=p^i-1$.
	On $\Omega^*$ and $BP^*$ there is the action of the Landweber-Novikov
	operations and, by the results of Landweber \cite{La73b}, the ideals
	$I(m):=(p,v_1,\ldots,v_{m-1})\subset BP$, for any $1\leq m\leq\infty$, are invariant under this action (actually, together with $I(0):=(0)$, are the only prime invariant ideals). 
	
	For any prime $p$ and any $1\leq m\leq\infty$, we can introduce the
	free theories $K(p,m)^*:=BP^*\otimes_{BP}K(p,m)$, where $K(p,m)=\ff_p[v_m,v_m^{-1}]$ (the other variables are mapped to zero),
	for $m<\infty$ and $K(p,m)=\ff_p$ (with the augmentation map), for 
	$m=\infty$. These are (small) {\it Morava K-theories}.
	We will also use the related free theories 
	$P(m)^*:=BP^*\otimes_{BP}P(m)$, where $P(m)=BP/I(m)$.
	Note that $K(p,\infty)^*=P(\infty)^*=\Ch^*=\CH^*/p$ is just Chow
	groups modulo $p$. 
	The introduced {\it small} theories $BP^*$, $K(p,m)^*$, $P(m)^*$ are {\it pure parts} of the respective {\it large} theories - see \cite[Corollary 6.3]{LT}. 
	
	The isotropy of varieties with respect to these theories may
	be determined from the invariant
	$I(X):=\op{Im}(\pi_*:BP_*(X)\row BP)$. Being an invariant (and finitely-generated, since $\Omega^*(X)$ as a $\laz$-module is generated in non-negative co-dimension by the result of Levine-Morel \cite{LM}) prime ideal of $BP$, the radical of it is either 
	$I(n)$, for $0\leq n<\infty$, or the whole $BP$.
	In \cite{INCHKm} it was shown that:
	
	\begin{proposition} {\rm (\cite[Proposition 4.15]{INCHKm})}
		\label{anis-Pm-Km-IX}
		The following conditions are equivalent:
		\begin{itemize}
			\item[$(1)$] $X$ is $P(m)$-anisotropic; 
			\item[$(2)$] $X$ is $K(m)$-anisotropic;
			\item[$(3)$] $\sqrt{I(X)}=I(n)$, for some $1\leq n\leq m$.
		\end{itemize}
	\end{proposition}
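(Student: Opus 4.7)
\noindent The plan is to translate the anisotropy conditions $(1)$ and $(2)$ into ideal-containment statements about $I(X)$, and then apply the description of $\sqrt{I(X)}$ recalled just before the statement. First I would observe that for any free theory $A^*=BP^*\otimes_{BP}A$ arising from a coefficient map $\ffi:BP\row A$, the push-forward $\pi_*^A:A_*(X)\row A$ is obtained from $\pi_*^{BP}$ by base change along $\ffi$, so its image is the ideal $\ffi(I(X))\cdot A$. Consequently $X$ is $A$-anisotropic iff $I(X)\subset\ker\ffi$. Specialising, $\ker(BP\twoheadrightarrow P(m))=I(m)$, whereas $\ker(BP\row K(p,m))$ equals $J_m:=(p,v_i\mid i\neq m)$ for $m<\infty$ and equals $I(\infty)$ for $m=\infty$. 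Note that $J_m$ is prime, since $BP/J_m=\ff_p[v_m]$ is a domain. Thus $(1)$ is equivalent to $I(X)\subset I(m)$ and $(2)$ to $I(X)\subset J_m$ (or to $I(X)\subset I(\infty)$ if $m=\infty$).

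Next I would use the preceding observation that $\sqrt{I(X)}$ is either $I(n)$ for some $0\leq n<\infty$ or all of $BP$. The value $BP$ is excluded under either $(1)$ or $(2)$, since $I(m)$ and $J_m$ are proper; and $I(0)=0$ is excluded because $X$ nonempty smooth projective forces $I(X)$ to contain the residue degree of a closed point, a nonzero element of $\zz_{(p)}\subset BP$. Hence $\sqrt{I(X)}=I(n)$ for some $1\leq n<\infty$.

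Finally, since both $I(m)$ and $J_m$ are radical, $I(X)$ lies in either ideal iff $\sqrt{I(X)}$ does. From the ascending chain $I(0)\subset I(1)\subset\cdots$ one has $I(n)\subset I(m)$ iff $n\leq m$. For $J_m$: the generators $p,v_1,\dots,v_{n-1}$ of $I(n)$ all lie in $J_m$ iff $n-1<m$, while if $n>m$ then $v_m\in I(n)\setminus J_m$ blocks the inclusion; so again $I(n)\subset J_m$ iff $n\leq m$. Both reductions yield the range $1\leq n\leq m$, giving $(1)\Leftrightarrow(3)\Leftrightarrow(2)$. The only nontrivial input, and the would-be main obstacle, is the classification of $\sqrt{I(X)}$ among the invariant primes of $BP$; but this is precisely the Landweber-type fact cited immediately before the proposition, so once it is granted the remaining argument is an elementary ideal-lattice computation.
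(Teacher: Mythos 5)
The paper does not prove this proposition at all: it is imported verbatim from \cite[Proposition 4.15]{INCHKm}, so there is no internal argument to compare against. Your reconstruction is correct and is the expected one: for a free theory the push-forward is the base change of $\pi_*^{BP}$, so anisotropy is exactly $I(X)\subseteq\ker(BP\to A)$, and the rest is the lattice computation with $I(m)$, $J_m=(p,v_i\mid i\neq m)$ and the Landweber classification of $\sqrt{I(X)}$ (granted as the cited input, together with the implicit nonemptiness of $X$, which you rightly flag to rule out $I(0)$).
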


	In our considerations, an important role will be played by a special
	type of fields called {\it flexible} - \cite[Definition 1.1]{Iso}. These are
	purely transcendental extensions of infinite transcendence degree of
	some other fields. Any field $k$ can be embedded into its
	{\it flexible closure} $\wt{k}=k(\pp^{\infty})$.

	It appears that for the $K(p,m)^*$ and $P(m)^*$-theories over a flexible field, the
	isotropic versions coincide with numerical ones.
	This fundamental fact, established in \cite{INCHKm}, plays the 
	crucial role in our constructions below.
	
	\begin{theorem}
		\label{Thm-iso-num} {\rm (\cite[Theorem 1.4]{INCHKm})}
		Let $k$ be flexible. Then, for $1\leq m\leq\infty$, 
		$$
		P(m)^*_{iso}=P(m)^*_{Num}\hspace{5mm}\text{and}\hspace{5mm}
		K(p,m)^*_{iso}=K(p,m)^*_{Num}.
		$$
	\end{theorem}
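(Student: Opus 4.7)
The easy direction $P(m)^*_{iso}\twoheadrightarrow P(m)^*_{Num}$ (and its $K(p,m)$-analogue) is immediate from the projection formula: if $y=\pi_*(z)$ with $\pi:Y\row X$ and $Y$ being $P(m)$-anisotropic, then for any $x\in P(m)^*(X)$,
$$
\la x,y\ra = \pi^X_*(x\cdot\pi_*(z)) = \pi^Y_*(\pi^*(x)\cdot z) = 0,
$$
because the structure morphism of $Y$ annihilates all $P(m)$-classes. So the content of the theorem is the reverse inclusion: numerically trivial $\Longrightarrow$ anisotropic, under the flexibility assumption.

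My plan is to first dispose of the limit case $m=\infty$, where $P(\infty)^*=K(p,\infty)^*=\Ch^*=\CH^*/p$. Here the statement is essentially the modulo-$p$ result of \cite{Iso}, whose proof uses flexibility to produce, for every numerically trivial $\Ch^*$-class $x$ on $X$, a suitable tower of generic-point specialisations and Rost-style correspondences that realise $x$ as a push-forward from a $p$-anisotropic variety. Having this in hand, the $K(p,m)^*$-case for finite $m$ will follow from the $P(m)^*$-case by inverting the regular element $v_m$, since both the ideal of anisotropic classes and the ideal of numerically trivial classes are stable under localisation at $v_m$.

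The main work is therefore to go from $m=\infty$ to arbitrary $m$ on the $P(m)^*$-side. Given a numerically trivial $x\in P(m)^*(X)$, I would argue by descending induction along the filtration by invariant ideals using Proposition \ref{anis-Pm-Km-IX}, which tells us that a smooth projective $Y$ is $P(m)$-anisotropic iff $\sqrt{I(Y)}=I(n)$ for some $1\leq n\leq m$. Lifting $x$ through the surjection $BP^*(X)\otimes_{BP}BP/I(m)=P(m)^*(X)$ and pairing with test classes coming from varieties of arbitrarily large $p$-type (constructed using flexibility and the Mitchell-type spaces $X_{p,n}$ mentioned in the introduction), one shows that numerical triviality forces the lift of $x$ to be divisible, modulo anisotropic classes, by each $v_j$ with $j<m$; combining this with the inductive Chow-case yields that $x$ itself is a push-forward from a variety with $\sqrt{I(Y)}\subset I(m)$, hence $P(m)$-anisotropic.

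The hardest step will be the very first inductive passage, i.e. getting from Chow modulo $p$ to $P(1)^*$ (or equivalently from $m=\infty$ down to finite $m$): one needs the flexibility of $k$ in an essential way to guarantee that the pool of $P(m)$-anisotropic witnesses is rich enough to detect every numerical relation, and to control the indeterminacy introduced by the polynomial generators $\ov{x}$ when working with $\Omega^*_{(p)}=BP^*[\ov{x}]$. All other steps — the localisation to Morava K-theory, the projection-formula direction, and the reduction to smooth projective supports — are formal consequences of the general machinery of small oriented cohomology theories recalled in this section.
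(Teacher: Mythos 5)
First, a point of calibration: the paper does not prove this theorem at all --- it is imported verbatim from \cite[Theorem 1.4]{INCHKm} and used as a black box (``This fundamental fact, established in \cite{INCHKm}, plays the crucial role\dots''). So the only question is whether your sketch would actually establish the result, and it would not. Your projection-formula argument for the easy direction (anisotropic $\Rightarrow$ numerically trivial) is correct and is exactly the remark made in Section 2. But the hard direction is never actually argued. Your ``base case'' $m=\infty$, i.e.\ $\Ch^*_{iso}=\Ch^*_{Num}$ over flexible fields, is not ``essentially the modulo-$p$ result of \cite{Iso}'': in \cite{Iso} this is a conjecture (verified only in special cases), and its general proof is itself the main content of \cite{INCHKm} --- the very theorem you are trying to prove. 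So the essential content is deferred to a reference that does not contain it.

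Two further steps you present as formal are not. (i) $K(p,m)=\ff_p[v_m,v_m^{-1}]$ is not a localisation of $P(m)=\ff_p[v_m,v_{m+1},\dots]$: one must invert $v_m$ \emph{and} kill all $v_j$ with $j>m$, and the quotient step is genuinely delicate --- the present paper has to invoke \cite[Proposition 4.14]{INCHKm} (a $K(p,m)$-numerically trivial class lifts to a $P(m)$-numerically trivial one only after multiplication by a power of $v_m$) precisely to relate the two numerical theories, cf.\ (\ref{Homs-eq}). (ii) The ``main work'' paragraph names a strategy but supplies no mechanism: the objects $X_{p,n}$ of \cite{TS} are $p$-local direct summands of suspension spectra, not smooth projective varieties producing classes one can pair against inside $P(m)^*(X)$; and since $v_j=0$ in $P(m)$ for $j<m$, it is unclear what ``divisibility by $v_j$ modulo anisotropic classes'' means for a class of $P(m)^*(X)$ or how it would yield that $x$ is a push-forward from a variety $Y$ with $(0)\neq\sqrt{I(Y)}\subseteq I(m)$. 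The entire difficulty of the theorem is to show that numerical triviality forces a class to be a push-forward from an anisotropic variety, and no step of your outline addresses why that should be true.
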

	
	\section{Isotropic stable homotopy categories}
	\label{section-ishc}
	
	Let $k$ be any field of characteristic zero and $\SH(k)$ be
	the stable $\aaa^1$-homotopy category of Morel-Voevodsky \cite{MV}. Let $p$ be a prime, $1\leq m\leq\infty$ be a natural number (or infinity), and $K(p,m)^*$ be the (small) Morava K-theory introduced above. 
	
	Let $Q_{(p,m)}$ be the disjoint union of all $K(p,m)$-anisotropic
	smooth projective varieties over $k$ (up to isomorphism). Let
	$\check{C}(Q_{p,m})$ be the respective \v{C}ech simplicial scheme and ${\frak X}_{Q_{(p,m)}}$ be the $\Sigma^{\infty}_{\pp^1}$-spectrum of it. It is a $\wedge$-projector (apply \cite[Lemma 1.15]{MV} to the projection ${\frak X}\wedge{\frak X}\row{\frak X}$). Let us denote as 
	$\Upsilon_{(p,m)}$ the complementary projector $\op{Cone}({\frak X}_{Q_{(p,m)}}\row{\mathbbm{1}})$. Denote as
	$\widehat{\SH}_{(p,m)}(k/k)$ the category
	$\Upsilon_{(p,m)}\wedge \SH(k)$. It is naturally a full subcategory of 
	$\SH(k)$ equivalent to the Verdier localisation of $\SH(k)$ by the localising subcategory generated by $K(p,m)$-anisotropic varieties. The natural projection $pr:\SH(k)\row \widehat{\SH}_{(p,m)}(k/k)$ is left adjoint to the embedding, which respects direct sums. Thus, $pr$ maps compact objects to compact ones.
	
	We have an adjoint pair:
	\begin{equation}
		\label{one}
		\xymatrix{
			M^{MGL}:\SH(k) \ar@<1ex>[rr]& & \MGL\op{-}mod:B\ar@<1ex>[ll]
		},
	\end{equation}
	where $M^{MGL}$ is a $\otimes$-functor and $B$ satisfies the projection formula. Here $\MGL$ is a highly structured commutative ring spectrum - see \cite{PPR}, and the
	category of $\MGL\op{-}mod$ has an enhancement as a
	symmetric monoidal $\infty$-category.	
	If we denote $\Upsilon^{MGL}_{(p,m)}:=M^{MGL}(\Upsilon_{(p,m)})$ and $\widehat{\MGL}_{(p,m)}\op{-}mod:=\Upsilon^{MGL}_{(p,m)}\otimes\MGL\op{-}mod$, then it descends to the adjoint pair
	\begin{equation}
		\label{two}
		\xymatrix{
			\widehat{M}^{MGL}:\widehat{\SH}_{(p,m)}(k/k) \ar@<1ex>[rr]& & \widehat{\MGL}_{(p,m)}\op{-}mod:\widehat{B}\ar@<1ex>[ll]
		}.
	\end{equation}
	
	Let $\ca$ be the localising subcategory of 
	$\widehat{\SH}_{(p,m)}(k/k)$ generated by all compact objects $U$ such that: for $m<\infty$, $v_m$ is nilpotent on $\widehat{M}^{MGL}(U)$; for $m=\infty$, $\widehat{M}^{MGL}(U)=0$.
	
	\begin{definition}
		\label{isotr-cat-K-p-m}
		The isotropic stable homotopy category $\SH_{(p,m)}(k/k)$ is the Verdier localisation of the category $\widehat{\SH}_{(p,m)}(k/k)$ with respect to $\ca$. It comes equipped with the natural functor
		$\SH(k)\row \SH_{(p,m)}(k/k)$.
	\end{definition}
	
	\begin{remark}
		\label{m-inf-case}
		If $m=\infty$ and $k$ is not formally real, then by the result of Bachmann - \cite{BachCons}, the $MGL$-motivic functor $M^{MGL}:\SH(k)\row \MGL\op{-}mod$ is conservative on $\Upsilon_{(p,\infty)}\wedge \SH^c(k)$. Thus, $\ca=0$, in this case, and the category $\SH_{(p,\infty)}(k/k)$ coincides with the category considered by Tanania in \cite{ISHG} and 
		\cite{Tan-COIMC}.
	\end{remark}

	Since the subcategory $\ca$ is generated by compact objects, our
	realisation restricts to compact parts:
	$\SH^c(k)\row\SH_{(p,m)}^c(k/k)$ - see \cite[Lemma 4.4.4]{Nee}.
	
	\begin{proposition}
		\label{altern-def-isotr-sh}
		Let $1\leq m <\infty$. Then $\SH_{(p,m)}(k/k)$ is the Verdier localisation of $\SH(k)$ by the subcategory, generated by compact objects $U$ such that $M^{MGL}(U)$ belongs to the localising $\otimes$-ideal of $\MGL\op{-}mod$ generated
		by ${\mathbbm{1}}^{MGL}/v_m:=\op{Cone}({\mathbbm{1}}^{MGL}(*)[2*]\stackrel{v_m}{\lrow}{\mathbbm{1}}^{MGL})$ and $MGL$-motives of $K(p,m)$-anisotropic varieties.
	\end{proposition}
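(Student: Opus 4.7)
The plan is to identify both sides as Verdier quotients of $\SH(k)$ by the same localising subcategory, via transitivity applied to the two-step definition of $\SH_{(p,m)}(k/k)$. Write $\mathcal{I}$ for the localising $\otimes$-ideal of $\MGL-mod$ generated by ${\mathbbm{1}}^{MGL}/v_m$ together with the $MGL$-motives of $K(p,m)$-anisotropic varieties, and $\mathcal{L}\subset\SH(k)$ for the localising subcategory generated by the compact $U$ with $M^{MGL}(U)\in\mathcal{I}$. Denoting by $\mathcal{L}_0\subset\SH(k)$ the localising subcategory generated by $\Sigma^{\infty}_{\pp^1}(X_+)$ for $X$ being $K(p,m)$-anisotropic, one has $\widehat{\SH}_{(p,m)}(k/k)=\SH(k)/\mathcal{L}_0$ and $\SH_{(p,m)}(k/k)=\widehat{\SH}_{(p,m)}(k/k)/\ca$; so by transitivity of Verdier localisations, $\SH_{(p,m)}(k/k)=\SH(k)/\mathcal{L}'$ for the localising $\mathcal{L}'\supset\mathcal{L}_0$ whose image in $\widehat{\SH}_{(p,m)}(k/k)$ equals $\ca$, and the task becomes $\mathcal{L}'=\mathcal{L}$.

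The heart of the argument is the following lemma on $\widehat{\MGL}-mod$: for a compact object $Y$, the element $v_m$ acts nilpotently on $Y$ if and only if $Y$ lies in the thick $\otimes$-ideal generated by ${\mathbbm{1}}^{MGL}/v_m$. The ``if'' direction is formal, as $v_m$ acts as zero on ${\mathbbm{1}}^{MGL}/v_m$ and $v_m$-nilpotency is closed under cones, shifts, retracts and arbitrary tensoring. For the converse, if $v_m^n=0$ on $Y$, the cofibre triangle
$$Y(n(p^m-1))[2n(p^m-1)]\stackrel{v_m^n}{\lrow}Y\lrow Y\otimes{\mathbbm{1}}^{MGL}/v_m^n$$
has first map zero, so splits and realises $Y$ as a retract of $Y\otimes{\mathbbm{1}}^{MGL}/v_m^n$; an easy induction via the triangles ${\mathbbm{1}}^{MGL}/v_m^{n-1}\row{\mathbbm{1}}^{MGL}/v_m^n\row{\mathbbm{1}}^{MGL}/v_m$ (with appropriate Tate twists) places ${\mathbbm{1}}^{MGL}/v_m^n$ in the thick $\otimes$-ideal generated by ${\mathbbm{1}}^{MGL}/v_m$.

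Finally, since $\ca$ is generated by compact objects, Neeman's compact-generation theorem identifies $\ca\cap\widehat{\SH}^{c}_{(p,m)}(k/k)$ with the thick subcategory generated by its compact generators, and the key lemma converts this to: a compact $U\in\widehat{\SH}_{(p,m)}(k/k)$ lies in $\ca$ iff $\widehat{M}^{MGL}(U)$ belongs to the thick $\otimes$-ideal of $\widehat{\MGL}-mod$ generated by ${\mathbbm{1}}^{MGL}/v_m$. Pulling this back along the smashing localisation $\MGL-mod\row\widehat{\MGL}-mod$, whose kernel on compacts is the thick $\otimes$-ideal generated by $MGL$-motives of $K(p,m)$-anisotropic varieties, makes it equivalent to $M^{MGL}(\widetilde{U})\in\mathcal{I}$ for any compact lift $\widetilde{U}\in\SH^c(k)$ (such lifts exist because $pr:\SH(k)\row\widehat{\SH}_{(p,m)}(k/k)$ is a left adjoint whose right adjoint preserves coproducts, hence preserves compacts). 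Combined with the trivial inclusion $\mathcal{L}\supset\mathcal{L}_0$, this yields $\mathcal{L}'=\mathcal{L}$. The main obstacle will be the key lemma together with the bookkeeping of thick-vs-localising and smashing-localisation passages on compact parts; once in place, the rest is essentially formal.
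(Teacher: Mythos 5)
Your proposal is correct and follows essentially the same route as the paper: the paper's proof is precisely the equivalence, for compact objects, between $v_m$-nilpotence and membership in the thick $\otimes$-ideal generated by ${\mathbbm{1}}^{MGL}/v_m$ (cited there as \cite[Theorem 2.15]{BalSSS}, which you instead prove by hand via the splitting/retract argument), combined with the same passage between $\MGL\!-\!mod$ and $\widehat{\MGL}_{(p,m)}\!-\!mod$ using that $\hi^{MGL}_{(p,m)}$ lies in the ideal generated by anisotropic motives. One minor imprecision: $v_m$ need not act as \emph{zero} on ${\mathbbm{1}}^{MGL}/v_m$ (cf.\ multiplication by $2$ on the mod-$2$ Moore spectrum), only nilpotently, but nilpotence is all your ``if'' direction requires.
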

	
	\begin{proof}
		Let $\hi_{(p,m)}^{MGL}:=M^{MGL}({\frak X}_{Q_{(p,m)}})$ be the projector complementary to $\Upsilon^{MGL}_{(p,m)}$.
		
		$(\rightarrow)$ Let $U\in \SH^c(k)$ be a compact object, such that
		$v_m$ is nilpotent on $\Upsilon^{MGL}_{(p,m)}\otimes M^{MGL}(U)$.
		Then by \cite[Theorem 2.15]{BalSSS}, this object belongs to the thick
		$\otimes\op{-}\triangle$-ed ideal of $MGL\op{-}mod$ generated by 
		$\Upsilon^{MGL}_{(p,m)}\otimes {\mathbbm{1}}^{MGL}/v_m$.
		But 
		$M^{MGL}(U)$ is an extension of $\hi_{(p,m)}^{MGL}\otimes M^{MGL}(U)$ and $\Upsilon^{MGL}_{(p,m)}\otimes M^{MGL}(U)$, where
		$\hi_{(p,m)}^{MGL}$ belongs to the localising $\otimes$-ideal generated by $MGL$-motives of $K(p,m)$-anisotropic varieties.
		Hence, $M^{MGL}(U)$ belongs to the specified ideal.
		
		$(\leftarrow)$ If $U\in \SH^c(k)$ be a compact object such that $M^{MGL}(U)$ belongs
		to the ideal described, then $\Upsilon^{MGL}_{(p,m)}\otimes M^{MGL}(U)$ belongs to the localising $\otimes$-ideal of 
		$\widehat{\MGL}_{(p,m)}\op{-}mod=\Upsilon^{MGL}_{(p,m)}\otimes \MGL\op{-}mod$
		generated by $\Upsilon^{MGL}_{(p,m)}\otimes{\mathbbm{1}}^{MGL}/v_m$.
		And this object is still compact. Hence, the action of $v_m$ on it
		is nilpotent, again by \cite[Theorem 2.15]{BalSSS}.
		\Qed
	\end{proof}
	
	The category $\SH_{(p,m)}(k/k)$ is a $\otimes-\triangle$-category and the natural localisation $\SH(k)\row \SH_{(p,m)}(k/k)$
	is a tensor triangulated functor.\\

	We can introduce the partial $K(p,m)$-order on the set of field extensions of $k$. 
	Let $E=\op{colim}_{\alpha}E_{\alpha}$, and $F=\op{colim}_{\beta}F_{\beta}$, for some finitely generated extensions $E_{\alpha}=k(Q_{\alpha})$ and $F_{\beta}=k(P_{\beta})$ with smooth models
	$Q_{\alpha}$ and $P_{\beta}$.
	We say that 
	$E/k\stackrel{(p,m)}{\geq}F/k$, if for any $\beta$, there exists $\alpha$ such that $(P_{\beta})_{E_{\alpha}}$ is $K(p,m)$-isotropic.
	This is equivalent to the surjectivity of the push-forward map
	$\pi_*:K(p,m)_*(Q_{\alpha}\times P_{\beta})\twoheadrightarrow K(p,m)_*(Q_{\alpha})$.
	Indeed, the Morava K-theory $K(p,m)^*$ is a {\it free} theory, and so, a {\it coherent} theory - see \cite[Definition 2.6, Corollary 2.13]{RNCT}. Thus, 
	$K(p,m)^*((P_{\beta})_{E_{\alpha}})$ is the colimit of $K(p,m)^*(U\times P_{\beta})$, where $U$ runs over
	non-empty open suschemes of $Q_{\alpha}$.
	The first condition then is equivalent to the fact that
	the push-forward $K(p,m)_*(U\times P_{\beta})\row K(p,m)_*(U)$ is surjective, for some such $U$.  
	As the pull-back map $K(p,m)^*(Q_{\alpha}\times P_{\beta})\row K(p,m)^*(U\times P_{\beta})$ is surjective as well (by the $(EXCI)$ axiom), there is $y\in K(p,m)_*(Q_{\alpha}\times P_{\beta})$, such that $\pi_*(y)-1=n\in K(p,m)_*(Q_{\alpha})$ is supported in positive co-dimension and so, is nilpotent. The projection formula then easily implies the surjectivity of $\pi_*$. The opposite implication is clear. 
	
	We say that two extensions are equivalent $E/k\stackrel{(p,m)}{\sim}F/k$, if $E/k\stackrel{(p,m)}{\geq}F/k$
	and $F/k\stackrel{(p,m)}{\geq}E/k$.
	
	Let $\wt{E}$ be the {\it flexible closure} of the field $E$ 
	(that is, $\wt{E}=E(\pp^{\infty})$). 
	Combining natural restrictions $\SH(k)\row \SH(\wt{E})$ with projections to isotropic categories,
	we get a family of {\it isotropic realisations}
	$$
	\psi_{(p,m),E}:\SH(k)\row \SH_{(p,m)}(\wt{E}/\wt{E}),
	$$
	where $p$ is a prime number, $1\leq m\leq\infty$, and $E/k$ runs over
	equivalence classes of the field extensions under the relation $\stackrel{(p,m)}{\sim}$ above. In 
	Theorem \ref{Main} we will show that the kernels of isotropic realisations corresponding to $(p,m)$-equivalent field extensions coincide.

	\section{The weight structure on $\op{MGL}\op{-}mod$}
	\label{section-weight}
	
	On $\MGL^c\op{-}mod$ we have the weight structure in the sense of Bondarko \cite{Bon} with the {\it heart} ${\mathcal H}$ consisting of the direct
	summands of $\MGL$-motives of smooth projective varieties. That is, ${\mathcal H}=Chow^{\Omega}(k)$.
	A similar structure exists on $\widehat{\MGL}_{(p,m)}\op{-}mod$.

	\begin{proposition}
		\label{weight-str-MGL-p-m}
		On the compact part $\widehat{\MGL}_{(p,m)}^c\op{-}mod$ we have a bounded, non-degenerate weight structure whose heart ${\mathcal H}$ 
		is equivalent to the category $Chow^{\Omega}_{K(p,m)\op{-}iso}(k)$ of 
		$K(p,m)$-isotropic $\Omega$-Chow motives.
	\end{proposition}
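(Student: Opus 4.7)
The plan is to transport the standard weight structure on $\MGL^c-mod$ along the localisation $\MGL^c-mod\twoheadrightarrow \widehat{\MGL}_{(p,m)}^c-mod$, and then identify its heart explicitly as $Chow^{\Omega}_{K(p,m)-iso}(k)$.

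First, I would recall the Bondarko weight structure on $\MGL^c-mod$ whose heart is $Chow^{\Omega}(k)$, the idempotent completion of the full subcategory of MGL-motives of smooth projective varieties. It is bounded and non-degenerate since such motives generate $\MGL^c-mod$ as a thick triangulated category. Second, I would observe that $\widehat{\MGL}_{(p,m)}^c-mod$ is, by construction, the Verdier quotient of $\MGL^c-mod$ by the thick $\otimes$-ideal $\ca_{(p,m)}$ generated by $M^{MGL}(Q)$ for $K(p,m)$-anisotropic smooth projective $Q$ --- and critically, all these generators lie inside the heart $\cal H$.

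Third, I would invoke the general machinery of Bondarko on induced weight structures on Verdier localisations (and its refinements in the work of Bondarko--Sosnilo): if a thick subcategory of a weighted triangulated category is generated by objects of the heart, then the weight structure descends to the quotient, with heart equal to the idempotent completion of the image of $\cal H$, and with boundedness and non-degeneracy inherited from the original. Fourth, I would identify this heart concretely. The image of $\cal H$ in the quotient has the same objects but with morphisms modulo those that factor through finite extensions of objects $M^{MGL}(Q)$ with $Q$ being $K(p,m)$-anisotropic. By the projection formula applied to the composition of $\Omega$-correspondences, any such factorisation produces an element of $\Omega^*(X\times Y)$ that is a pushforward from $X\times Q\times Y$, i.e.\ a $K(p,m)$-anisotropic correspondence in the sense of Definition \ref{A-anis-var} and \cite[Proposition 2.6]{INCHKm}. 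Conversely, every anisotropic correspondence clearly factors through $M^{MGL}(Q)$ for some anisotropic $Q$. Hence the morphism ideal killed in the quotient agrees exactly with the ideal of $K(p,m)$-anisotropic classes, so the heart becomes $Chow^{\Omega}_{K(p,m)-iso}(k)$ (which is already Karoubian, since the ambient category is, so the idempotent completion adds nothing).

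The main obstacle I expect is the fourth step --- controlling all possible factorisations through $\ca_{(p,m)}$, not only those going through a single smooth projective anisotropic variety but through arbitrary iterated cones and shifts of such. The weight-structure formalism is what resolves this: a morphism in the quotient between heart objects is zero iff it factors through an object of $\ca_{(p,m)}\cap {\cal H}$, which by the identification of the heart reduces to a direct summand of a finite sum of shifted $K(p,m)$-anisotropic $\Omega$-Chow motives. This lets the projection-formula/correspondence argument above go through, closing the identification of the heart and completing the proof.
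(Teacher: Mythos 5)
Your proposal is correct in outline but takes a genuinely different route from the paper. The paper does not transport the weight structure through a localisation theorem: it constructs the weight structure on $\widehat{\MGL}^c_{(p,m)}-mod$ directly, by declaring the heart to be the direct summands of $\widehat{M}^{MGL}$-motives of smooth projective varieties and then verifying Bondarko's negativity condition by hand. The whole content is Lemma \ref{weight-str-l-o}: after reducing by duality to $X=\op{Spec}(k)$, one computes $\Hom_{\widehat{\MGL}_{(p,m)}}(T(l)[2l],\widehat{M}^{MGL}(Y)[i])$ by filtering the complementary projector $\hit_{Q_{(p,m)}}$ via the \v{C}ech simplicial scheme (whose positive part is built from $M^{MGL}(Q^{\times(i+1)})[i]$) and using the vanishing $\MGL^{a,b}(Z)=0$ for $a>2b$; this yields both the vanishing for $i>0$ and the identification of the degree-zero $\Hom$s as $\op{Coker}(\Omega_l(Y\times Q_{(p,m)})\to\Omega_l(Y))$, i.e.\ $\Omega$ modulo anisotropic classes. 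Your approach instead outsources negativity to the global case plus a weight-exact localisation theorem of Bondarko--Sosnilo. This is a legitimate alternative, but to make it airtight you need to (i) invoke Neeman--Thomason to identify $\widehat{\MGL}^c_{(p,m)}-mod$ with the idempotent completion of the Verdier quotient of compact parts (the idempotent completion is harmless here since $Chow^{\Omega}_{K(p,m)-iso}(k)$ is defined as a Karoubi envelope, but it must be said); (ii) cite the precise Bondarko--Sosnilo statement that for a thick subcategory generated by a set of heart objects the quotient carries a bounded weight structure whose heart is the Karoubi closure of the heart modulo morphisms factoring through finite sums of (Tate twists of) the generators --- your phrase ``iterated cones and shifts'' is exactly what that theorem eliminates, so it does the heavy lifting; and (iii) note that the generators $M^{MGL}(Q)$ are closed under tensoring with motives of varieties (since $Q\times Y$ is again $K(p,m)$-anisotropic), so the localising subcategory generated by them is automatically the relevant $\otimes$-ideal. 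Your projection-formula identification of ``morphisms factoring through anisotropic varieties'' with ``$K(p,m)$-anisotropic classes'' is the same as \cite[Proposition 2.6]{INCHKm} and matches the paper's conclusion. What the paper's route buys is the explicit cokernel description of the $\Hom$-groups, obtained as a by-product and reused in Proposition \ref{weight-str-M}; what yours buys is conceptual economy, at the price of leaning on the localisation machinery.
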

	
	\begin{proof} Our weight structure is determined by its heart ${\mathcal H}$ which consists of direct summands of $\widehat{M}^{MGL}$-motives of smooth projective varieties. The fact that ${\mathcal H}$ indeed defines a weight structure follows from the following result.
		
		\begin{lemma}
			\label{weight-str-l-o}
			For $X,Y\in SmProj/k$, with $\ddim(Y)=d$ and $l\in\zz$, we have:
			\begin{equation*}
				\begin{split}
					(1)\hspace{5mm}\Hom_{\widehat{\MGL}_{(p,m)}}&(\widehat{M}^{MGL}(X)(l)[2l],\widehat{M}^{MGL}(Y))\\
					&=\Omega^{d-l}(X\times Y)/(K(p,m)-\text{anisotropic classes});\\
					(2)\hspace{5mm}\Hom_{\widehat{\MGL}_{(p,m)}}&(\widehat{M}^{MGL}(X)(l)[2l],\widehat{M}^{MGL}(Y)[i])=0,\hspace{2mm}\text{for}\,\,\,i>0.
				\end{split}
			\end{equation*}
		\end{lemma}
		
		\begin{proof}
			Due to duality for $\MGL\op{-}mod$, it is sufficient to treat the
			case $X=\op{Spec}(k)$. The argument closely follows that of \cite[Proposition 2.16]{Iso}. For a smooth $Q$, denote
			as $\hi_Q$ the $MGL$-motive of the respective \v{C}ech simplicial scheme, and as $\hit_Q$ the complementary projector
			$\op{Cone}(\hi_Q\row T)$, where $T={\mathbbm{1}}^{MGL}$ is the trivial $MGL$-Tate-motive. Then the hom group
			$\Hom_{\widehat{\MGL}_{(p,m)}}(T(l)[2l],\widehat{M}^{MGL}(Y))$ is 
			given by
			$$
			\Hom_{\MGL}(\hit(l)[2l],M^{MGL}(Y)\otimes\hit)=
			\Hom_{\MGL}(T(l)[2l],M^{MGL}(Y)\otimes\hit),
			$$
			where $\hit=\hit_{Q_{(p,m)}}$ and
			$Q_{(p,m)}$ is the disjoint union of all $K(p,m)$-anisotropic varieties over $k$.
			Let $(\hi_Q)_{\geq 1}=\op{Cone}(M^{MGL}(Q)\row\hi_Q)$. This object is an extension of $M^{MGL}(Q^{\times (i+1)})[i]$, for 
			$i\geq 1$. So, there are no $\Hom$s in $\MGL\op{-}mod$ from $T(l)[2l]$ to $M^{MGL}(Y)\otimes(\hi_Q)_{\geq 1}$ (and any positive shift $[k]$ of it), since $\MGL^{a,b}(Z)=0$, for $a>2b$ and
			any smooth variety $Z$. Because $\hit_Q$ is an extension of
			$(\hi_Q)_{\geq 1}[1]$ by $\op{Cone}(M^{MGL}(Q)\row T)$, we get:
			$$
			\Hom_{\MGL}(T(l)[2l],M^{MGL}(Y)\otimes\hit_{Q_{(p,m)}})=
			\op{Coker}(\Omega_l(Y\times Q_{(p,m)})\stackrel{\pi_*}{\lrow}
			\Omega_l(Y)).
			$$
			In other words, our group is obtained from $\Omega_l(Y)$ by moding-out {\it $K(p,m)$-anisotropic classes}.
			
			The vanishing of
			$\displaystyle\Hom_{\widehat{\MGL}_{(p,m)}}(\widehat{M}^{MGL}(X)(l)[2l],\widehat{M}^{MGL}(Y)[i])$, for positive $i$, follows from
			the same arguments.
			\Qed
		\end{proof}
		
		By \cite[Theorem 4.3.2, Proposition 5.2.2]{Bon},
		we have a unique bounded non-degenerate weight structure on
		$\widehat{\MGL}_{(p,m)}\op{-}mod$ whose heart ${\mathcal H}$ is the
		category of {\it $K(p,m)$-isotropic $\Omega$-Chow-motives} 
		$Chow^{\Omega}_{K(p,m)\op{-}iso}(k)$ which is the idempotent completion of the category of correspondences, where $\Hom$s are given by
		part $(1)$ of Lemma \ref{weight-str-l-o}.
		\Qed
	\end{proof}
	
	Recall, that $\Omega^*\otimes_{\zz}\zz_{(p)}=BP^*[x_i|_{i\neq p^s-1}]=BP^*[\ov{x}]$. The results of \cite{INCHKm}
	permit to
	identify, in the case of a flexible field, the heart of our weight structure with the category of certain numerical Chow motives, which is the crucial step of the article.

	\begin{proposition}
		\label{weight-str-M}
		Let $k$ be flexible. Then
		on the compact part $\widehat{\MGL}_{(p,m)}^c\op{-}mod$ we have a bounded, non-degenerate weight structure whose heart ${\mathcal H}$ 
		is equivalent to the category $Chow^{P(m)[\ov{x}]}_{Num}(k)$ of 
		numerical $P(m)[\ov{x}]$-Chow motives.
	\end{proposition}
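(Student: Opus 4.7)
The plan is to leverage Proposition \ref{weight-str-MGL-p-m}, which already establishes the desired weight structure with heart $Chow^{\Omega}_{K(p,m)-iso}(k)$, and to upgrade the description of the heart via an equivalence
$$Chow^{\Omega}_{K(p,m)-iso}(k)\simeq Chow^{P(m)[\ov{x}]}_{Num}(k)$$
valid over flexible $k$. The natural candidate for this equivalence is induced by the canonical quotient of free theories $\Omega^* \twoheadrightarrow P(m)[\ov{x}]^*$, arising from $\laz \to \laz_{(p)}\cong BP[\ov{x}] \twoheadrightarrow BP[\ov{x}]/I(m) = P(m)[\ov{x}]$. Since both categories are idempotent completions of correspondence categories with the same underlying objects, and Lemma \ref{weight-str-l-o} already computes the Hom-groups on the left, the task reduces to showing that the induced map
$$\Omega^{d-l}(X\times Y)/(K(p,m)-\text{anis})\lrow P(m)[\ov{x}]^{d-l}(X\times Y)/\op{Num}$$
is an isomorphism for every pair of smooth projective $X,Y$.

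First I would check that anisotropic classes land in the numerical ideal. By Proposition \ref{anis-Pm-Km-IX}, $K(p,m)$- and $P(m)$-anisotropy coincide, and a $P(m)$-anisotropic $Q$ is automatically $P(m)[\ov{x}]$-anisotropic, because $P(m)[\ov{x}]_*(Q) = P(m)_*(Q)\otimes_{\zz_{(p)}}\zz_{(p)}[\ov{x}]$ with pushforward acting as $\pi_*^{P(m)}\otimes\op{id}$. The required compatibility then reduces to an extension of Theorem \ref{Thm-iso-num} from $P(m)^*$ to $P(m)[\ov{x}]^*$: decomposing $\alpha=\sum_I \alpha_I \ov{x}^I$ along the polynomial generators, one verifies that both the anisotropic and the numerical ideals in $P(m)[\ov{x}]^*(X)$ admit coefficient-wise descriptions, namely $\alpha$ is anisotropic (resp., numerical) iff each $\alpha_I$ is anisotropic (resp., numerical) in $P(m)^*(X)$. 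Theorem \ref{Thm-iso-num} therefore transfers directly.

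Surjectivity of the induced map is immediate from surjectivity of $\Omega^*\twoheadrightarrow P(m)[\ov{x}]^*$. For injectivity, given $\alpha\in\Omega^*(X\times Y)$ whose image is numerically trivial, the extended Theorem \ref{Thm-iso-num} exhibits $\ov\alpha=\pi_*(\ov\gamma)$ from a $K(p,m)$-anisotropic $Q$; lifting $\ov\gamma$ to $\gamma\in\Omega^*$ produces an anisotropic $\pi_*(\gamma)$, reducing the claim to showing that $\ker(\Omega^*\to P(m)[\ov{x}]^*)$ is contained in the $K(p,m)$-anisotropic ideal. Each generator of the Landweber ideal $I(m)\subset\laz$ admits a $K(p,m)$-anisotropic representative: the Milnor-type hypersurfaces realize $v_i$ for $1\leq i<m$ (since $v_i=0$ in $K(p,m)$ makes their $K(p,m)$-pushforward vanish), and any degree-$p$ field extension $\op{Spec}(L)/k$ realizes $v_0=p$ (such $L$ exists in characteristic zero and is $K(p,m)$-anisotropic because $p\equiv 0$ in $\ff_p$). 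Consequently $I(m)\cdot\Omega^*$ is anisotropic, so the quotient $\Omega^*/(K(p,m)-\text{anis})$ is an $\ff_p$-module on which primes $q\neq p$ act invertibly; since the full kernel differs from $I(m)\cdot\Omega^*$ only by clearing denominators coprime to $p$, the desired inclusion follows, and with it the injectivity.

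The principal obstacle is the $p$-localization step: the straightforward identification $\ker(\laz\to P(m)[\ov{x}]) = I(m)\cdot\laz_{(p)}\cap\laz$ involves fractions with denominators prime to $p$, so one must first establish anisotropy of $p$ itself (via the degree-$p$ extension available over any characteristic zero field, and in particular over flexible $k$) to turn the quotient into an $\ff_p$-module in which such denominators become units. The extension of Theorem \ref{Thm-iso-num} to $P(m)[\ov{x}]^*$ is a secondary technical point, cleanly resolved by the polynomial decomposition above.
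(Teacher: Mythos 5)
Your overall strategy coincides with the paper's: start from Proposition \ref{weight-str-MGL-p-m}, and upgrade the heart by showing that over a flexible field the quotient of $\Omega^*(X\times Y)$ by $K(p,m)$-anisotropic classes is exactly $P(m)[\ov{x}]^*_{Num}(X\times Y)$, the essential inputs being (a) anisotropy of the classes $v_0=p,v_1,\ldots,v_{m-1}\in\laz$, (b) the coincidence of $K(p,m)$- and $P(m)$-anisotropy from Proposition \ref{anis-Pm-Km-IX}, and (c) Theorem \ref{Thm-iso-num}. Your handling of the $p$-localisation (establish anisotropy of $p$ first, so the quotient is an $\ff_p$-module) and the extension of Theorem \ref{Thm-iso-num} to $P(m)[\ov{x}]^*$ by decomposing along the polynomial generators are both fine and match what the paper does implicitly.

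However, there is a genuine gap at the crucial step (a): your claim that Milnor-type hypersurfaces provide $K(p,m)$-anisotropic representatives of $v_i$ for $1\leq i<m$ is false. $K(p,m)$-anisotropy of $Q$ (Definition \ref{A-anis-var}) requires the \emph{entire} push-forward map $\pi_*:K(p,m)_*(Q)\row K(p,m)$ to vanish, not merely $\pi_*(1_Q)=[Q]$. Milnor hypersurfaces $H_{a,b}\subset\pp^a\times\pp^b$ are projective bundles over projective space, hence have rational points, so the push-forward of a point class is $1$ and $\pi_*$ is surjective; they are therefore $K(p,m)$-\emph{isotropic} regardless of their cobordism class. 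The paper instead uses a norm variety $Q_\alpha$ of Rost attached to a non-zero pure $m$-symbol $\alpha\in K^M_m(k)/p$ (such a symbol exists precisely because $k$ is flexible — this is where flexibility enters, and it is why the statement fails over, say, an algebraically closed field): $Q_\alpha$ is $K(p,m)$-anisotropic, its class is $v_{m-1}$, and the Landweber--Novikov invariance of $I(Q_\alpha)$ forces $I(Q_\alpha)\supseteq I(m)=(v_0,\ldots,v_{m-1})$, giving all the needed anisotropic representatives at once. (Your degree-$p$ extension argument for $v_0=p$ is correct over a flexible field, but note such extensions do not exist over every characteristic-zero field, so that too secretly uses flexibility.) Without replacing the Milnor hypersurfaces by norm varieties, the containment $\ker(\Omega^*\to P(m)[\ov{x}]^*)\subseteq(K(p,m)\text{-anisotropic classes})$ — and hence the injectivity in your argument — is unproven.
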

	
	\begin{proof}
		If $k$ is flexible, then there is a $K(p,m)$-anisotropic variety
		$Q$ whose class in the Lazard ring is $v_{m-1}$ (for example, one may use a norm-variety corresponding to some non-zero pure $m$-symbol modulo $p$). Then $I(Q)=\op{Image}(\Omega_*(Q)\stackrel{\pi_*}{\lrow}\laz)$ contains $v_i$, for $0\leq i<m$ (since this ideal
		is stable under Landweber-Novikov operations) - see 
		Example \ref{exa-norm-var}. Thus, classes
		$v_i\in\Omega^*(\op{Spec}(k))$, for $0\leq i<m$, are $K(p,m)$-anisotropic. In particular, $p\in\laz_0$ is $K(p,m)$-anisotropic.
		Since $\Omega^*\otimes_{\zz}\zz_{(p)}=BP^*[\ov{x}]$ and
		$BP^*/(v_0,\ldots,v_{m-1})=P(m)^*$, we get that, 
		for any smooth projective variety $X$,
		$$
		\Omega^*(X)/(K(p,m)\op{-}\text{anis. classes})=
		P(m)[\ov{x}]^*(X)/(K(p,m)\op{-}\text{anis. classes}).
		$$
		Since $K(p,m)$-anisotropic varieties are exactly $P(m)$-anisotropic ones, by Proposition \ref{anis-Pm-Km-IX}, the latter group can be rewritten as
		$$
		P(m)[\ov{x}]^*(X)/(P(m)\op{-}\text{anis. classes})=
		P(m)[\ov{x}]^*_{iso}(X).
		$$
		Thus, our heart is the category of isotropic $P(m)[\ov{x}]$-Chow
		motives. Finally, it follows from \cite[Theorem 5.1]{INCHKm}
		- see Theorem \ref{Thm-iso-num}
		that this category is equivalent to the category of numerical
		$P(m)[\ov{x}]$-Chow motives $Chow^{P(m)[\ov{x}]}_{Num}(k)$. 
		\Qed
	\end{proof}

	\section{Landweber-Novikov operations}
	\label{section-LN}
	
	Let $\MGL=B({\mathbbm{1}}^{MGL})$ be the $\MGL$-spectrum (see (\ref{one})), then for variables
	$b_i$ of degree $(i)[2i]$ and any monomial $\ov{b}^{\ov{r}}$ in them of degree $(s)[2s]$, we have the Landweber-Novikov map in $\SH(k)$:
	$$
	S_{L-N}^{\ov{b}^{\ov{r}}}: \MGL\lrow \Sigma^{2s,s}\MGL
	$$
	which due to the adjoint pair (\ref{one}) induces the 
	Landweber-Novikov operation
	$$
	S_{L-N}^{\ov{b}^{\ov{r}}}: MGL^{a,b}(U)\lrow MGL^{a+2s,b+s}(U),
	$$
	for any $U\in \SH(k)$ functorial on $U$. Taken together, these operations define the action of the Landweber-Novikov algebra
	$MU_*MU$ on $\SH(k)$.
	
	The natural functor $\SH(k)\lrow\widehat{\SH}_{(p,m)}(k/k)$ translates the above maps to maps  
	$$
	\widehat{S}_{L-N}^{\ov{b}^{\ov{r}}}: \widehat{\MGL}\lrow \Sigma^{2s,s}\widehat{\MGL}
	$$
	in $\widehat{\SH}_{(p,m)}(k/k)$ which induce the action
	$$
	\widehat{S}_{L-N}^{\ov{b}^{\ov{r}}}: \widehat{MGL}^{a,b}(U)\lrow \widehat{MGL}^{a+2s,b+s}(U)
	$$
	of the Landweber-Novikov algebra on $\widehat{\SH}_{(p,m)}(k/k)$.
	For compact $U$, these operations may be combined into a {\it Total} one
	$$
	\widehat{S}_{L-N}^{Tot}=\sum_{\ov{r}}\widehat{S}_{L-N}^{\ov{b}^{\ov{r}}}: \widehat{MGL}^{a,b}(U)\lrow \bigoplus_{s\in\zz}\widehat{MGL}^{a+2s,b+s}(U)
	$$
	which is multiplicative.
	
	If $V$ is compact, the group $\widehat{MGL}^{a,b}(U\otimes V^{\vee})$ may be identified with 
	$$
	\Hom_{\widehat{\MGL}_{(p,m)}}(\widehat{M}^{MGL}(U),\widehat{M}^{MGL}(V)(b)[a]). 
	$$
	In particular, for a compact $U$, we obtain the action 
	of the Landweber-Novikov algebra on
	$$
	\End(\widehat{M}^{MGL}(U))=\oplus_{i\in\zz}
	\Hom_{\widehat{\MGL}_{(p,m)}}(\widehat{M}^{MGL}(U),\widehat{M}^{MGL}(U)(i)[2i]), 
	$$
	where the action of the Total Landweber-Novikov operation $\widehat{S}_{L-N}^{Tot}$ is compartible with the composition of endomorphisms.
	
	\begin{definition}
		\label{J}
		For $U\in\widehat{\SH}_{(p,m)}^c(k)$, define $J(U)\subset\laz$ as
		$Ann(\widehat{M}^{MGL}(U))$ (cf. \cite[Definition 2.1]{Tor}).
	\end{definition}
	
	Since $\widehat{S}_{L-N}^{Tot}$ is multiplicative and fixes the identity map of $\widehat{M}^{MGL}(U)$
	in $\widehat{MGL}^{0,0}(U\otimes U^{\vee})$, it follows that $J(U)$ is 
	an invariant ideal of the Lazard ring. Hence, so is its radical
	$\sqrt{J(U)}$.
	
	\begin{proposition}
		\label{J-f-g}
		The ideal $\sqrt{J(U)}$ is finitely generated.
	\end{proposition}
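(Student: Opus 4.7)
My plan is to combine the weight structure on $\widehat{\MGL}^c_{(p,m)}$-mod from Proposition \ref{weight-str-MGL-p-m} with the Landweber-Novikov invariance of $J(U)$ and the classification of invariant ideals in $BP$ à la Landweber \cite{La73b}.

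Because $U$ is compact and the weight structure is bounded, $\widehat{M}^{MGL}(U)$ admits a finite weight tower whose graded pieces lie in the heart $Chow^{\Omega}_{K(p,m)-\mathrm{iso}}(k)$; each such piece is a direct summand of $\widehat{M}^{MGL}(X_i)(l_i)[2l_i]$ for finitely many smooth projective varieties $X_1,\ldots,X_r$. A standard triangulated-category argument handles distinguished triangles: if $\lambda\in\laz$ kills both $A$ and $C$ in a triangle $A\to B\to C\to A[1]$, then $\lambda\cdot\mathrm{id}_B$ factors through $B\to C$ (because $\lambda|_A=0$) and a further application of $\lambda$ factors through $A\to B$ (because $\lambda|_C=0$), so $\lambda^2\cdot\mathrm{id}_B=0$. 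Iterating this along the finite weight tower yields $\bigl(\bigcap_i J(X_i)\bigr)^{2^{r-1}}\subseteq J(U)$, and hence $\sqrt{J(U)}\supseteq \bigcap_i \sqrt{J(X_i)}$. It therefore suffices to establish finite generation for $\sqrt{J(X)}$ with $X$ smooth projective, and to control the intersection.

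For smooth projective $X$, Lemma \ref{weight-str-l-o} identifies $\End^{*}(\widehat{M}^{MGL}(X))$ with a quotient of the finitely generated $\laz$-module $\Omega^{*}(X\times X)$, so it is itself finitely generated over $\laz$. Consequently $\laz/J(X)$ embeds as a cyclic submodule of a finitely generated $\laz$-module and has only finitely many associated primes. Since $J(X)$ is invariant under Landweber-Novikov operations, each such prime is invariant; after $p$-localisation the invariant primes of $BP$ are exactly the totally ordered family $I(n)=(p,v_1,\ldots,v_{n-1})$ for $0\leq n\leq\infty$. The only non-finitely-generated member is $I(\infty)$, so finite generation of $\sqrt{J(X)}$ is equivalent to excluding $I(\infty)$ among the associated primes. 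If $\widehat{M}^{MGL}(X)\neq 0$ in $\widehat{\SH}_{(p,m)}(k/k)$, then $X$ is not $K(p,m)$-anisotropic, and Proposition \ref{anis-Pm-Km-IX} gives $\sqrt{I(X)}=I(r)$ with $r\geq m$ finite, so that $v_r$ acts non-nilpotently on $\widehat{M}^{MGL}(X)$; this precludes $I(\infty)$ and forces $\sqrt{J(X)}$ to equal a single finitely generated $I(n)$, totally ordered by inclusion. Taking the finite intersection as in the first paragraph, $\sqrt{J(U)}$ contains, and in fact equals (by a Landweber-Novikov-symmetry argument transferring annihilation across the weight filtration), the smallest such $I(n)$, which is finitely generated.

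The step I expect to be most delicate is the exclusion of $I(\infty)$ and the passage from the geometric invariant $I(X)$ (image of $\pi_{*}$ on $BP$-homology) to the motivic annihilator $J(X)$; the link is essentially provided by the Landweber-Novikov-invariance combined with Proposition \ref{anis-Pm-Km-IX}, but one must be careful to track non-nilpotent action of $v_r$ on $\widehat{M}^{MGL}(X)$ rather than merely non-vanishing of $v_r\cdot 1_X$ in $\Omega^{*}(X)/(K(p,m)\text{-anis.})$. This is precisely the content codified more sharply by Proposition \ref{sq-J-U-p-r} (which pins $\sqrt{J(U)}$ down to $I(p,r)$ for some $r\geq m$), and it is the heart of the finiteness statement.
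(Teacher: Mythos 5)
There is a genuine gap, and it lies exactly where your argument changes direction. The weight-tower/triangle argument gives you the inclusion $\bigcap_i\sqrt{J(X_i)}\subseteq\sqrt{J(U)}$, i.e.\ a \emph{lower} bound on $\sqrt{J(U)}$. But over the non-Noetherian ring $\laz$, an ideal containing a finitely generated ideal need not be finitely generated (e.g.\ $(p,v_1,v_2,\ldots)\supset(p)$ in $BP$), so this inclusion proves nothing about finite generation of $\sqrt{J(U)}$ itself. Your attempt to upgrade the inclusion to an equality with ``the smallest $I(n)$'' is not only unproved (``a Landweber--Novikov-symmetry argument transferring annihilation across the weight filtration'' is not an argument) but false in general: if $\widehat{M}^{MGL}(U)$ is the cone of the identity of some $\widehat{M}^{MGL}(X)$ then $J(U)=\laz$ while the graded pieces have much smaller annihilators. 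Moreover, appealing to Landweber's classification to exclude $I(\infty)$ for general $U$ is circular: that classification (radical of an invariant ideal is a finite intersection of the primes $I(n)$, $n<\infty$) is available precisely for \emph{finitely generated} invariant ideals, resp.\ finitely presented comodules, which is what you are trying to prove. A smaller but related issue: ``a finitely generated $\laz$-module has only finitely many associated primes'' also fails without Noetherianness; one needs finite presentation together with the comodule structure (Landweber's Lemma 3.3).

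The paper's proof avoids all of this by bounding $\sqrt{J(U)}$ from \emph{above} as well: it takes the weight complex $t(M)$ of $M=\widehat{M}^{MGL}(U)$, observes that $\End_{K^b({\cal H})}(t(M))$ is the cohomology of a finite complex of finitely presented $\laz$-modules, all extended from a Noetherian subring $\laz_{\leq r}=\zz[x_1,\ldots,x_r]$, hence is itself finitely presented over $\laz$; therefore $Ann(t(M))$ and its radical are finitely generated. The transfer back to $M$ is then done by Bondarko's theorem $\sqrt{Ann(M)}=\sqrt{Ann(t(M))}$ --- an equality, not a one-sided inclusion. That equality is the ingredient your proposal is missing; the identification of the radical with an invariant prime $I(p,r)$, $r\geq m$, is deliberately deferred to Proposition \ref{sq-J-U-p-r}, \emph{after} finite generation is in hand.
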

	
	\begin{proof}
		Let $M$ be an arbitrary object of $\widehat{\MGL}^c_{(p,m)}\op{-}mod$ and 
		$t(M)$ be any choice of the weight complex
		of $M$ with respect to the weight structure
		from Proposition \ref{weight-str-M}. 
		It is a finite complex 
		$$
		\ldots\lrow X_{i+1}\stackrel{d_i}{\lrow}X_i\lrow\ldots,
		$$
		where $X_i$s are numerical $P(m)[\ov{x}]$-Chow motives.
		The endomorphism ring
		$$
		\End_{K^b({\mathcal H})}(t(M))=
		\oplus_{l\in\zz}\Hom_{K^b({\mathcal H})}(t(M), t(M)(l)[2l])
		$$ 
		is the cohomology of the complex $(E,d)=E_{1}\row E_0\row E_{-1}$,
		where $E_j=\oplus_i\HHom_{Chow^{P(m)[\ov{x}]}_{Num}(k)}(X_i,X_{i+j})$
		(the sum is finite), where
		$$
		\HHom(A,B)=\oplus_{l\in\zz}\Hom(A,B(l)[2l]),
		$$
		and differentials are induced by $d_i$s. 
		Since the numerical pairing is defined on the level of topological realisation, 
		$E_j$s here are sub-quotients of the respective topological cohomology generated in (co-dimensional) degrees bounded from below (as $\Omega^*$ as a $\laz$-module is generated in non-negative co-dimension, by the result of Levine-Morel \cite{LM}), and so, are finitely generated $\laz$-modules. Numerical pairing imposes finitely many linear relations on the generators of these modules, whose coefficients involve only finitely many of the polynomial generators of the Lazard
		ring. Since the ring $\laz_{\leq r}=\zz[x_1,\ldots,x_r]$ is Noetherian, the modules $E_j$ are finitely presented. The same applies to the differentials of our complex and so,
		$(E,d)=(\hat{E},\hat{d})\otimes_{\laz_{\leq r}}\laz$, for some $r$.
		Since the ring $\laz_{\leq r}$ is Noetherian,
		$\End_{K^b({\mathcal H})}(t(M))$ is a finitely
		presented $\laz$-module. Hence, $Ann(t(M))$
		is a finitely generated ideal of $\laz$. 
		Then, so is the radical $\sqrt{Ann(t(M))}$ of it.
		But by \cite[Theorem 3.3.1.II]{Bon},
		$\sqrt{Ann(M)}=\sqrt{Ann(t(M))}$. Thus, $\sqrt{Ann(M)}$ is finitely generated for any compact object $M$ of 
		$\widehat{\MGL}_{(p,m)}\op{-}mod$. It remains to apply it to
		$M=\widehat{M}^{MGL}(U)$.
                \Qed
	\end{proof}
	
	Thus, $\sqrt{J(U)}$ is a finitely generated radical invariant
	ideal of $\laz$. Since, over a flexible field, the classes $v_i,\,0\leq i<m$ are $K(p,m)$-anisotropic, $J(U)$ contains $I(p,m)=(v_0,\ldots,v_{m-1})$.
	From the results of Landweber \cite[Theorem 2.7, Proposition 3.4]{La73b} we obtain that our radical is an invariant prime ideal
	of Landweber.
	
	\begin{proposition}
		\label{sq-J-U-p-r}
		Let $k$ be flexible. Then, for any compact $U\in\widehat{\SH}_{(p,m)}(k/k)$, we have:
		$\sqrt{J(U)}=I(p,r)$, for some $r\geq m$.
	\end{proposition}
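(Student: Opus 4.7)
The plan is to invoke Landweber's classification of invariant primes of $BP_*$ and combine it with the three facts already assembled in the preceding discussion: $\sqrt{J(U)}$ is Landweber-Novikov invariant, finitely generated, and contains $I(p,m)$.

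The first two are immediate. Invariance of $J(U)$ follows from multiplicativity of $\widehat{S}_{L-N}^{Tot}$ together with the fact that it fixes the identity of $\widehat{M}^{MGL}(U)$, and passing to the radical preserves invariance. Finite generation is Proposition \ref{J-f-g}. For the containment $I(p,m) \subseteq \sqrt{J(U)}$, I would observe that over a flexible field each class $v_i$ with $0\leq i<m$ is $K(p,m)$-anisotropic: up to a $\zz_{(p)}$-unit, $v_i$ is the pushforward to $\op{Spec}(k)$ of the fundamental class of a suitable $K(p,m)$-anisotropic norm-variety (one attached to a nonzero pure symbol of weight $i+1$ modulo $p$, as in Example \ref{exa-norm-var}). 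Since $\widehat{M}^{MGL}(U)$ lies in $\widehat{\MGL}_{(p,m)}-mod=\Upsilon^{MGL}_{(p,m)}\otimes\MGL-mod$, and smash products of $\Upsilon_{(p,m)}$ with $K(p,m)$-anisotropic varieties vanish by the very construction of $\widehat{\SH}_{(p,m)}(k/k)$, the action of $v_i$ on $\widehat{M}^{MGL}(U)$ is zero, so $v_i \in J(U)$.

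Next I would apply \cite[Theorem 2.7, Proposition 3.4]{La73b}. Theorem 2.7 identifies the invariant prime ideals of $BP_*$ as the linear chain
\[
(0) \subset I(p,1) \subset I(p,2) \subset \cdots \subset I(p,\infty),
\]
and Proposition 3.4, together with this classification, implies that a finitely generated invariant radical ideal of $BP_*$ containing $p$ must coincide with one of these primes. Since $p\in\sqrt{J(U)}$, it suffices to work in $\laz\otimes\zz_{(p)}=BP_*[\bar{x}]$, where the above gives $\sqrt{J(U)}=I(p,r)$ for a unique $1\leq r\leq\infty$; the containment $I(p,m)\subseteq I(p,r)$ within the chain then forces $r\geq m$, completing the argument.

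The step I expect to require the most care is ensuring that $\sqrt{J(U)}$, viewed in $\laz\otimes\zz_{(p)}=BP_*[\bar{x}]$, does not acquire contributions from the polynomial $\bar{x}$-generators beyond those controlled by Landweber's $BP_*$-classification. This ultimately follows from the invariance of $\sqrt{J(U)}$ under the Landweber-Novikov (co)action combined with the explicit behaviour of the operations on the $\bar{x}$-variables, but it is the one place where the argument must pass cleanly between $\laz$ and its $BP$-localisation rather than simply quoting \cite{La73b}.
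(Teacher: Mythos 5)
Your argument is correct and follows the same route as the paper: invariance of $\sqrt{J(U)}$ under the Landweber--Novikov action, finite generation via Proposition \ref{J-f-g}, the containment $I(p,m)\subseteq J(U)$ from the $K(p,m)$-anisotropy of the classes $v_i$, $0\leq i<m$, over a flexible field (via norm-varieties), and then Landweber's classification \cite[Theorem 2.7, Proposition 3.4]{La73b} to conclude that the radical is an invariant prime $I(p,r)$ with $r\geq m$. This matches the paper's proof essentially step for step.
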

	
	\section{Numerical Morava motives}
	\label{section-nMm}
	
	We have the natural ring homomorphism $P(m)[\ov{x}]\row K(p,m)$
	of coefficient rings ($v_m$ is inverted, while $v_j$, for $j>m$
	and all $x_i$s are mapped to zero). It gives the morphism of
	{\it free} theories $P(m)[\ov{x}]^*\row K(p,m)^*$ which, in turn,
	induces the $\otimes-\triangle$-functor 
	$$
	K^b(Chow^{P(m)[\ov{x}]}_{Num}(k))\lrow
	K^b(Chow^{K(p,m)}_{Num}(k))
	$$
	of homotopy categories of the respective numerical Chow motives.
	
	Since the coefficient ring $K(p,m)=\ff_p[v_m,v_m^{-1}]$ is a {\it graded field}, the numerical Chow category
	$Chow^{K(p,m)}_{Num}(k)$ is semi-simple.
	
	\begin{proposition}
		\label{semi-simpl-Kpm}
		Any map $f:X\row Y$ in $Chow^{K(p,m)}_{Num}(k)$ can be written
		as $id_W\oplus 0$, where $X=W\oplus X'$ and $Y=W\oplus Y'$.
	\end{proposition}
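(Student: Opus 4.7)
The plan is to adapt the classical Jannsen-style argument for the semi-simplicity of numerical motives to the graded setting over the graded field $K(p,m) = \ff_p[v_m, v_m^{-1}]$. First I would establish that for every object $Z$ of $Chow^{K(p,m)}_{Num}(k)$, the endomorphism algebra $E := \op{End}(Z)$ is a semi-simple (graded) $K(p,m)$-algebra. Rigid duality of the Chow motivic category (coming from smooth projectivity) identifies $E$ with a numerical cohomology group carrying a non-degenerate $K(p,m)$-valued pairing $\langle \alpha, \beta \rangle := \op{tr}(\alpha \circ \beta)$ — this is precisely the pairing whose left kernel is divided out in the numerical construction. Finite-generation of $E$ over $K(p,m)$ (in the graded sense) then follows from non-degeneracy together with finiteness of $K(p,m)_{Num}^*$ already used in the proof of Proposition \ref{weight-str-M}. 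Now if $\alpha \in E$ lies in any nil ideal, $\alpha \beta$ is nilpotent for all $\beta \in E$, so $\op{tr}(\alpha \beta) = 0$ (the trace of a nilpotent endomorphism of a free module over any field, including the graded field $K(p,m)$, vanishes); non-degeneracy forces $\alpha = 0$. Thus $E$ has trivial Jacobson radical and is semi-simple.

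Second, combining semi-simplicity of all endomorphism algebras with the pseudo-abelianness built into $Chow^{K(p,m)}_{Num}(k)$ by construction yields a semi-simple abelian category. Krull--Schmidt then gives decompositions $X = \bigoplus_i S_i^{a_i}$ and $Y = \bigoplus_i S_i^{b_i}$ into mutually non-isomorphic simples $S_i$, with $D_i := \op{End}(S_i)$ a graded division $K(p,m)$-algebra. Schur's lemma makes $f$ block-diagonal, $f = \bigoplus_i f_i$ with $f_i : S_i^{a_i} \to S_i^{b_i}$ given by a matrix over $D_i$.

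For the final step, note that over a graded division ring $D_i$ any matrix can be reduced, via invertible left and right multiplication (equivalently, by automorphisms of the source $S_i^{a_i}$ and target $S_i^{b_i}$), to a normal form with a block $I_{r_i}$ and zeros elsewhere. Setting $W := \bigoplus_i S_i^{r_i}$ and transporting these reductions, I obtain $X = W \oplus X'$ and $Y = W \oplus Y'$ such that $f$ is expressed as $id_W \oplus 0$, as required. The main obstacle I anticipate is confirming that the classical Jannsen--Scholl arguments — non-degeneracy of the trace pairing forcing vanishing of the radical, together with Krull--Schmidt and Schur's lemma — transfer verbatim to the graded setting over $K(p,m)$; this is essentially bookkeeping since graded modules over a graded field are free and the trace of a graded nilpotent endomorphism still vanishes, but one must consistently track internal degrees (in particular Tate twists on the motivic side) throughout the matrix-reduction step.
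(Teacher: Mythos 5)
Your overall strategy --- Jannsen-style semisimplicity of the endomorphism algebras, followed by Krull--Schmidt, Schur's lemma and matrix reduction over graded division rings --- is a legitimate alternative in outline to the paper's argument, which instead manufactures idempotents directly as powers of $\alpha=g\circ f$ and splits off the summand $W$ inductively. However, there is a genuine gap at the pivotal step: the claim that $\op{tr}(\alpha\beta)=0$ whenever $\alpha\beta$ is nilpotent. The trace occurring in the numerical pairing is the categorical, correspondence-theoretic trace $\pi_*(\alpha\cdot{}^{t}\beta)$ on $K(p,m)(X\times X)$, and it is \emph{not} formally the trace of a linear endomorphism of a free graded $K(p,m)$-module: the small motivic theory $K(p,m)^*$ does not satisfy the Kunneth formula, so $K(p,m)(X\times X)$ is not $\op{End}_{K(p,m)}(K(p,m)(X))$, and in a general rigid tensor category nilpotent endomorphisms need not have vanishing trace. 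In the classical Jannsen argument this is exactly where a Weil cohomology and the Lefschetz trace formula enter, and your parenthetical justification (``trace of a nilpotent endomorphism of a free module'') silently assumes that ingredient. To close the gap you need precisely what the paper's proof supplies: lift correspondences to the topological Morava K-theory $K(p,m)_{Top}$, which does satisfy Kunneth, so that $K(p,m)_{Top}(X\times X)\cong\op{End}_{K(p,m)}(K(p,m)_{Top}(X))$ and the degree pairing (hence all traces) is computed there; only then does the linear-algebra fact about nilpotents apply.

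Two further points should be made explicit. First, to conclude that the Jacobson radical is nil and that ``trivial radical implies semisimple'' you need $\op{End}(Z)$ to be Artinian in the graded sense; this holds because the numerical groups are finitely generated over the graded field $K(p,m)$ (the paper uses the stronger fact that the degree-zero endomorphism set is actually finite, which also powers its induction), but it does not ``follow from non-degeneracy'' as you suggest. Second, note that even over a field of characteristic zero the trace form of a semisimple algebra can be degenerate for inseparable reasons in characteristic $p$; your argument avoids this because non-degeneracy of the pairing on $\op{End}_{Num}(Z)$ is built into the definition of the numerical category, so you only need the one implication (radical $\subset$ kernel of the pairing) --- but that implication is exactly the step that requires the topological realisation. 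Once that is in place, the remaining bookkeeping (Krull--Schmidt, block-diagonalisation, normal form of a matrix over a graded division ring) is sound and yields the stated decomposition $f=id_W\oplus 0$.
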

	
	\begin{proof}
	    The arguments are identical to the case of $K(p,\infty)^*=\Ch^*$ treated in \cite[Lemma 5.9]{INCHKm}. The only facts about $K(p,m)^*$ we use are: 1) the degree pairing there is defined on the level of topological realisation, 2) the topological Morava K-theory satisfies the Kunneth formula, and 3) the coefficient ring $K(p,m)$ of our theory is a graded field whose zero-degree component is finite. These replace the respective facts about $\Ch^*$ used in loc. cit..
		\Qed
	\end{proof}
	
	Proposition \ref{semi-simpl-Kpm} shows that the
	category $Chow^{K(p,m)}_{Num}(k)$ is semi-simple abelian. This immediately implies (see \cite[Proposition 5.10]{INCHKm}):
	
	\begin{proposition}
		\label{zero-dif}
		Any object of $K^b(Chow^{K(p,m)}_{Num}(k))$ can be presented by a complex with zero differentials. Such a presentation is unique (up to a canonical isomorphism).
	\end{proposition}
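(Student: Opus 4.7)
My plan is to use Proposition \ref{semi-simpl-Kpm} to iteratively strip off acyclic two-term summands from a bounded complex until all differentials vanish, and then to observe that between zero-differential complexes the homotopy category morphisms coincide with chain maps, so that any two such presentations are canonically identified.

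For existence, I would start with an arbitrary bounded complex $(X_\bullet, d_\bullet)$ in $Chow^{K(p,m)}_{Num}(k)$. If some $d_i: X_{i+1}\to X_i$ is non-zero, Proposition \ref{semi-simpl-Kpm} lets me decompose $X_{i+1}=W\oplus X'_{i+1}$ and $X_i=W\oplus X'_i$ so that $d_i=\op{id}_W\oplus 0$ with $W\neq 0$. Writing $d_{i+1}$ and $d_{i-1}$ in block form relative to these splittings, the relations $d_i\circ d_{i+1}=0$ and $d_{i-1}\circ d_i=0$ force the $W$-component of $d_{i+1}$ to vanish and $d_{i-1}$ to kill the $W$-summand of $X_i$. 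Hence the subcomplex
$$
K_\bullet:\quad \ldots\to 0\to W\xrightarrow{\op{id}_W} W\to 0\to\ldots
$$
placed in degrees $i+1,i$ splits off $X_\bullet$ as a direct summand of complexes and is visibly null-homotopic. Replacing $X_\bullet$ by its complement yields a homotopy-equivalent complex of strictly smaller total length, and iterating finitely often produces a representative with zero differentials.

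For uniqueness, suppose $X_\bullet$ and $Y_\bullet$ both present the same object $M$ and have zero differentials. Any chain map $f:X_\bullet\to Y_\bullet$ that is null-homotopic via $\{h_i\}$ satisfies $f_i=d_Y h_i+h_{i-1}d_X$, which equals $0$; hence null-homotopic chain maps between such complexes vanish, and
$$
\Hom_{K^b(\mathcal H)}(X_\bullet, Y_\bullet)=\bigoplus_i \Hom_{\mathcal H}(X_i, Y_i).
$$
An isomorphism in $K^b$ is then nothing more than a system of componentwise isomorphisms $X_i\xrightarrow{\sim}Y_i$, and the one produced by lifting $\op{id}_M$ is the claimed canonical isomorphism.

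The main obstacle, in my estimation, is the termination of the iteration: one must be certain that the ``total rank'' of a bounded complex in $Chow^{K(p,m)}_{Num}(k)$ is a finite quantity that strictly decreases at each reduction step. This rests on the fact that every object of the semi-simple category $Chow^{K(p,m)}_{Num}(k)$ has finite length, which in turn follows from the finite generation of the numerical $K(p,m)$-Chow groups of products of smooth projective varieties over the graded field $K(p,m)=\ff_p[v_m,v_m^{-1}]$ (essentially the same finiteness input already used in the proof of Proposition \ref{semi-simpl-Kpm}). Once this finiteness is in hand, the rest is formal.
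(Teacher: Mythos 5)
Your proof is correct and follows essentially the same route as the paper: both strip off the null-homotopic two-term summands $W\xrightarrow{\mathrm{id}}W$ produced by Proposition \ref{semi-simpl-Kpm}, and for uniqueness both use that a null-homotopic chain map between zero-differential complexes must vanish. The only remark worth making is that your termination worry is unnecessary: since $d_i=\mathrm{id}_W\oplus 0$, the induced differential on the complementary complex is zero, so each application kills one whole differential and the bounded complex is exhausted after finitely many steps, with no finite-length input required.
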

	
	The graded pieces of the canonical representative from Proposition \ref{zero-dif} provide cohomology functors
	$$
	H^i: K^b(Chow^{K(p,m)}_{Num}(k))\row Chow^{K(p,m)}_{Num}(k).
	$$
	
	If ${\mathcal D}$ is a triangulated category with the weight structure 
	$\omega$ with the heart ${\mathcal H}$, then the {\it weight complex}
	$t(X)\in K({\mathcal H})$, for an object $X\in{\mathcal D}$, is not uniquely defined, in general. But Bondarko has shown \cite[Section 3]{Bon} that it is well defined in some quotient-category.
	Namely, for $U,V\in K({\mathcal H})$ we may consider the subgroup
	of morphisms $Z(U,V)\subset\Hom_{K({\mathcal H})}(U,V)$ that can be
	presented as $(d^V_{i-1}\circ t_i)$, for some $t_i:U_i\row V_{i-1}$. Then $Z$ is a two-sided ideal and one may define the quotient
	additive category $K_{{\frak w}}({\mathcal H})=K({\mathcal H})/Z$ - see \cite[Definition 3.1.6]{Bon} (note though, that this category is usually not triangulated). Then the assignment $X\mapsto t(X)$ gives an additive functor ${\mathcal D}\row K_{{\frak w}}({\mathcal H})$ - see \cite[Theorem 3.2.2(II)]{Bon}. 
	
	In particular, we have this {\it weak weight complex functor}
	in our situation:
	$$
	t_{{\frak w}}:\widehat{\MGL}^c_{(p,m)}\op{-}mod\lrow K^b_{{\frak w}}(Chow^{P(m)[\ov{x}]}_{Num}(k)).
	$$
	It is, actually, defined for any field $k$, as we may always
	project from $P(m)[\ov{x}]_{iso}$ to $P(m)[\ov{x}]_{Num}$.
	We may combine it with the natural functor
	$$
	K^b_{{\frak w}}(Chow^{P(m)[\ov{x}]}_{Num}(k))\lrow
	K^b_{{\frak w}}(Chow^{K(p,m)}_{Num}(k)).
	$$
	But it follows from Proposition \ref{zero-dif} that the natural
	projection 
	$$
	K^b(Chow^{K(p,m)}_{Num}(k))\row K^b_{{\frak w}}(Chow^{K(p,m)}_{Num}(k))
	$$ 
	is an equivalence of categories. Thus, we get the 
	{\it Morava weight complex functor}
	$$
	t_{K(p,m)}: \widehat{\MGL}^c_{(p,m)}\op{-}mod\lrow K^b(Chow^{K(p,m)}_{Num}(k)).
	$$
	
	Since the category $\MGL\op{-}mod$ has an $\infty$-categorical (stable) enhancement, by the result of Sosnilo \cite[Corollary 3.5]{Sos}, we
	have the {\it strong weight complex functor}
	$$
	t:\MGL^c\op{-}mod\row K^b(Chow^{\Omega}(k)),
	$$
	which restricts to the weak one when projected to $K^b_{{\frak w}}$.
	Since the category $\widehat{\MGL
	}^c_{(p,m)}\op{-}mod$ is obtained from
	$\MGL\op{-}mod$ by a $\otimes$-projector, which respects the weight structure, we get that $t$ agrees with $t_{K(p,m)}$ and so, the
	latter functor is triangulated.
	
	\begin{definition}
		\label{Morava-weight-coh} 
		Define the Morava weight cohomology 
		$$
		H^i_{K(p,m)}:\widehat{\MGL}^c_{(p,m)}\op{-}mod\lrow Chow^{K(p,m)}_{Num}(k)
		$$
		as the composition $H^i\circ t_{K(p,m)}$, for $i\in\zz$. 
	\end{definition}
	
	\begin{remark}
		Combining with the projection 
		$\MGL^c\op{-}mod\row\widehat{\MGL}^c_{(p,m)}\op{-}mod$, we get the ``global'' version of the Morava weight cohomology:
		$$
		H^i_{K(p,m)}:\MGL^c\op{-}mod\lrow Chow^{K(p,m)}_{Num}(k).
		$$
		These are cohomological functors on the triangulated category of compact $\MGL$-modules.
		\Red
	\end{remark}
	
	By the result of Aoki \cite[Corollary 4.5]{Aok}, the strong weight
	complex functor $t$ respects $\otimes$. Hence, the {\it Morava weight complex functor} 
	$t_{K(p,m)}:\widehat{\MGL}^c_{(p,m)}\op{-}mod\lrow K^b(Chow^{K(p,m)}_{Num}(k))$ is tensor triangulated. Thus, we get the Kunneth formula for {\it Morava weight cohomology}:
	
	\begin{proposition}
		\label{Kunneth-Morava} {\rm (Kunneth formula)}
		The Morava weight cohomology satisfy:
		$$
		H^k_{K(p,m)}(X\otimes Y)=\bigoplus\limits_{i+j=k}H^i_{K(p,m)}(X)\otimes H^j_{K(p,m)}(Y),
		$$
		for any compact $\MGL$-modules $X$ and $Y$.
	\end{proposition}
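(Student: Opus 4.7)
The plan is to reduce the statement to an elementary computation inside $K^b(Chow^{K(p,m)}_{Num}(k))$, using three ingredients that are already at our disposal: (i) the fact, just invoked via Aoki's result, that the Morava weight complex functor $t_{K(p,m)}$ is tensor triangulated; (ii) the semi-simplicity of $Chow^{K(p,m)}_{Num}(k)$ established in Proposition \ref{semi-simpl-Kpm}; and (iii) the canonical zero-differential presentation of bounded complexes over this heart, obtained in Proposition \ref{zero-dif}, together with the fact that $H^i_{K(p,m)}$ is defined as the $i$-th graded piece of this canonical presentation.

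First, by Aoki's result there is a natural isomorphism
$$
t_{K(p,m)}(X\otimes Y)\;\cong\;t_{K(p,m)}(X)\otimes t_{K(p,m)}(Y)
$$
in $K^b(Chow^{K(p,m)}_{Num}(k))$, where the tensor on the right is the usual tensor of bounded complexes over an additive tensor category. Next, I would invoke Proposition \ref{zero-dif} to replace $t_{K(p,m)}(X)$ and $t_{K(p,m)}(Y)$ by their (unique up to canonical isomorphism) zero-differential representatives
$$
t_{K(p,m)}(X)\;\cong\;\bigoplus_i H^i_{K(p,m)}(X)[-i],\qquad
t_{K(p,m)}(Y)\;\cong\;\bigoplus_j H^j_{K(p,m)}(Y)[-j].
$$

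The key (and essentially only computational) step is the observation that the tensor product of two complexes with vanishing differential again has vanishing differential: the standard formula $d_{C\otimes D}=d_C\otimes 1+(-1)^{|\cdot|}\otimes d_D$ is identically zero in this case. Hence the tensor product of the two representatives above is \emph{itself} a zero-differential complex, namely
$$
\bigoplus_k\Bigl(\,\bigoplus_{i+j=k} H^i_{K(p,m)}(X)\otimes H^j_{K(p,m)}(Y)\,\Bigr)[-k].
$$
By the uniqueness part of Proposition \ref{zero-dif}, this is the canonical zero-differential presentation of $t_{K(p,m)}(X\otimes Y)$. Applying $H^k$, i.e.\ extracting the degree-$k$ graded piece, yields the desired formula.

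I do not anticipate any genuine obstacle: the hard work has been done upstream in proving that $t_{K(p,m)}$ is a tensor triangulated functor (via the lifting of $t$ to an $\infty$-categorical setting together with Aoki's compatibility of $t$ with $\otimes$) and in establishing semi-simplicity of the Morava-numerical Chow category. The only point that requires minor care is to ensure that the tensor-triangulated isomorphism $t_{K(p,m)}(X\otimes Y)\cong t_{K(p,m)}(X)\otimes t_{K(p,m)}(Y)$ in $K^b$ matches, after passing to the canonical zero-differential form, the explicit bi-graded decomposition above; this is automatic since $H^k$ is well-defined on isomorphism classes in $K^b(Chow^{K(p,m)}_{Num}(k))$.
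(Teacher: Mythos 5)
Your proposal is correct and follows the same route as the paper: the paper derives the Künneth formula directly from Aoki's result that the strong weight complex functor respects $\otimes$ (hence $t_{K(p,m)}$ is tensor triangulated) combined with the zero-differential presentation of Proposition \ref{zero-dif}. You merely spell out the final computational step (tensoring two zero-differential complexes and invoking uniqueness of the presentation) that the paper leaves implicit.
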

	
	Let $A^*$ be an oriented cohomology theory whose coefficient ring $A$ is an integral domain. Then $\otimes$ has no zero-divisors in
	$Chow^A_{Num}(k)$ on objects and morphisms - see \cite[Proposition 5.2]{INCHKm}. In particular, we get:
	
	\begin{proposition}
		\label{Kpm-Num-zd}
		The zero ideal $(0)$ of $K^b(Chow^{K(p,m)}_{Num}(k))$ is prime triangulated.
	\end{proposition}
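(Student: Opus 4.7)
The plan is to exploit the canonical zero-differential representative from Proposition \ref{zero-dif} to reduce primality of the zero ideal in the homotopy category to the no-zero-divisor property of $\otimes$ in the heart, which is exactly what the cited \cite[Proposition 5.2]{INCHKm} gives (noting that $K(p,m)=\ff_p[v_m,v_m^{-1}]$ is an integral domain).

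First I would observe that the zero ideal is automatically a thick $\otimes$-triangulated ideal, so the only content is primality: we must check that $X\otimes Y=0$ in $K^b(Chow^{K(p,m)}_{Num}(k))$ forces $X=0$ or $Y=0$. By Proposition \ref{zero-dif}, I would choose the canonical zero-differential representatives $X\cong\bigoplus_i X^i[-i]$ and $Y\cong\bigoplus_j Y^j[-j]$ with $X^i,Y^j\in Chow^{K(p,m)}_{Num}(k)$. Since $\otimes$ is bi-additive and these complexes have zero differentials, the tensor product also has zero differentials and decomposes as
\[
X\otimes Y\;\cong\;\bigoplus_{k}\Bigl(\bigoplus_{i+j=k}X^i\otimes Y^j\Bigr)[-k].
\]
By the uniqueness clause of Proposition \ref{zero-dif}, the vanishing of $X\otimes Y$ in $K^b(Chow^{K(p,m)}_{Num}(k))$ is equivalent to the vanishing of each graded piece $\bigoplus_{i+j=k}X^i\otimes Y^j$ in the heart $Chow^{K(p,m)}_{Num}(k)$.

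Next, suppose for contradiction that both $X\neq 0$ and $Y\neq 0$. Then there exist indices $i_0,j_0$ with $X^{i_0}\neq 0$ and $Y^{j_0}\neq 0$ in $Chow^{K(p,m)}_{Num}(k)$. The object $X^{i_0}\otimes Y^{j_0}$ is a direct summand of the $(i_0+j_0)$-th graded piece of $X\otimes Y$, and since any direct summand of a zero object in an additive category is zero, it follows that $X^{i_0}\otimes Y^{j_0}=0$. Applying the no-zero-divisor statement for $\otimes$ on objects in $Chow^A_{Num}(k)$ with $A=K(p,m)$ an integral domain (the remark preceding the statement, citing \cite[Proposition 5.2]{INCHKm}), this forces $X^{i_0}=0$ or $Y^{j_0}=0$, a contradiction.

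I do not anticipate a serious obstacle: the whole argument rests on two inputs already established, namely the existence and uniqueness of the zero-differential representative (Proposition \ref{zero-dif}) and the integrality of tensor product in the numerical category over an integral-domain coefficient ring. The only small point to be careful about is to invoke uniqueness of the canonical representative correctly so that vanishing in $K^b$ really does translate to heart-level vanishing degree by degree; this is precisely what Proposition \ref{zero-dif} supplies.
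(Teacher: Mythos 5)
Your proof is correct and follows essentially the same route as the paper: decompose both objects into zero-differential complexes via Proposition \ref{zero-dif}, reduce to the heart degree by degree, and invoke the absence of $\otimes$-zero-divisors in $Chow^{K(p,m)}_{Num}(k)$ coming from \cite[Proposition 5.2]{INCHKm} and the integrality of $K(p,m)$. The paper's version is terser, but the argument is the same.
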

	
	\begin{proof}
		Indeed, by Proposition \ref{zero-dif}, any object $U$
		of $K^b(Chow^{K(p,m)}_{Num}(k))$ is isomorphic to the direct sum
		of its (shifted) weight cohomology $H^i(U)[i]\in Chow^{K(p,m)}_{Num}(k)[i]$.
		But $K(p,m)=\ff_p[v_m,v_m^{-1}]$ is an integral domain, thus
		$Chow^{K(p,m)}_{Num}(k)$ has no $\otimes$-zero-divisors. Hence,
		so does $K^b(Chow^{K(p,m)}_{Num}(k))$.\\
		\phantom{a}\hspace{5mm}\Qed
	\end{proof}
	
	\section{The Main theorem}
	\label{section-MT}
	
	Denote as $U\mapsto\ov{U}$ the natural 
	$\otimes\op{-}\triangle$-ed functor
	$$
	K^b(Chow^{P(m)[\ov{x}]}_{Num}(k))\lrow
	K^b(Chow^{K(p,m)}_{Num}(k)).
	$$
	Let ${\frak b}$ be the kernel of it. It is a $\otimes$-triangulated ideal in $K^b(Chow^{P(m)[\ov{x}]}_{Num}(k))$.
	
	\begin{proposition}
		\label{Ann-b}
		Let $U\in{\frak b}$. Then $Ann(U)\subset P(m)[\ov{x}]$ is
		non-zero.
	\end{proposition}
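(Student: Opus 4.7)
The plan is to exploit the fact that, over a flexible field, the functor $U \mapsto \ov{U}$ is induced by base change on Hom groups along $P(m)[\ov{x}] \to K(p,m)$. Combining Theorem \ref{Thm-iso-num} (numerical $=$ isotropic for both $P(m)[\ov{x}]^*$ and $K(p,m)^*$) with Proposition \ref{anis-Pm-Km-IX} ($K(p,m)$-anisotropic varieties coincide with $P(m)$-anisotropic ones), and observing that $K(p,m)^*(X \times Y) = P(m)[\ov{x}]^*(X \times Y) \otimes_{P(m)[\ov{x}]} K(p,m)$ for free theories, one checks the identification
\[
\Hom_{Chow^{K(p,m)}_{Num}}(X,Y) = \Hom_{Chow^{P(m)[\ov{x}]}_{Num}}(X,Y)\otimes_{P(m)[\ov{x}]} K(p,m).
\]
Hence at the cochain level $\End^\bullet(\ov{U}) = \End^\bullet(U) \otimes_{P(m)[\ov{x}]} K(p,m)$, where $E^\bullet := \End^\bullet(U)$ is a bounded complex of $P(m)[\ov{x}]$-modules with $R = H^0(E^\bullet) = \End_{K^b}(U)$.

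Since $\ov{U} = 0$, the identity cocycle $id_{\ov{U}}$ is a coboundary, so there exists $h\in \End^{-1}(\ov{U})$ with $d(h) = id_{\ov{U}}$. Writing $\mathfrak{n} = (v_{m+1},v_{m+2},\ldots,x_1,x_2,\ldots) \subset P(m)[\ov{x}]$ for the kernel ideal, we have $K(p,m) = (P(m)[\ov{x}]/\mathfrak{n})[v_m^{-1}]$, so every element of the base change has the form $v_m^{-N}\cdot\ov{\tilde h}$ for some lift $\tilde h \in \End^{-1}(U)$ and a uniform $N\geq 0$. Substituting, the cocycle $\xi := v_m^N\cdot id_U - d(\tilde h) \in E^0$ lies in the kernel of $E^0 \to E^0\otimes K(p,m)$, i.e., $v_m^{N'}\xi \in \mathfrak{n}\cdot E^0$ for some further $N' \geq 0$, producing the chain-level relation
\[
v_m^{N+N'}\cdot id_U \;\equiv\; 0 \pmod{\mathfrak{n}\cdot E^0 + d(E^{-1})}.
\]

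Following the argument of Proposition \ref{J-f-g}, the endomorphism complex $E^\bullet$ is obtained by extension of scalars from a bounded complex $\wt{E}^\bullet$ of finitely generated modules over a Noetherian polynomial subring $\laz_{\leq r} \subset P(m)[\ov{x}]$. Accordingly, $R$ is a finitely generated module over $\laz$ induced from an $\laz_{\leq r}$-finite module, and the cyclic piece $P(m)[\ov{x}]\cdot id_U \subset R$ is accessible by Noetherian means. Passing from the cochain-level relation to the cohomology via the short exact sequence of complexes $0 \to \mathfrak{n}E^\bullet \to E^\bullet \to E^\bullet/\mathfrak{n}E^\bullet \to 0$ and its associated long exact sequence, one concludes that $v_m^{N+N'}\cdot id_U \in \mathfrak{n}\cdot R$; a standard Nakayama-type argument on the $\laz_{\leq r}$-finite cyclic piece then produces a non-zero $\lambda \in P(m)[\ov{x}]$ (of the form $v_m^{N+N'} - \mu$ with $\mu \in \mathfrak{n}$, hence outside $\mathfrak{n}$) such that $\lambda\cdot id_U = 0$ in $R$, i.e., $\lambda \in Ann(U)$.

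The main obstacle is the descent step, where one must carefully argue that the cochain-level $\mathfrak{n}$-relation produces an $\mathfrak{n}$-relation in $R$ rather than just in the quotient $R/\text{(something)}$. This requires tracking how non-cycle summands in the decomposition $\sum \mu_\alpha \zeta_\alpha$ interact with the boundary operator, and is precisely where the Noetherianity of $\laz_{\leq r}$ is essential to avoid the non-flatness obstruction of $K(p,m)$ over $P(m)[\ov{x}]$.
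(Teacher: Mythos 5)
Your proposal correctly identifies the key base-change identity (the paper's equation for $\HHom_{Chow^{K(p,m)}_{Num}}$ as $\HHom_{Chow^{P(m)[\ov{x}]}_{Num}}\otimes_{P(m)[\ov{x}]}K(p,m)$) and correctly reduces the problem to understanding how the cohomology of the endomorphism complex $E^\bullet$ behaves under $\otimes_{P(m)[\ov{x}]}K(p,m)$. But the step you yourself flag as ``the main obstacle'' is a genuine gap, and the tools you invoke (Noetherianity of $\laz_{\leq r}$, a long exact sequence, Nakayama) do not close it. Concretely: from $id_{\ov{U}}=d(h)$ you get $v_m^{N+N'}\cdot id_U = d(e) + \sum_j n_j e_j$ with $n_j\in\mathfrak{n}$, but the $e_j$ are not cocycles, so the class of $\sum_j n_j e_j$ in $H^0(E^\bullet)$ lies only in the image of $H^0(\mathfrak{n}E^\bullet)\to H^0(E^\bullet)$, which in general strictly contains $\mathfrak{n}\cdot H^0(E^\bullet)$; the long exact sequence of $0\to\mathfrak{n}E^\bullet\to E^\bullet\to E^\bullet/\mathfrak{n}E^\bullet\to 0$ gives you exactly this weaker containment and nothing more. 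Even granting $v_m^{M}\cdot id_U\in\mathfrak{n}R$, your final step needs the relation to live inside the cyclic submodule $P(m)[\ov{x}]\cdot id_U$ in order to produce an annihilator of $id_U$, and that is a second unjustified descent. The underlying difficulty is that $K(p,m)$ is not flat over $P(m)[\ov{x}]$, so $H^0(E^\bullet\otimes K(p,m))\neq H^0(E^\bullet)\otimes K(p,m)$ in general; Noetherianity cannot repair this.

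The paper's proof supplies precisely the missing structural input. Using the Landweber--Novikov action and Landweber's filtration theorem, it shows that each $\HHom_{Chow^{P(m)[\ov{x}]}_{Num}}(N_i,N_j)$ is an iterated extension of modules $\laz/I(p,n)=P(n)[\ov{x}]$ with $n\geq m$; hence after inverting $v_m$ these modules become free, so the complex $(F,\partial)$ is adjusted for $\otimes_{P(m)[\ov{x}]}K(p,m)$. Two small commutative-algebra lemmas (if $L$ is finitely generated with $L\otimes K(p,m)=0$ then $Ann(L)\neq 0$, $Ann(L)\otimes K(p,m)=K(p,m)$, and all $\op{Tor}_i(L,K(p,m))$ vanish) then feed an induction on the hyper-Tor spectral sequence $E_2^{p,q}=\op{Tor}_p(H_q,K(p,m))\Rightarrow H_*(F\otimes K(p,m))=0$, yielding $H_0\otimes K(p,m)=\End(U)\otimes K(p,m)=0$ on the nose; the first lemma applied to $\End(U)$ finishes. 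To salvage your argument you would need to prove this Tor-vanishing (or some equivalent flatness/adjustedness statement), which is exactly the content your chain-level manipulation skips.
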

	
	\begin{proof}
		If $m=\infty$, then the above functor is an equivalence and there is nothing to prove. Below we will assume that $m$ is finite.
		For any $M_1,M_2\in Chow^{P(m)[\ov{x}]}_{Num}(k)$, the
		$$
		\HHom_{Chow^{P(m)[\ov{x}]}_{Num}(k)}(M_1,M_2)=\oplus_l
		\Hom_{Chow^{P(m)[\ov{x}]}_{Num}(k)}(M_1,M_2(l)[2l])
		$$ 
		is a finitely
		presented $P(m)[\ov{x}]$-module. Our object $U$ is given by some
		finite complex $\ldots\row N_i\stackrel{d_i}{\row}N_{i+1}\row\ldots$. The endomorphism ring $\End(U)$ can be identified with the $0$-th cohomology of the (finite) complex
		$$
		\ldots\stackrel{\partial}{\row}F_1\stackrel{\partial}{\row}F_0\stackrel{\partial}{\row}F_{-1}\stackrel{\partial}{\row}\ldots,
		\hspace{2mm}\text{where}\hspace{2mm}F_j=\oplus_i\HHom_{Chow^{P(m)[\ov{x}]}_{Num}(k)}(N_i,N_{i+j})
		$$ 
		are finitely presentated $P(m)[\ov{x}]$-modules
		and differentials are induced by $d_i$s.
		Moreover, $(F,\partial)=(\hat{F},\hat{\partial})\otimes_{\hat{P}}P(m)[\ov{x}]$, where $\hat{P}=\ff_p[v_m,\ldots,v_n][x_i,i\leq l]$, for 
		some $n$ and $l$, and $P(m)[\ov{x}]$ is free over $\hat{P}$. In particular, since $\hat{P}$ is Noetherian, the cohomology $H_j$ of
		our complex are finitely presented $P(m)[\ov{x}]$-modules. Note, that, for a smooth projective variety $X$, any $K(p,m)$-numerically trivial class, multiplied by some power of $v_m$, can be lifted to a $P(m)$-numerically trivial class, by \cite[Proposition 4.14]{INCHKm}. Hence,
		\begin{equation}
			\label{Homs-eq}
			\HHom_{Chow^{K(p,m)}_{Num}(k)}(\ov{M}_1,\ov{M}_2)=
			\HHom_{Chow^{P(m)[\ov{x}]}_{Num}(k)}(M_1,M_2)\otimes_{P(m)[\ov{x}]}K(p,m).
		\end{equation}
		Here $\HHom_{Chow^{P(m)[\ov{x}]}_{Num}(k)}(M_1,M_2)$ is given by 
		the numerical $P(m)[\ov{x}]$ of a direct summand of some smooth projective variety. There is a natural filtration by codimension
		of support on this $\laz$-module. The graded pieces of it possess the action of the Landweber-Novikov algebra $MU_*MU$. Since the respective $\laz$-modules are finitely presented, by the result of Landweber \cite[Lemma 3.3]{La73b}, these are extensions of (finitely many) 
		$\laz$-modules $\laz/I(q,n)$, and so, our $\HHom$ is an extension of such modules. Moreover, since it is a $P(m)$-module, it should be an extension of modules of the type
		$\laz/I(p,n)=P(n)[\ov{x}]$, with $n\geq m$.
		
		Let $P\{m\}^*$ be the free theory with the coefficient ring
		$P\{m\}=P(m)[v_m^{-1}]$. Note that the functor 
		$\otimes_{P(m)}P\{m\}$ is exact and it annihilates the modules
		$P(n)$, for $n>m$. Thus, 
		$$
		\HHom_{Chow^{P(m)[\ov{x}]}_{Num}(k)}(M_1,M_2)\otimes_{P(m)}P\{m\}
		$$ 
		is a free $P\{m\}[\ov{x}]$-module
		of finite rank. This shows that our complex $(F,\partial)$ is
		adjusted for the right exact functor $\otimes_{P(m)[\ov{x}]}K(p,m)$.
		
		We need the following facts. The first one is a variant of 
		\cite[Lem. 3.4]{TS}.
		
		\begin{lemma}
			\label{Pm-Km-l1}
			Let $L$ be a finitely-generated $P(m)[\ov{x}]$-module, such that\\ 
			$L\otimes_{P(m)[\ov{x}]}K(p,m)=0$. Then $Ann(L)\subset P(m)[\ov{x}]$ is non-zero. Moreover, $Ann(L)\otimes_{P(m)[\ov{x}]}K(p,m)=K(p,m)$.
		\end{lemma}
		
		\begin{proof}
			Denote as $I\subset P(m)[\ov{x}]$ the ideal generated by $v_r$, $r>m$ and all $x_i$, $i\neq p^s-1$. Then the proof is identical to that of \cite[Lemma 3.4]{TS} (which works for any polynomial algebra).
			\Qed
		\end{proof}
		
		\begin{lemma}
			\label{Pm-Km-l2}
			Let $L$ be a finitely-generated $P(m)[\ov{x}]$-module, such that\\
			$L\otimes_{P(m)[\ov{x}]}K(p,m)=0$. Then 
			$$\op{Tor}^{P(m)[\ov{x}]}_i(L,K(p,m))=0,\hspace{2mm}\text{for all}\,\, i.
			$$
		\end{lemma}
		
		\begin{proof}
			Here $\op{Tor}^{P(m)[\ov{x}]}_i(L,K(p,m))$ is a $K(p,m)$-module which is
			annihilated by every $u\in Ann(L)$. Since $Ann(L)\otimes_{P(m)[\ov{x}]}K(p,m)=K(p,m)$, this module is zero.
			\Qed
		\end{proof}
		
		Let $H_j$ be the $j$-cohomology of the complex $(F,\partial)$. 
		These are finitely-presented $P(m)[\ov{x}]$-modules.
		Let us show by the increasing induction on $l$ that 
		$\op{Tor}^{P(m)[\ov{x}]}_i(H_j,K(p,m))=0$, 
		$\forall j\leq l,\,\forall i$. This is true for $l<<0$, as our complex is finite. Suppose, it is true for $j<l$. We have the spectral sequence:
		$$
		E_2^{p,q}=\op{Tor}^{P(m)[\ov{x}]}_p(H_q,K(p,m))
		$$
		converging to the cohomology of $(F,\partial)\otimes_{P(m)[\ov{x}]}K(p,m)$ which is zero, since $\ov{U}=0$. Here 
		$d_r:E_r^{p,q}\row E_r^{p-r,q+r-1}$. By our condition, there are
		no differential to, or from the $E^{0,l}$ term. Thus,
		$H_l\otimes_{P(m)[\ov{x}]}K(p,m)=E_2^{0,l}=E_{\infty}^{0,l}=0$.
		By Lemma \ref{Pm-Km-l2}, $\op{Tor}^{P(m)[\ov{x}]}_i(H_l,K(p,m))=0$,
		for any $i$. The induction step and the statement are proven.
		
		In particular, $H_0\otimes_{P(m)[\ov{x}]}K(p,m)=0$. Since 
		$H_0=\End(U)$, by Lemma \ref{Pm-Km-l1}, $Ann(U)\subset P(m)[\ov{x}]$ is non-zero.
		\Qed
	\end{proof}

	\begin{proposition}
		\label{vm-nilp-Km-triv}
		Let $k$ be flexible and $U\in\widehat{\SH}_{(p,m)}^c(k)$. Then the 
		following conditions are equivalent:
		\begin{itemize}
			\item[$(1)$] $Ann(\widehat{M}^{MGL}(U))\subset\laz$ contains some power of $v_m$ (where $v_{\infty}=1$).
			\item[$(2)$] $t_{K(p,m)}(\widehat{M}^{MGL}(U))=0\in K^b(Chow^{K(p,m)}_{Num}(k))$.
		\end{itemize}
	\end{proposition}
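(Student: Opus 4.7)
The plan is to handle each implication separately. Throughout, write $M = \widehat{M}^{MGL}(U)$ and let $T(M) = t(M) \in K^b(Chow^{P(m)[\ov{x}]}_{Num}(k))$ denote its weight complex, so that $\ov{T(M)} = t_{K(p,m)}(M)$ by construction.

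For $(1) \Rightarrow (2)$, I would use that $t_{K(p,m)}$ is a tensor triangulated functor (combining Sosnilo's strong weight complex functor with Aoki's monoidality, as just established before Definition \ref{Morava-weight-coh}): scalar multiplication by $v_m^N$ on $M$ is transported to scalar multiplication by $v_m^N$ on $\ov{T(M)}$, and since $v_m$ is invertible in the coefficient ring $K(p,m) = \ff_p[v_m, v_m^{-1}]$, this forces $id_{\ov{T(M)}} = 0$, whence $\ov{T(M)} = 0$. The degenerate case $m = \infty$ (where $v_\infty = 1$) reduces to the tautology that the zero object maps to zero.

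For $(2) \Rightarrow (1)$ with $m$ finite, the plan is as follows. Since $\ov{T(M)} = 0$, one has $T(M) \in {\frak b}$, so Proposition \ref{Ann-b} together with Lemma \ref{Pm-Km-l1} (applied to $L = \op{End}(T(M))$) provides an element $a \in \op{Ann}(T(M)) \subset P(m)[\ov{x}]$ whose image in $K(p,m)$ is a unit. Choose a lift $\tilde{a} \in \laz$. Since the action of $\laz$ on $T(M)$ factors through $P(m)[\ov{x}]$, the scalar $\tilde{a}$ annihilates $T(M)$; by Bondarko's nilpotence principle \cite[Theorem 3.3.1.II]{Bon}, $\tilde{a}$ then acts nilpotently on $M$, i.e., $\tilde{a} \in \sqrt{\op{Ann}_{\laz}(M)}$. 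By Proposition \ref{sq-J-U-p-r} this radical equals $I(p,r)$ for some $r \geq m$. The key observation is that since $\tilde{a}$ maps to a unit in $K(p,m)$ and $I(p,m)$ is contained in the kernel of $\laz \to K(p,m)$, necessarily $\tilde{a} \notin I(p,m)$; this rules out $r = m$ and forces $r > m$. Consequently $v_m \in I(p,r) = \sqrt{\op{Ann}_{\laz}(M)}$, so some power $v_m^N$ lies in $\op{Ann}_{\laz}(M)$. The case $m = \infty$ is handled by the remark opening the proof of Proposition \ref{Ann-b}: the change-of-coefficients functor is then an equivalence, so $\ov{T(M)} = 0$ already gives $T(M) = 0$, and non-degeneracy of the weight structure of Proposition \ref{weight-str-M} forces $M = 0 = v_\infty \cdot M$.

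The main obstacle I anticipate is the careful bookkeeping of the two compatible module actions — $\laz$ on $M$ and $P(m)[\ov{x}]$ on $T(M)$ — and verifying that Bondarko's nilpotence criterion transfers the radical correctly between them; once that is in place, the rest reduces to the arithmetic dichotomy ``$r = m$ vs.\ $r > m$'' between Landweber invariant primes, with the element produced by Proposition \ref{Ann-b} serving precisely to rule out the former.
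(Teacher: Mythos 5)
Your argument is correct and follows essentially the same route as the paper: Bondarko's comparison $\sqrt{Ann(M)}=\sqrt{Ann(t(M))}$, Proposition \ref{Ann-b} to get a nonzero annihilator in $P(m)[\ov{x}]$, and Proposition \ref{sq-J-U-p-r} to identify the radical as $I(p,r)$ and force $r>m$. The only (harmless) deviations are cosmetic: for $(1)\Rightarrow(2)$ you invoke monoidality of $t_{K(p,m)}$ where the paper just uses the radical comparison, and for $(2)\Rightarrow(1)$ you use the stronger ``unit image'' clause of Lemma \ref{Pm-Km-l1} where mere non-vanishing of $Ann(t(M))$ modulo $I(p,m)$ already suffices.
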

	
	\begin{proof}
		Let $m<\infty$, $M=\widehat{M}^{MGL}(U)$ and $t(M)$ be any choice of the weight complex for the weight filtration from Proposition 
		\ref{weight-str-M}. By \cite[Theorem 3.3.1.II]{Bon}, the radicals of the annihilators of $M$ and $t(M)$ in $\laz$ coincide. Thus,
		$(1)\Rightarrow$ $Ann(t(M))\subset\laz$ contains some power of $v_m$. Then the projection of $t(M)$ to 
		$K^b(Chow^{K(p,m)}_{Num}(k))$
		is zero, since $v_m$ is inverted there.
		
		Conversely, if the projection of $t(M)$ to $K^b(Chow^{K(p,m)}_{Num}(k))$ is zero, then by Proposition \ref{Ann-b}, the annihilator
		of $t(M)$ in $P(m)[\ov{x}]$ is non-zero, which means that
		$\sqrt{Ann(t(M))}\subset\laz$ contains $I(p,m)$ strictly. Since,
		by Proposition \ref{sq-J-U-p-r},
		$\sqrt{Ann(t(M))}=\sqrt{Ann(M)}$ is a prime invariant ideal $I(p,r)$, we have that $r>m$ and so, this radical contains $v_m$. 
		Hence, $Ann(M)$ contains some power of $v_m$. 
		
		For $m=\infty$, it follows from the conservativity of the functor $t_{K(p,\infty)}$.
		\Qed
	\end{proof}
	
	Combining this result with the properties of the {\it Morava weight cohomology} functor, we get:
	
	\begin{proposition}
		\label{SHhat-ideal}
		Let $k$ be flexible. Objects $U\in\widehat{\SH^c}_{\!\!(p,m)}(k/k)$ for which
		$\widehat{M}^{MGL}(U)$ is annihilated by some power of $v_m$ (where $v_{\infty}=1$)
		form a prime $\otimes\op{-}\triangle$-ed ideal. 
	\end{proposition}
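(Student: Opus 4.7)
The plan is to exhibit the purported ideal as the kernel of a tensor triangulated functor whose target has prime zero ideal, and then invoke the general fact that preimages of prime $\otimes$-triangulated ideals are prime. By Proposition \ref{vm-nilp-Km-triv}, an object $U \in \widehat{\SH}^c_{(p,m)}(k/k)$ satisfies that $v_m$ acts nilpotently on $\widehat{M}^{MGL}(U)$ (resp.\ $\widehat{M}^{MGL}(U) = 0$ in the case $m = \infty$) if and only if $t_{K(p,m)}(\widehat{M}^{MGL}(U)) = 0$. Hence the subcategory in question coincides with the kernel of the composition
$$
\Phi \; := \; t_{K(p,m)} \circ \widehat{M}^{MGL} \; : \; \widehat{\SH}^c_{(p,m)}(k/k) \lrow K^b(Chow^{K(p,m)}_{Num}(k)).
$$

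Both constituents of $\Phi$ are tensor triangulated functors. The functor $\widehat{M}^{MGL}$ is a $\otimes$-functor by construction, coming from the adjoint pair (\ref{two}). The Morava weight complex functor $t_{K(p,m)}$ is a $\otimes-\triangle$-functor by the combination of Sosnilo's strong weight complex functor (which gives an honest triangulated lift of $t$) with Aoki's result that the strong weight complex functor is monoidal, as discussed in Section \ref{section-nMm}. Consequently $\Phi$ is itself a $\otimes-\triangle$-functor, and its kernel is automatically a thick $\otimes-\triangle$-ideal of $\widehat{\SH}^c_{(p,m)}(k/k)$.

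It remains only to verify primality. By Proposition \ref{Kpm-Num-zd}, the zero ideal of $K^b(Chow^{K(p,m)}_{Num}(k))$ is a prime $\otimes-\triangle$-ideal. If $X \otimes Y \in \ker(\Phi)$, then $\Phi(X) \otimes \Phi(Y) = \Phi(X \otimes Y) = 0$, and primality of $(0)$ in the target forces $\Phi(X) = 0$ or $\Phi(Y) = 0$; thus $X$ or $Y$ lies in $\ker(\Phi)$. This shows $\ker(\Phi)$ is a prime $\otimes-\triangle$-ideal, proving the claim.

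In this setup there is essentially no hard step left: all the substantive work has been carried out in the preceding sections. The identification of the ideal with $\ker(\Phi)$ via Proposition \ref{vm-nilp-Km-triv} rests on the finite generation of $\sqrt{J(U)}$ (Proposition \ref{J-f-g}) together with Landweber's classification of invariant prime ideals (Proposition \ref{sq-J-U-p-r}) and the analysis of $\mathrm{Tor}$ in Proposition \ref{Ann-b}; the primality of $(0)$ in the numerical Morava category relies on the semisimplicity from Proposition \ref{semi-simpl-Kpm} and the Kunneth-style absence of $\otimes$-zero-divisors. Once these are in place, the present proposition is a purely formal consequence.
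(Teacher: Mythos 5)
Your proposal is correct and follows essentially the same route as the paper's own proof: identify the ideal with the kernel of the composite $t_{K(p,m)}\circ\widehat{M}^{MGL}$ via Proposition \ref{vm-nilp-Km-triv}, note that both functors are tensor triangulated, and pull back the prime zero ideal of $K^b(Chow^{K(p,m)}_{Num}(k))$ from Proposition \ref{Kpm-Num-zd}. Nothing to add.
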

	
	\begin{proof} 
		By Proposition \ref{vm-nilp-Km-triv}, this ideal consists of objects $U$ such that the Morava weight complex $t_{K(p,m)}$ of
		$\widehat{M}^{MGL}(U)$ is zero. 
		By Proposition \ref{Kpm-Num-zd}, the zero ideal $(0)$ of $K^b(Chow^{K(p,m)}_{Num}(k))$ is prime. Since the functors $\widehat{M}^{MGL}$ and $t_{K(p,m)}$ are tensor-triangulated, the
		preimage of this ideal is also prime.\\
		\phantom{a}\hspace{5mm}\Qed
	\end{proof}
	
	Now we can prove our Main Theorem.
	Consider {\it isotropic realisations}
	$$
	\psi_{(p,m),E}:\SH^c(k)\row \SH^c_{(p,m)}(\wt{E}/\wt{E}),
	$$
	where $p$ is a prime number, $1\leq m\leq\infty$, $E/k$ runs over $\stackrel{(p,m)}{\sim}$-equivalence classes of field extensions, and $\wt{E}$ is its {\it flexible closure}.
	
	\begin{theorem}
		\label{Main}
		Let ${\frak a}_{(p,m),E}:=\op{ker}(\psi_{(p,m),E})$. Then
		\begin{itemize}
			\item[$(1)$] ${\frak a}_{(p,m),E}$ are points of the Balmer spectrum $\Spc(\SH^c(k))$ of (the compact part) of Morel-Voevodsky category. 
			\item[$(2)$] ${\frak a}_{(p,m),E}={\frak a}_{(q,n),F}$ if and only if $p=q$, $m=n$ and $E/k\stackrel{(p,m)}{\sim}F/k$.
			\item[$(3)$] ${\frak a}_{(p,\infty),E}$ is the image of the point ${\frak a}_{p,E}$ of \cite[Theorem 5.13]{INCHKm} under
			the natural map of spectra $\Spc(\DM^c(k))\row\Spc(\SH^c(k))$
			induced by the motivic functor $M:\SH^c(k)\row \DM^c(k)$.
		\end{itemize}
	\end{theorem}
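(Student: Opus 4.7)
The plan is to dispatch the three claims in order, leveraging the machinery of Sections \ref{section-weight}--\ref{section-nMm} for parts $(1)$ and $(3)$ and deferring the delicate $(p,m,E)$-dependence of part $(2)$ to the local categories of Section \ref{section-lsh}.

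For part $(1)$, I would factor the realisation as the composition of three $\otimes$-triangulated functors
$$
\psi_{(p,m),E}:\SH^c(k)\lrow\SH^c(\wt{E})\lrow \widehat{\SH}^c_{(p,m)}(\wt{E}/\wt{E})\lrow \SH^c_{(p,m)}(\wt{E}/\wt{E}),
$$
namely base change, projection to the $K(p,m)$-anisotropic quotient, and Verdier localisation by $\ca^c$. Since primeness of $\otimes$-triangulated ideals passes to pre-images under $\otimes$-triangulated functors, it is enough to see that the zero ideal of the right-most category is prime. Its pre-image in $\widehat{\SH}^c_{(p,m)}(\wt{E}/\wt{E})$ is precisely the thick ideal of compact objects $U$ for which $\widehat{M}^{MGL}(U)$ is $v_m$-nilpotent (with $v_\infty=1$), and Proposition \ref{SHhat-ideal} identifies this as a prime $\otimes$-triangulated ideal, because $\wt{E}$ is flexible.

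For part $(3)$, I would exhibit a commutative square of $\otimes$-triangulated functors
\begin{equation*}
\xymatrix{
\SH^c(k)\ar[r]^-{\psi_{(p,\infty),E}}\ar[d]_{M}&\SH^c_{(p,\infty)}(\wt{E}/\wt{E})\ar[d]^{\wt{M}}\\
\DM^c(k)\ar[r]^-{\psi_{p,E}}&\DM(\wt{E}/\wt{E};\ff_p)
}
\end{equation*}
in which $\wt{M}$ is the motivic functor induced by the agreement between the $K(p,\infty)^*$- and the $\CH^*/p$-notions of anisotropy. The image of ${\frak a}_{p,E}$ under the Balmer map is then $M^{-1}({\frak a}_{p,E})=\ker(\wt{M}\circ\psi_{(p,\infty),E})$, so it suffices to verify that $\wt{M}$ is conservative on the image $\psi_{(p,\infty),E}(\SH^c(k))$. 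For this I would combine Remark \ref{m-inf-case} with Bachmann's conservativity result \cite{BachCons}, recognise the Tanania model as the source of $\wt{M}$, and use the standard identification of isotropic $\MGL/p$-modules with isotropic $\DM(-;\ff_p)$ over a non-formally-real flexible field to conclude conservativity of $\wt{M}$ on the relevant image.

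Part $(2)$ I expect to be the main obstacle, and I would read off the three invariants of ${\frak a}_{(p,m),E}$ one at a time. The prime $p$ is recovered as the residue characteristic of the induced map out of the Grothendieck--Witt ring $\op{End}({\mathbbm{1}})\to\op{End}(\psi_{(p,m),E}({\mathbbm{1}}))$. The Morava index $m$ is then pinned down by the Mitchell-type test spaces $X_{p,n}$ of exact $p$-type $n$ from \cite{Mit}, adapted in \cite{TS}: the image $\psi_{(p,m),E}(X_{p,n})$ flips between vanishing and non-vanishing exactly as $n$ crosses $m$, so this pattern determines $m$. Finally, for the dependence on $E$, I would pass through the local categories $\SH^c_{(p,m)}(E/k)$ promised in Definition \ref{isotr-cat-K-p-m-E-k}, use the local-to-isotropic conservativity on the image of $\SH^c(k)$ over flexible fields (Proposition \ref{D-A9}) to convert the equality of the ideals into an equality of the corresponding local categories, and then apply Proposition \ref{ideals-Kpm-eq-ext} to translate this into $E/k\stackrel{(p,m)}{\sim}F/k$. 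The hardest step will be establishing the local-to-isotropic conservativity sharply enough that the ideal ${\frak a}_{(p,m),E}$ records the $K(p,m)$-isotropy class of every finitely generated sub-extension of $E/k$; without that sharpness, the invariant read off from the ideal could be strictly coarser than $\stackrel{(p,m)}{\sim}$.
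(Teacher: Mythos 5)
Your parts $(1)$ and $(2)$ follow the paper's argument essentially verbatim: $(1)$ reduces primeness of ${\frak a}_{(p,m),E}$ to Proposition \ref{SHhat-ideal} via the factorisation through $\widehat{\SH}^c_{(p,m)}(\wt{E}/\wt{E})$ (note that what you actually use, and what the paper uses, is primeness of the \emph{preimage} of the zero ideal under the localisation, which is exactly what Proposition \ref{SHhat-ideal} supplies); and $(2)$ recovers $p$ from the Balmer comparison map to $\op{Spec}(\op{GW}(k))$, recovers $m$ from the test spaces $X_{p,n}$ as in Proposition \ref{m-Xpn}, and recovers the $\stackrel{(p,m)}{\sim}$-class of $E$ from Proposition \ref{ideals-Kpm-eq-ext}, whose engine is precisely the local-to-isotropic conservativity (Proposition \ref{D-A9}) that you correctly single out as the hard point.

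The gap is in part $(3)$. Your reduction — that it suffices to prove $\wt{M}$ is conservative on the image of $\psi_{(p,\infty),E}$ — is correct, but the mechanism you propose (Remark \ref{m-inf-case}, Bachmann's conservativity, and an identification of isotropic $\MGL/p$-modules with isotropic motives ``over a non-formally-real flexible field'') only covers the case where $\wt{E}$ is not formally real. If $E$ is formally real (e.g.\ $E=\rr$), its flexible closure $\wt{E}=E(\pp^{\infty})$ is still formally real, Remark \ref{m-inf-case} does not apply, the subcategory $\ca$ need not vanish, and the cited conservativity theorem gives nothing; yet this is exactly the situation emphasised in Example \ref{real-case}, and it is the reason Definition \ref{isotr-cat-K-p-m} retains the second localisation step even for $m=\infty$. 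The paper's argument is uniform in $\wt{E}$: an object $U$ lies in ${\frak a}_{(p,\infty),E}$ iff $t_{K(p,\infty)}(\widehat{M}^{MGL}(U_{\wt{E}}))=0$, this Morava weight complex is identified with the weight complex of the isotropic motive $\widehat{M}(U_{\wt{E}})$ in $K^b(Chow^{\Ch}_{Num}(\wt{E}))$, and conservativity of the weight complex on compact objects of $\DM(\wt{E}/\wt{E};\ff_p)$ closes the loop. Relatedly, your ``standard identification of isotropic $\MGL/p$-modules with isotropic $\DM(-;\ff_p)$'' is not an off-the-shelf equivalence (via Hopkins--Morel--Hoyois, $\DM$ is a further localisation of $\MGL$-modules), so even in the non-formally-real case this step would need the same weight-structure input that the paper's route makes explicit.
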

	
	\begin{proof}
		We will prove only parts $(1)$ and $(3)$ here and will leave the proof of $(2)$ to the next section.
		
		$(1)$: The isotropic realisation $\psi_{(p,m),E}$ is the
		composition
                $$
                \SH^c(k)\row \widehat{\SH}^c_{(p,m)}(\wt{E}/\wt{E})\row \SH^c_{(p,m)}(\wt{E}/\wt{E}).
                $$ 
		The preimage of the zero ideal $(0)$ of $\SH^c_{(p,m)}(\wt{E}/\wt{E})$ under the second map, by definition, consists of those
		objects $U\in \widehat{\SH}^c_{(p,m)}(\wt{E}/\wt{E})$ whose
		$\widehat{M}^{MGL}$-motive is annihilated by some power of $v_m$,
		for $m<\infty$, respectively, is zero, for $m=\infty$.
		By Proposition \ref{SHhat-ideal}, this ideal is prime. As 
		${\frak a}_{(p,m),E}$ is the preimage of it under the first map,
		it is also prime and defines a point of the Balmer spectrum.
		
		$(3)$ The ideal ${\frak a}_{(p,\infty),E}$ consists of $U\in \SH^c(k)$ with $t_{K(p,\infty)}(\widehat{M}^{MGL}(U_{\wt{E}}))=0$. But the latter is nothing else than the weight complex
		$t(\widehat{M}(U_{\wt{E}}))$ of the ($p$-isotropic) motive of $U$, where
		$\widehat{M}:\SH_{(p,\infty)}(\wt{E}/\wt{E})\row \DM(\wt{E}/\wt{E};\ff_p)$ is the (isotropic) motivic functor. Since the weight complex is conservative (as any object is an extension of its
		own shifted weight cohomology, see also \cite[Theorem 3.3.1.V]{Bon}), this happens if and only if $M(U)$ belongs to ${\frak a}_{p,E}$. Thus, ${\frak a}_{(p,\infty),E}$
		is the image of ${\frak a}_{p,E}$ under the map induced by $M$.
		\Qed
	\end{proof}
	
	\begin{example}
		\label{exa-norm-var}
		Let $\alpha=\{a_1,\ldots,a_n\}\neq 0\in K^M_n(k)/p$, and
		$Q_{\alpha}$ be the norm-variety of Rost \cite{RoNVAC}. Then the invariant $I_{BP}(Q_{\alpha})=\op{Im}(BP_*(Q_{\alpha})\stackrel{\pi_*}{\row}BP)$ is $I(n)=(v_0,\ldots,v_{n-1})$.
		Hence $Q_{\alpha}$ is $K(p,m)$-isotropic, for $m<n$, and 
		$K(p,m)$-anisotropic, for $m\geq n$, and remains the same over $\wt{k}$ - see Proposition \ref{anis-Pm-Km-IX}.
		
		In particular, $\Sigma^{\infty}_{\pp^1}Q_{\alpha}\in{\frak a}_{(p,m),k}$, for $m\geq n$. On the other hand, for any $m<n$, 
		there is $l$ prime to $p$, so that the map
		${\mathbbm{1}}^{MGL}(d)[2d]\stackrel{l\cdot v_m}{\lrow}
		{\mathbbm{1}}^{MGL}$, for $d=p^m-1$, factors through $M^{MGL}(Q_{\alpha})$ (by the definition of $I(Q_{\alpha})$). Since ${\mathbbm{1}}$ doesn't
		vanish in $K^b(Chow^{K(p,m)}_{Num}(\wt{k}))$ and $l\cdot v_m$ is
		invertible there, we get that $\Sigma^{\infty}_{\pp^1}Q_{\alpha}\not\in{\frak a}_{(p,m),k}$, for $m<n$.
		\Red
	\end{example}
	
	\section{Local stable homotopy categories}
	\label{section-lsh}
	
	To prove part $(2)$ of the Main Theorem \ref{Main} we will need
	to introduce, in analogy with \cite[Definition 2.3]{Iso}, the local versions $\SH_{(p,m)}(E/k)$ of $\SH(k)$ corresponding to all possible finitely-generated extensions $E/k$ of the base field,
	where isotropic categories correspond to trivial extensions.
	
	Let $E/k$ be a finitely-generated extension and $P/k$ be a smooth
	projective variety with $k(P)=E$. Let ${\mathbf{Q}}$ be the disjoint union of all smooth projective connected varieties $Q$, such
	that $k(Q)/k\stackrel{(p,m)}{\gneqq}E/k$. 
	Let $\check{C}({\mathbf{Q}})$ be the respective \v{C}ech simplicial scheme, and ${\frak X}_{{\mathbf{Q}}}$ be the
	$\Sigma^{\infty}_{{\pp^1}}$-spectrum of it. This is a $\wedge$-projector. Let $\wt{{\frak X}}_{{\mathbf{Q}}}$ be the complementary
	projector $\op{Cone}({\frak X}_{{\mathbf{Q}}}\row{\mathbbm{1}})$.
	Similarly, let ${\frak X}_P$ be the  
	$\Sigma^{\infty}_{{\pp^1}}$-spectrum of the \v{C}ech simplicial
	scheme of $P$ (note, that it depends only on the extension $E/k$ - see \cite[2.3.11]{SSWc}, so can be denoted as ${\frak X}_{E/k}$). 
	Then $\Upsilon_{(p,m),E/k}:=\wt{{\frak X}}_{{\mathbf Q}}\wedge
	{\frak X}_P$ is a $\wedge$-projector in $\SH(k)$.
	
	Define the category $\widehat{\SH}_{(p,m)}(E/k)$ as
	$\Upsilon_{(p,m),E/k}\wedge \SH(k)$. This is naturally a full subcategory of $\SH(k)$, which is a retract 
	of it via the above projector. Note though, that the respective
	functors are not adjoint to each other, in general.
	
	We have an adjoint pair:
	\begin{equation}
		\label{one-E-k}
		\xymatrix{
			M^{MGL}:\SH(k) \ar@<1ex>[rr]& & \MGL\op{-}mod:B\ar@<1ex>[ll]
		}.
	\end{equation}
	Denote $\widehat{\MGL}_{(p,m),E/k}\op{-}mod:=\Upsilon^{MGL}_{(p,m),E/k}\otimes\MGL\op{-}mod$, where
	$\Upsilon^{MGL}_{(p,m),E/k}:=M^{MGL}(\Upsilon_{(p,m),E/k})$, then we get an adjoint pair
	\begin{equation}
		\label{two-E-k}
		\xymatrix{
			\widehat{M}^{MGL}:\widehat{\SH}_{(p,m)}(E/k) \ar@<1ex>[rr]& & \widehat{\MGL}_{(p,m),E/k}\op{-}mod:\widehat{B}\ar@<1ex>[ll]
		}.
	\end{equation}
	
	Let $\ca$ be the localising subcategory of 
	$\widehat{\SH}_{(p,m)}(E/k)$ generated by objects
	$\Upsilon_{(p,m),E/k}\wedge U$, where $U\in\SH^c(k)$ and the
	$\widehat{M}^{MGL}$-motive of it belongs to the tensor localising ideal 
	of $\widehat{\MGL}_{(p,m),E/k}\op{-}mod$ generated by
	$\widehat{{\mathbbm{1}}}^{MGL}/v_m$, where $v_{\infty}=1$.
	
	\begin{definition}
		\label{isotr-cat-K-p-m-E-k}
		The isotropic stable homotopy category $\SH_{(p,m)}(E/k)$ is the Verdier localisation of the category $\widehat{\SH}_{(p,m)}(E/k)$ with respect to $\ca$. It comes equipped with the natural realisation functor
		$$
		\phi_{(p,m),E}:\SH(k)\row \SH_{(p,m)}(E/k).
		$$
	\end{definition}
	
	In the case of a trivial extension $k/k$, we get the isotropic stable homotopy category $\SH_{(p,m)}(k/k)$.
	
	There is functoriality for the denominator of the extension $E/k$. Namely, if $L\row F\row E$ is a tower of finitely-generated field extensions and $P/L$, $Q/L$ are smooth projective varieties,
	such that $L(P)=E$ and $L(Q)/L\stackrel{(p,m)}{\gneqq}E/L$,
	then $Q$ stays $K(p,m)$-anisotropic over $L(P)$ and so, over $F(P_F)$. Thus, $F(Q_F)/F\stackrel{(p,m)}{\gneqq}F(P_F)/F
	\stackrel{(p,m)}{\sim}E/F$. Moreover, since there are rational maps in both directions between $P_F/F$ and a smooth model of $E/F$,
	we have ${\frak X}_{F(P_F)/F}={\frak X}_{E/F}$. Hence, we get a 
	natural functor
	$$
	\SH_{(p,m)}(E/L)\row \SH_{(p,m)}(E/F).
	$$
	In particular, we obtain the local to isotropic functor
	$\SH_{(p,m)}(E/k)\row \SH_{(p,m)}(E/E)$. In order to evaluate
	the conservativity of it, we will need the following fact (cf. \cite[Proposition 2.7]{Iso}).
	
	We will use the notations: $\hi_R:=M^{MGL}({\frak X}_R)$.
	
	\begin{proposition}
		\label{ker-loc-var-fin-type}
		Suppose, $U\in \SH^c(k)$ vanishes in $\SH_{(p,m)}(E/k)$, then
		there exists a smooth variety $Q/k$ of finite type, such that
		$k(Q)/k\stackrel{(p,m)}{\gneqq}E/k$ and
		$\wt{\hi}_Q\otimes\hi_P\otimes M^{MGL}(U)$ belongs to the tensor localising ideal generated by  
		${\mathbbm{1}}^{MGL}/v_m$, where $v_{\infty}=1$.
	\end{proposition}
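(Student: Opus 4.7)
The plan is as follows. First, unwind the hypothesis using the definition of the Verdier localisation: $U$ vanishes in $\SH_{(p,m)}(E/k)$ if and only if $\Upsilon_{(p,m),E/k}\wedge U \in \ca$. Since $U$ is compact in $\SH(k)$ and the projector functor $\Upsilon_{(p,m),E/k}\wedge(-)$ is a left adjoint preserving direct sums, $\Upsilon_{(p,m),E/k}\wedge U$ is compact in $\widehat{\SH}_{(p,m)}(E/k)$. Neeman's theorem then places it in the thick subcategory generated by the (compact) generators of $\ca$. Applying the tensor triangulated functor $\widehat{M}^{MGL}$, the motive $\widehat{M}^{MGL}(\Upsilon_{(p,m),E/k}\wedge U)=\wt{\hi}^{MGL}_{{\mathbf Q}}\otimes \hi^{MGL}_P\otimes M^{MGL}(U)$ belongs to the tensor thick ideal of $\widehat{\MGL}_{(p,m),E/k}-mod$ generated by $\widehat{{\mathbbm{1}}}^{MGL}/v_m$, hence, in the ambient $\MGL-mod$, to the tensor localising ideal $\mathcal{I}$ generated by ${\mathbbm{1}}^{MGL}/v_m$.

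Next, present ${\mathbf Q}$ as the filtered union of its finite subunions $Q_0$. Since $\Sigma^{\infty}_{\pp^1}$ and $M^{MGL}$ preserve colimits, and the cone commutes with filtered homotopy colimits,
\[
\wt{\hi}^{MGL}_{{\mathbf Q}}\otimes\hi^{MGL}_P\otimes M^{MGL}(U) \;=\; \op{hocolim}_{Q_0}\bigl(\wt{\hi}^{MGL}_{Q_0}\otimes\hi^{MGL}_P\otimes M^{MGL}(U)\bigr) \;\in\; \mathcal{I}.
\]
The central step is the \emph{finite extraction}: showing that at some finite stage the corresponding term already lies in $\mathcal{I}$. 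The plan is to invoke Balmer's \cite[Theorem 2.15]{BalSSS} inside the compactly generated category $\widehat{\MGL}_{(p,m),E/k}-mod$ to characterise membership in the ideal via nilpotence of $v_m$ on the compact object $\widehat{M}^{MGL}(\Upsilon_{(p,m),E/k}\wedge U)$ (for $m<\infty$; $p$-torsion for $m=\infty$). A nullhomotopy of some $v_m^N$ on this compact homotopy colimit must, by the universal property of filtered colimits, be realised at some finite $Q_0$ (possibly after enlarging inside ${\mathbf Q}$), giving $\wt{\hi}^{MGL}_{Q_0}\otimes\hi^{MGL}_P\otimes M^{MGL}(U)\in\mathcal{I}$.

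Finally, convert the finite disjoint union $Q_0=\coprod_{i=1}^n Q_i$ into a single smooth variety. The key algebraic identity is $\wt{\hi}_{Q_0}=\bigotimes_{i=1}^n\wt{\hi}_{Q_i}$, so the established containment can be rewritten as $\bigotimes_i\wt{\hi}^{MGL}_{Q_i}\otimes\hi^{MGL}_P\otimes M^{MGL}(U)\in\mathcal{I}$. Take $Q=\prod_{i=1}^n Q_i$: this is smooth projective irreducible, and its function field is a compositum of the $k(Q_i)$, so $k(Q)/k\stackrel{(p,m)}{\gneqq}E/k$. Expanding $\wt{\hi}_Q=\operatorname{Cone}(\bigotimes_i\hi_{Q_i}\to{\mathbbm{1}})$ via the identity $1-\prod_i(1-\wt{\hi}_{Q_i})=\sum_{\emptyset\ne S\subseteq\{1,\ldots,n\}}(-1)^{|S|+1}\bigotimes_{i\in S}\wt{\hi}_{Q_i}$ as an iterated cone in the triangulated category presents $\wt{\hi}^{MGL}_Q$ as a finite extension of tensor products of proper subfamilies $\{\wt{\hi}^{MGL}_{Q_i}\}_{i\in S}$, each of which, tensored with $\hi^{MGL}_P\otimes M^{MGL}(U)$, lies in $\mathcal{I}$ (being a ``smaller'' anisotropy condition already forced by the full tensor in $\mathcal{I}$ after applying the ideal's stability under the additional projectors $\hi^{MGL}_{Q_j}$). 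The principal obstacle is the finite-extraction step, which requires a careful interplay between Balmer's nilpotence characterisation, compactness of the object in $\widehat{\MGL}_{(p,m),E/k}-mod$, and the filtered-colimit structure of $\wt{\hi}^{MGL}_{{\mathbf Q}}$; the subsidiary subtlety lies in the coproduct-to-product transfer just described.
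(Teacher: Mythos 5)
Your overall strategy --- pass to $\MGL$-modules, view ${\mathbf Q}$ as a filtered union of finite subunions, and extract a finite stage by compactness --- is the same as the paper's, but the two steps you yourself flag as delicate are exactly where the argument breaks. The compactness you rely on is not available: the paper explicitly notes that for the local categories the projector $\Upsilon_{(p,m),E/k}\wedge(-)$ is \emph{not} adjoint to the inclusion (it mixes the ``left-adjoint-type'' projector $\wt{{\frak X}}_{\mathbf Q}$ with the ``right-adjoint-type'' one ${\frak X}_P$), so $\Upsilon_{(p,m),E/k}\wedge U$ need not be compact in $\widehat{\SH}_{(p,m)}(E/k)$; likewise $\wt{\hi}_{Q_0}\otimes\hi_P\otimes M^{MGL}(U)$ is not compact in $\MGL-mod$, since \v{C}ech projectors are infinite extensions of motives of powers of the varieties. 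Hence neither Balmer's nilpotence criterion nor the ``nullhomotopy realised at a finite stage'' step applies as stated. The paper's workaround is to replace $\hi_P$ by the genuinely compact $M^{MGL}(P)$ (the two generate the same localising tensor ideal), set $M=M^{MGL}(P)\otimes M^{MGL}(U)$, and observe that the canonical map $M\row\wt{\hi}_{\mathbf Q}\otimes M$ from a compact object into an object of ${\cal I}$ factors through a compact object $N$ of ${\cal I}$ and then through a finite stage $\wt{\hi}_Q\otimes M$; tensoring with the idempotent $\wt{\hi}_Q$ exhibits $\wt{\hi}_Q\otimes M$ as a retract of $\wt{\hi}_Q\otimes N\in{\cal I}$. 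No nilpotence theorem is needed.

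Your final coproduct-to-product conversion is also wrong, and in the direction that cannot be repaired. For the disjoint union one has $\wt{\hi}_{\coprod_iQ_i}=\bigotimes_i\wt{\hi}_{Q_i}$, whereas for the product $\wt{\hi}_{\prod_iQ_i}=\op{Cone}(\bigotimes_i\hi_{Q_i}\row{\mathbbm{1}})$, whose inclusion-exclusion expansion involves $\bigotimes_{i\in S}\wt{\hi}_{Q_i}$ for \emph{proper} nonempty subsets $S$. Knowing that the full tensor $\bigotimes_{i=1}^n\wt{\hi}_{Q_i}\otimes\hi_P\otimes M^{MGL}(U)$ lies in ${\cal I}$ says nothing about these terms: a tensor ideal absorbs extra factors $\wt{\hi}_{Q_j}$, it does not shed them, so your implication runs backwards (indeed $\wt{\hi}_{\coprod}\otimes\wt{\hi}_{\prod}=\wt{\hi}_{\coprod}$, so membership for the disjoint union is the \emph{weaker} statement). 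The correct conclusion is simply to take $Q$ to be the finite disjoint union itself --- still a smooth variety of finite type, with $k(Q)/k\stackrel{(p,m)}{\gneqq}E/k$ read component-wise --- which is all that the subsequent applications in the paper require.
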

	
	\begin{proof}
		We know that 
		$\wt{\hi}_{{\mathbf{Q}}}\otimes\hi_P\otimes M^{MGL}(U)$ belongs to the tensor localising ideal ${\mathcal I}$ generated by 
		${\mathbbm{1}}^{MGL}/v_m$. Then 
		$\wt{\hi}_{{\mathbf{Q}}}\otimes M^{MGL}(P)\otimes M^{MGL}(U)$
		also belongs to the same ideal (as
		$\hi_P\otimes M^{MGL}(P)=M^{MGL}(P)$, because the projection $P\times P\row P$ has a splitting - the diagonal, see \cite[Lemma 1.15]{MV}). Since $M:=M^{MGL}(P)\otimes M^{MGL}(U)$ and the generator ${\mathbbm{1}}^{MGL}/v_m$ of
		${\mathcal I}$ are compact, the map $M\row
		\wt{\hi}_{{\mathbf{Q}}}\otimes M$
		factors throughs some compact object $N$ of ${\mathcal I}$, and so,
		is the composition 
		$$
		M\row N\row\wt{\hi}_Q\otimes M\row\wt{\hi}_{{\mathbf{Q}}}\otimes M,
		$$
		for some variety $Q$ of finite type as above. Moreover, (by passing to a larger $Q$, if necessary) we may 
		assume that the map $M\row\wt{\hi}_Q\otimes M$ here is
		$({\mathbbm{1}}^{MGL}\row\wt{\hi}_Q)\otimes id_M$.
		Tensoring with $\wt{\hi}_Q$, we see that the identity map of
		$\wt{\hi}_Q\otimes M$ factors through $\wt{\hi}_Q\otimes N\in{\mathcal I}$. Hence, $\wt{\hi}_Q\otimes M$ is a direct summand
		of $\wt{\hi}_Q\otimes N$ and belongs to ${\mathcal I}$.
		
		Since $\wt{\hi}_Q\otimes M^{MGL}(P)\otimes M^{MGL}(U)$ belongs
		to ${\mathcal I}$ and ${\hi}_P$ belongs to the tensor localising ideal generated by $M^{MGL}(P)$, we get that
		$\wt{\hi}_Q\otimes{\hi}_P\otimes M^{MGL}(U)$ belongs
		to ${\mathcal I}$.
		\Qed
	\end{proof}
	
	In analogy with \cite[Proposition 2.9]{Iso} we have:
	
	\begin{proposition}
		\label{D-A9}
		Let $E/k$ be a finitely-generated extension of a flexible field. Then the functor
		$$
		\SH_{(p,m)}(E/k)\lrow \SH_{(p,m)}(E/E)
		$$
		is conservative on the image of $\phi_{(p,m),E}$ on $\SH^c(k)$.
	\end{proposition}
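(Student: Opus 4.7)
The plan is to mirror the motivic template of \cite[Proposition 2.9]{Iso}, adapted to the $MGL$-module setting via Proposition \ref{ker-loc-var-fin-type}. Let $U\in\SH^c(k)$ with $\phi_{(p,m),E}(U)$ mapping to $0$ in $\SH_{(p,m)}(E/E)$. Since the functor $\SH_{(p,m)}(E/k)\to\SH_{(p,m)}(E/E)$ is induced by base change $(-)_E$ followed by the projector for $E/E$ (where ${\frak X}_{E/E}={\mathbbm 1}$), the assumption gives that $U_E\in\SH^c(E)$ vanishes in $\SH_{(p,m)}(E/E)$. Applying Proposition \ref{ker-loc-var-fin-type} to $U_E$ over $E$ produces a smooth $E$-variety $Q'$ of finite type with $E(Q')/E\stackrel{(p,m)}{\gneqq} E/E$ (so any smooth projective model $Q'^{\op{proj}}/E$ of $E(Q')$ is $K(p,m)$-anisotropic by Proposition \ref{anis-Pm-Km-IX}) and
\[
\wt{\hi}_{Q'}\wedge M^{MGL}(U_E)\in{\cal I}_E,
\]
where ${\cal I}_E\subset\MGL_E\text{-mod}$ is the tensor localising ideal generated by ${\mathbbm 1}^{MGL}/v_m$ (with $v_\infty=1$).

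I would then spread $Q'$ to a smooth $k$-variety: generic smoothness provides a dense open $V\subset P$ and a smooth morphism $\wt{Q}\to V$ of finite type whose generic fibre is $Q'$; take a smooth projective compactification $\wt{Q}^{\op{proj}}/k$ via Hironaka. A function-field calculation, using that $k(P)\otimes_k k(P)=k(P\times_k P)$ for $P$ geometrically integral, gives $k(\wt{Q}^{\op{proj}}_E)=k(P\times_k P)(Q')=k(P_E\times_E Q'^{\op{proj}})$, so $\wt{Q}^{\op{proj}}_E$ is birational over $E$ to $P_E\times_E Q'^{\op{proj}}$. Factoring the structure push-forward of the latter through $Q'^{\op{proj}}$ yields $I(P_E\times_E Q'^{\op{proj}})\subseteq I(Q'^{\op{proj}})\subseteq I(m)$, and combined with birational invariance of $K(p,m)$-anisotropy for smooth projective varieties, Proposition \ref{anis-Pm-Km-IX} then gives $K(p,m)$-anisotropy of $\wt{Q}^{\op{proj}}_E$. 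The direction $k(\wt{Q}^{\op{proj}})/k\stackrel{(p,m)}{\geq} E/k$ is immediate from the dominant rational map $\wt{Q}^{\op{proj}}\dashrightarrow P$ supplying $P$ with a rational point over $k(\wt{Q}^{\op{proj}})$, so altogether $k(\wt{Q}^{\op{proj}})/k\stackrel{(p,m)}{\gneqq} E/k$. Consequently, $\wt{Q}^{\op{proj}}$ is a connected component of the ${\mathbf Q}$ defining $\Upsilon_{(p,m),E/k}$, and $\wt{\hi}_{\wt{Q}^{\op{proj}}}$ is a tensor factor of $\wt{\hi}_{{\mathbf Q}}$.

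The concluding step is to show $\wt{\hi}_{\wt{Q}^{\op{proj}}}\wedge{\frak X}_P\wedge M^{MGL}(U)\in{\cal I}_k$, which then encodes $\phi_{(p,m),E}(U)=0$. For this I would combine Neeman's lemma (the compactness of $\wt{\hi}_{Q'}\wedge M^{MGL}(U_E)$ places it in the thick tensor ideal generated by ${\mathbbm 1}^{MGL}/v_m$, so the relation is witnessed by a finite diagram) with continuity of $\SH$ in the base, realising $\SH^c(E)$ as a filtered colimit of $\SH^c(V)$ over opens $V\subset P$: the $E$-level thick-ideal witness spreads to a $V$-level witness for a sufficiently small $V$, and smashing with ${\frak X}_P$ concentrates the statement at the generic point of $P$, compatibly yielding the $k$-level ideal membership for $\wt{\hi}_{\wt{Q}}\wedge{\frak X}_P\wedge M^{MGL}(U)$; the open embedding $\wt{Q}\hookrightarrow\wt{Q}^{\op{proj}}$ gives the final comparison with $\wt{\hi}_{\wt{Q}^{\op{proj}}}$. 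The principal obstacle is this transfer step, requiring a careful implementation of continuity for $\SH$ and its $MGL$-module version together with compatibility of the ${\frak X}_P$-smashing with restriction to the generic point of $P$. A secondary technical input is birational invariance of $K(p,m)$-anisotropy for smooth projective varieties in the strict-inequality check.
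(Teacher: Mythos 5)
Your route is genuinely different from the paper's: you try to descend from $E$ to $k$ in one step, by spreading $Q'/E$ out over a dense open $V\subseteq P$ and then transferring the thick-ideal membership ``by continuity,'' whereas the paper factors $E/k$ as a purely transcendental extension followed by a finite one and proves conservativity separately for the two stages (Lemmas \ref{D-A7} and \ref{D-A8}). The step you yourself flag as the ``principal obstacle'' is not a technicality but a genuine gap. Continuity realises $\SH^c(E)$ (and its $\MGL$-module version) as $\op{colim}_V$ of the categories over opens $V\subseteq P$ \emph{as base schemes}; it therefore lets you spread the witness for $\wt{\hi}_{Q'}\otimes M^{MGL}(U_E)\in{\cal I}_E$ to a statement in $\MGL(V)\text{-}mod$. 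But the conclusion you need lives in $\MGL(k)\text{-}mod$ after smashing with ${\frak X}_P$, and ${\frak X}_P\wedge(-)$ is a totalization of the \v{C}ech simplicial object $P^{\times\bullet}_+\wedge(-)$ --- a descent-type (co)limit over products of $P$, not a filtered colimit over opens of $P$. In general $\Hom({\frak X}_P\wedge A,{\frak X}_P\wedge B)$ is not computed by $\Hom_{k(P)}(A_{k(P)},B_{k(P)})$; the passage from ``vanishes at the generic point of $P$'' to ``vanishes after $\wedge\,{\frak X}_P$ over $k$'' is precisely the conservativity assertion being proved, so the argument is circular at its decisive point. (A repair via $p_\#$ for $p\colon V\to\op{Spec}(k)$ also fails: the \v{C}ech complement $\wt{\hi}_{\wt{Q}/V}$ is built from fibre products over $V$, and $p_\#$ of it does not compare with the $k$-level object $\wt{\hi}_{\wt{Q}^{\op{proj}}}$.)

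A symptom of the same problem is that your argument never uses flexibility of $k$, which is an essential hypothesis. In the paper it enters in Lemma \ref{D-A7}: the auxiliary variety, the object and the ideal-membership witness produced over the purely transcendental extension $L$ are first recognised as being defined over a finitely generated subfield $M$ with $L/M$ purely transcendental --- so that the splitting of $\MGL(M)\text{-}mod\to\MGL(L)\text{-}mod$ satisfying the projection formula descends the ideal membership to $M$ --- and then $M$ is re-embedded into $k$ with $k/M$ purely transcendental, which requires $k$ to have infinite transcendence degree over a field of definition. The finite-extension stage is then handled by the $E\otimes_L E$ argument of Lemma \ref{D-A8}. The geometric portion of your proposal (spreading out $Q'$ and verifying $k(\wt{Q}^{\op{proj}})/k\stackrel{(p,m)}{\gneqq}E/k$ via $I(P_E\times_E Q'^{\op{proj}})\subseteq I(Q'^{\op{proj}})$ and birational invariance) is reasonable, but the categorical transfer is where the whole weight of the proposition lies, and as proposed it would fail.
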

	
	\begin{proof}
		We follow the proof of \cite[Proposition 2.9]{Iso}.
		Any finitely generated extension is a tower of a purely transcendental extension followed by a finite one.
		We will first treat the purely transcendental part.
		
		\begin{lem}
			\label{D-A7}
			Let $E/L/k$ be a tower of finitely generated extensions where $L/k$ is a purely transcendental extension of a flexible field.
			Then the functor
			$$
			\SH_{(p,m)}(E/k)\lrow \SH_{(p,m)}(E/L)
			$$
			is conservative on the image of $\phi_{(p,m),E}$ on $\SH^c(k)$.
		\end{lem}
		
		\begin{proof}
			Let $L=k(\aaa^n)$, and $E=k(\ov{R})$ for some smooth variety $\ov{R}/k$.
			Let $\ov{U}\in Ob({\frak X}_{\ov{R}}\wedge \SH^c(k))$ be an object vanishing in $\SH_{(p,m)}(E/L)$. Then,
			according to the Proposition \ref{ker-loc-var-fin-type}, there exists a
			variety $\ov{Q}/L$ of finite type such that $L(\ov{Q})/L\stackrel{(p,m)}{\gneqq} L(\ov{R})/L$ and $\wt{\hi}_{\ov{Q}}\otimes M^{MGL}(\ov{U_L})$ belongs to the tensor localising ideal generated by ${\mathbbm{1}}^{MGL}/v_m$.
			
			The condition $L(\ov{Q})/L\stackrel{(p,m)}{\gneqq} L(\ov{R})/L$ means that we have a $K(p,m)$-$L$-correspondence
			$\ov{\alpha}:\ov{Q}\rightsquigarrow \ov{R}_L$ of degree one, and there is no such correspondence in the opposite direction.
			Since $k=k_0(\pp^{\infty})$ is flexible,
			varieties $\ov{R}$ and $\ov{Q}$ are, actually, defined over $F$ and $M=F(\aaa^n)$, respectively, where extensions $k/F/k_0$ are
			purely transcendental and $F/k_0$ is moreover finitely generated. By the same reason, we can assume that
			the object $\ov{U}$ is defined over $F$, while
			the correspondence $\ov{\alpha}$ is defined over $M$.
			So, there exist varieties $R/F$, $Q/M$, an object $U$ of ${\frak X}_{R}\wedge \SH^c(F)$ and a degree one $K(p,m)$-$M$-correspondence
			$\alpha:Q\rightsquigarrow R_M$ such that $R|_k=\ov{R}$, $Q|_L=\ov{Q}$, $U|_k=\ov{U}$ and $\alpha|_L=\ov{\alpha}$.
			Note, that we still have: $M(Q)/M\stackrel{(p,m)}{\gneqq} M(R)/M$ (since $\alpha$ is defined over $M$ and by functoriality). 
			
			Let $V=M^{MGL}(R)\otimes M^{MGL}(U)$. We know that the object $W=\wt{\hi}_Q\otimes M^{MGL}(U_M)$, when restricted to $L$, belongs to the $\otimes$-localising
			ideal of $\MGL(L)\op{-}mod$ generated by ${\mathbbm{1}}^{MGL}/v_m$. Then the same is
			true about $Y=M^{MGL}(R_M)\otimes W=\wt{\hi}_Q\otimes V_M$. But $V$ is compact. As in the 
			proof of Proposition \ref{ker-loc-var-fin-type}, we get that $Y_L$ is a direct summand of $\wt{\hi}_Q\otimes N$, for some compact representative $N$ of the ideal. Hence, $Y$
			belongs to the described ideal already over some finitely generated purely transcendental extension $M'=F'(\aaa^n)$ of $M=F(\aaa^n)$, where $N$ and morphisms involved are defined (here the extensions $k/F'/F$ are purely transcendental). Then the same happens to $W$ (recall, that $W$ is divisible by $\hi_{R_M}$, and so, belongs to the $\otimes$-localising ideal generated by $Y$).

			The extension
			$M'/F'$ can be embedded into $k/F'$ making $k/M'$ purely transcendental. Let $Q'$ be a variety over $k$ obtained from
			$Q_{M'}$ using this embedding. Then $k(Q')/k\stackrel{(p,m)}{\gneqq} k(\ov{R})/k$ (as $k/M$ is purely transcendental).
			
			Since $\wt{\hi}_{Q'}\otimes M^{MGL}(\ov{U})$ belongs to the $\otimes$-localising
			ideal of $\MGL(k)\op{-}mod$ generated by ${\mathbbm{1}}^{MGL}/v_m$, we obtain that 
			$\ov{U}=0$ in $\SH_{(p,m)}(E/k)$.
			\Qed
		\end{proof}
		
		Using Lemma \ref{D-A7} our problem is reduced to
		the case of a finite extension. In this situation, the statement is true for an arbitrary field.
		
		\begin{lem}
			\label{D-A8}
			Let $E/L$ be a finite extension of fields. Then the functor
			$$
			\SH_{(p,m)}(E/L)\lrow \SH_{(p,m)}(E/E)
			$$
			is conservative.
		\end{lem}
		
		\begin{proof}
			Let $E=L(P)$ for some smooth connected 0-dimensional variety $P$.
			Let $U\in Ob(\SH^c(L))$ be an object
			vanishing in $\SH_{(p,m)}(E/E)$. Then, for the  disjoint union $\ov{{\mathbf{Q}}}$ of all anisotropic varieties over $E$,
			we have: $\wt{\hi}_{\ov{{\mathbf{Q}}}}\otimes M^{MGL}(U_E)$ belongs to the tensor localising ideal generated by ${\mathbbm{1}}^{MGL}/v_m$. Consider a smooth $L$-variety $\widehat{{\mathbf{Q}}}$
			given by the composition
			$\ov{{\mathbf{Q}}}\row\op{Spec}(E)\row\op{Spec}(L)$. We have a natural map $\ov{{\mathbf{Q}}}\row\widehat{{\mathbf{Q}}}_E$, and so, $\wt{\frak{X}}_{\ov{{\mathbf{Q}}}}\wedge\wt{\frak{X}}_{\widehat{{\mathbf{Q}}}_E}=\wt{\frak{X}}_{\widehat{{\mathbf{Q}}}_E}$. Clearly, $L(\widehat{{\mathbf{Q}}})/L\stackrel{(p,m)}{\geq}L(P)/L$.
			Suppose, these are equivalent. Then in the commutative
			diagram
			$$
			\xymatrix{
				\ov{{\mathbf{Q}}} \ar[r] & \op{Spec}(E) \ar[r] & \op{Spec}(L) \\
				\widehat{{\mathbf{Q}}}_E \ar[u] \ar[r] & \op{Spec}(E\otimes_L E) \ar[u] \ar[r] & \op{Spec}(E) \ar[u]
			}
			$$
			the $K(p,m)$-push-forward along the total lower horizontal map is surjective. Here $\op{Spec}(E\otimes_L E)$ is a disjoint union of 
			$\op{Spec}(F)$ for some fields $F$. But since $[E:L]$ is finite, the degrees of both maps $\op{Spec}(E)\llow\op{Spec}(F)\lrow\op{Spec}(E)$
			coincide. In particular, the respective $K(p,m)$-push-forwards coincide as
			well. Hence, the $K(p,m)$-push-forward along the map
			$\ov{{\mathbf{Q}}}\row\op{Spec}(E)$ is surjective - a contradiction, as
			$\ov{{\mathbf{Q}}}$ is $K(p,m)$-anisotropic.
			Hence, $\widehat{{\mathbf{Q}}}_E$ is $K(p,m)$-anisotropic and so, there is a map $\widehat{{\mathbf{Q}}}\row {\mathbf{Q}}$ (over $L$), 
			where ${\mathbf{Q}}$ is a disjoint union of all $L$-varieties $Q$
			with $L(Q)/L\stackrel{(p,m)}{\gneqq}L(P)/L$. In particular,
			$\wt{{\frak{X}}}_{\widehat{{\mathbf{Q}}}}\wedge\wt{{\frak X}}_{{\mathbf{Q}}}=
			\wt{{\frak X}}_{{\mathbf{Q}}}$.
			
		    Since $\wt{{\frak X}}_{\ov{{\mathbf{Q}}}}\wedge\wt{{\frak X}}_{{\mathbf{Q}}_E}=\wt{{\frak X}}_{{\mathbf{Q}}_E}$, 
			we know that $(\wt{\hi}_{{\mathbf{Q}}}\otimes M^{MGL}(U))_E$ belongs to the tensor localising ideal generated by 
			${\mathbbm{1}}^{MGL}/v_m$. Then the same is true about
			$M^{MGL}(P)\otimes\wt{\hi}_{{\mathbf{Q}}}\otimes M^{MGL}(U)$,
			and so, also for $\hi_P\otimes\wt{\hi}_{{\mathbf{Q}}}\otimes M^{MGL}(U)$.
			Thus, $U=0$ in $\SH_{(p,m)}(E/L)$.
			\Qed
		\end{proof}
		
		This finishes the proof of the Proposition \ref{D-A9}.
		\Qed
	\end{proof}
	
	Combining the restriction $k\row\wt{k}$ with $\phi_{(p,m),\wt{E}}$, we get the
	{\it local realisation}:
	$$
	\ffi_{(p,m),E}:\SH(k)\row \SH_{(p,m)}(\wt{E}/\wt{k}).
	$$
	From Proposition \ref{D-A9} we immediately get:
	
	\begin{corollary}
		\label{fi-psi-ker}
		$$
		{\frak{a}}_{(p,m),E}=\op{ker}(\ffi^c_{(p,m),E}).
		$$
	\end{corollary}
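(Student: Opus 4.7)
The plan is to relate the two realisations via the natural local-to-isotropic functor and then apply Proposition~\ref{D-A9} to the flexible field $\wt{k}$.

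First, observe that $\psi_{(p,m),E}$ and $\ffi_{(p,m),E}$ factor through the common intermediate category $\SH(\wt{k})$. Indeed, the definition of $\psi_{(p,m),E}$ composes the restriction $\SH(k)\to\SH(\wt{E})$ with projection to $\SH_{(p,m)}(\wt{E}/\wt{E})$, and this restriction factors as $\SH(k)\to\SH(\wt{k})\to\SH(\wt{E})$. Combining this with the functoriality of the local categories (discussed after Definition~\ref{isotr-cat-K-p-m-E-k}), which gives a natural functor
$$\SH_{(p,m)}(\wt{E}/\wt{k})\to\SH_{(p,m)}(\wt{E}/\wt{E}),$$
we obtain a commutative diagram
$$
\xymatrix{
\SH(k)\ar[r]\ar[dr]_-{\ffi_{(p,m),E}}\ar@/_2pc/[ddr]_-{\psi_{(p,m),E}} & \SH(\wt{k})\ar[d]^-{\phi_{(p,m),\wt{E}}}\ar[rd] & \\
& \SH_{(p,m)}(\wt{E}/\wt{k})\ar[d] & \SH(\wt{E})\ar[dl]\\
& \SH_{(p,m)}(\wt{E}/\wt{E}) &
}
$$
where the rightmost triangle commutes by comparing the two ways to build the isotropic category over $\wt{E}$ (starting from $\wt{k}$ versus starting from $\wt{E}$ directly).

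Next, restrict to compact parts. The restriction $\SH^c(k)\to\SH^c(\wt{k})$ is well defined since base change preserves compactness, and the localisation functors to $\widehat{\SH}_{(p,m)}(\wt{E}/\wt{k})$ and to $\widehat{\SH}_{(p,m)}(\wt{E}/\wt{E})$ preserve compactness as they are generated (as Verdier quotients) by compact objects. Hence for any $U\in\SH^c(k)$, the object $\ffi^c_{(p,m),E}(U)$ lies in the image of $\phi_{(p,m),\wt{E}}$ applied to $U_{\wt{k}}\in\SH^c(\wt{k})$.

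Now the base field $\wt{k}$ is flexible, so Proposition~\ref{D-A9} applies: the functor $\SH_{(p,m)}(\wt{E}/\wt{k})\to\SH_{(p,m)}(\wt{E}/\wt{E})$ is conservative on the image of $\phi_{(p,m),\wt{E}}$ on $\SH^c(\wt{k})$. Therefore $\psi^c_{(p,m),E}(U)=0$ if and only if the image of $\ffi^c_{(p,m),E}(U)$ under the local-to-isotropic functor vanishes, if and only if $\ffi^c_{(p,m),E}(U)=0$. This gives the equality ${\frak a}_{(p,m),E}=\op{ker}(\ffi^c_{(p,m),E})$. The only non-trivial input is Proposition~\ref{D-A9}; the rest is bookkeeping about commutativity of the comparison diagram and preservation of compactness.
\Qed
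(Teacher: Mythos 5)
Your proof is correct and follows essentially the same route as the paper: both realisations factor through $\SH(\wt{k})$ and are related by the local-to-isotropic functor $\SH_{(p,m)}(\wt{E}/\wt{k})\row\SH_{(p,m)}(\wt{E}/\wt{E})$, and Proposition~\ref{D-A9} applied over the flexible field $\wt{k}$ gives the equality of kernels, which one then pulls back along $\SH^c(k)\row\SH^c(\wt{k})$. The paper's own proof is just a terser version of your diagram chase.
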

	
	\begin{proof}
		Indeed, since $\wt{k}$ is flexible, by Proposition \ref{D-A9},
		$$
		\op{ker}(\SH^c(\wt{k})\row \SH_{(p,m)}(\wt{E}/\wt{k}))=\op{ker}(\SH^c(\wt{k})\row \SH_{(p,m)}(\wt{E}/\wt{E})).
		$$
		Then their pre-images under $\SH^c(k)\row \SH^c(\wt{k})$ coincide as well.
		\Qed
	\end{proof}
	
	Now we can compare these ideals for $K(p,m)$-equivalent extensions (not necessarily finitely-generated).
	
	\begin{proposition}
		\label{ideals-Kpm-eq-ext}
		Let $E/k$ and $F/k$ be arbitrary field extensions. Then
		$$
		E/k\stackrel{(p,m)}{\sim}F/k\hspace{5mm}\text{if and only if}
		\hspace{5mm}{\frak{a}}_{(p,m),E}={\frak{a}}_{(p,m),F}.
		$$
	\end{proposition}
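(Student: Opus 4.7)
The plan is to reduce to finitely-generated extensions via compactness, handle that case using the local-category framework of Section \ref{section-lsh}, and detect the $(p,m)$-equivalence class of an arbitrary extension by means of explicit test objects $\Sigma^{\infty}_{\pp^1}P$.

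For the forward direction, I would first establish two preliminary facts via Proposition \ref{altern-def-isotr-sh}: that for a compact $U$ and any extension $L/k$, the condition $U\in{\frak a}_{(p,m),L}$ is witnessed by a finite diagram built from ${\mathbbm 1}^{MGL}/v_m$ and finitely many $K(p,m)$-anisotropic $\wt L$-varieties (which descends to $\wt{L'}$ for some finitely-generated $L'\subset L$, using that anisotropy is preserved by descent, being contrapositive to base-change preservation of isotropy); and the monotonicity ${\frak a}_{(p,m),L'}\subseteq{\frak a}_{(p,m),L}$ for $L'\subset L$ (base change of a zero is a zero). With these, the forward direction reduces to the case of finitely-generated $E'\stackrel{(p,m)}{\sim} F'$. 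In the f.g.\ case, Corollary \ref{fi-psi-ker} identifies both ideals with kernels of local realisations into $\SH_{(p,m)}(\wt{E'}/\wt k)$ and $\SH_{(p,m)}(\wt{F'}/\wt k)$, whose defining projectors $\wt{{\frak X}}_{{\mathbf Q}_{E'}}\wedge{\frak X}_{P_{E'}}$ and $\wt{{\frak X}}_{{\mathbf Q}_{F'}}\wedge{\frak X}_{P_{F'}}$ involve $(p,m)$-equivalent families ${\mathbf Q}_{\bullet}$ and models $P_{\bullet}$ linked by mutual $K(p,m)$-correspondences. Following the template of Proposition \ref{ker-loc-var-fin-type} and Lemma \ref{D-A8}, these correspondences allow one to transport a witness of $\wt{{\frak X}}_Q\otimes{\frak X}_{P_{E'}}\otimes M^{MGL}(U_{\wt k})$ lying in the ${\mathbbm 1}^{MGL}/v_m$-generated tensor ideal into an analogous witness for $F'$, giving the equality of ideals; the general case follows by combining this with monotonicity and the compactness-descent step.

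For the reverse direction, suppose ${\frak a}_{(p,m),E}={\frak a}_{(p,m),F}$. For any finitely-generated $E_\alpha\subset E$ with smooth projective model $P_\alpha$, I would use the test object $\Sigma^{\infty}_{\pp^1}P_\alpha\in\SH^c(k)$. Since $P_\alpha$ has a rational point over $E_\alpha\subset E$, it is $K(p,m)$-isotropic over $\wt E$; the degree-one $K(p,m)$-cycle on $P_{\alpha,\wt E}$ produces a non-zero class in the numerical $K(p,m)$-motive, so by Proposition \ref{vm-nilp-Km-triv} the $\widehat M^{MGL}$-motive of $\Sigma^{\infty}_{\pp^1}P_\alpha$ is not $v_m$-nilpotent, giving $\Sigma^{\infty}_{\pp^1}P_\alpha\notin{\frak a}_{(p,m),E}$. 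The hypothesis then forces $\Sigma^{\infty}_{\pp^1}P_\alpha\notin{\frak a}_{(p,m),F}$, whence $P_\alpha$ is $K(p,m)$-isotropic over $\wt F$. Compactness of this isotropy witness descends it to $\wt{F_\beta}$ for some finitely-generated $F_\beta\subset F$, and the section property of the purely transcendental extension $\wt{F_\beta}/F_\beta$ with respect to $K(p,m)$-push-forwards descends it further to $F_\beta$, giving $F_\beta\stackrel{(p,m)}{\geq} E_\alpha$. Since $E_\alpha$ was arbitrary, $F\stackrel{(p,m)}{\geq} E$, and by symmetry $E\stackrel{(p,m)}{\sim} F$.

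The main obstacle is the finitely-generated case of the forward direction: rigorously showing that the local category $\SH_{(p,m)}(\wt{E'}/\wt k)$ depends on $E'$ only through its $(p,m)$-equivalence class requires translating the abstract data of mutual $K(p,m)$-correspondences $P_{E'}\rightsquigarrow P_{F'}$ into explicit manipulations with the projectors $\wt{{\frak X}}_{{\mathbf Q}_\bullet}\wedge{\frak X}_{P_\bullet}$ and the ${\mathbbm 1}^{MGL}/v_m$-generated tensor-localising ideal, in the spirit of the proofs of Proposition \ref{ker-loc-var-fin-type} and Lemma \ref{D-A8}.
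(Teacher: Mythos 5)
Your overall architecture (compactness descent of the witness, reduction towards finitely generated extensions, local categories plus Corollary \ref{fi-psi-ker} for the forward direction, suspension spectra of smooth models as test objects for the converse) matches the paper's, and your reverse direction is essentially the paper's argument. But the forward direction has two genuine gaps. The decisive one is the finitely-generated case itself, which you explicitly defer as ``the main obstacle'': the mere existence of mutual degree-one $K(p,m)$-correspondences between $P$ and $R$ does not by itself let you ``transport a witness''. The paper's mechanism is concrete: since $I(P_{\wt{E}})\subset\laz$ is invariant under Landweber--Novikov operations and $P_{\wt{E}}$ is $K(p,m)$-isotropic, Landweber's classification forces the $p$-part of $\sqrt{I(P_{\wt{E}})}$ to contain $v_m$, so there is an $\Omega^*$-cycle on $P\times R$ whose pushforward to $R$ equals $l\cdot v_m^r\cdot 1_R$ with $l$ prime to $p$; hence $l\cdot v_m^r$ on $M^{MGL}(R)$ factors through $M^{MGL}(P\times R)$. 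Tensoring with $\wt{\hi}_{{\mathbf Q}}$ (which is killed by $p$ because degree-$p$ extensions of the flexible field $\wt{k}$ stay anisotropic over $\wt{E}$ and $\wt{F}$) removes $l$, and since the cone of $v_m^r$ lies in the tensor-localising ideal generated by ${\mathbbm{1}}^{MGL}/v_m$, membership in that ideal transfers back and forth between $\hi_R\otimes\wt{\hi}_{{\mathbf Q}}\otimes M^{MGL}(U)$ and $\hi_P\otimes\wt{\hi}_{{\mathbf Q}}\otimes M^{MGL}(U)$. Without this (or an equivalent) argument the forward direction is not established.

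The second gap is your ``monotonicity'' ${\frak a}_{(p,m),L'}\subseteq{\frak a}_{(p,m),L}$ for $L'\subset L$. This is false: by Example \ref{exa-norm-var}, $\Sigma^{\infty}_{\pp^1}Q_{\alpha}\in{\frak a}_{(p,n),k}$ for a norm variety of a nonzero $n$-symbol, yet over $E=k(Q_{\alpha})$ a Tate motive splits off and $\Sigma^{\infty}_{\pp^1}Q_{\alpha}\notin{\frak a}_{(p,n),E}$. The slogan ``base change of a zero is a zero'' fails because varieties anisotropic over $\wt{L'}$ may become isotropic over $\wt{L}$, so there is no induced functor between the two isotropic quotients. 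Relatedly, reducing to ``finitely generated $E'\stackrel{(p,m)}{\sim}F'$'' is problematic even with correct descent, since $K(p,m)$-equivalent infinite extensions need not admit cofinal families of pairwise equivalent finitely generated subextensions. The paper avoids both issues by a different reduction: the witness for $U\in{\frak a}_{(p,m),E}$ descends to $\wt{E}_{\alpha}=\wt{k}(R_{\alpha})$, the anisotropic varieties involved remain anisotropic over the composite $\wt{F}(R_{\alpha})$ because $E\stackrel{(p,m)}{\geq}F$, conservativity (Proposition \ref{D-A9}) then shows $U_{\wt{F}}$ dies in the local category $\SH_{(p,m)}(\wt{F}(R_{\alpha})/\wt{F})$, and finally $\wt{F}(R_{\alpha})/\wt{F}\stackrel{(p,m)}{\sim}\wt{F}/\wt{F}$ (as $F\stackrel{(p,m)}{\geq}E$ makes $R_{\alpha}$ isotropic over $\wt{F}$), so only the finitely-generated comparison over the flexible base $\wt{F}$ is needed. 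You should replace your reduction by this one and supply the Landweber--Novikov step above.
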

	
	\begin{proof}
		Let $E/k\stackrel{(p,m)}{\sim}F/k$ be two $K(p,m)$-equivalent field extensions. Then $\wt{E}/\wt{k}\stackrel{(p,m)}{\sim}\wt{F}/\wt{k}$ as well. 
		Let $E=\op{colim}_{\alpha}E_{\alpha}$ and 
		$F=\op{colim}_{\beta}F_{\beta}$, where $E_{\alpha}=k(R_{\alpha})$ and
		$F_{\beta}=k(P_{\beta})$ are finitely-generated extensions with
		smooth models $R_{\alpha}$ and $P_{\beta}$.
		
		Let $U\in {\frak{a}}_{(p,m),E}$. Then, since $M^{MGL}(U_{\wt{E}})$ is compact, by Proposition \ref{altern-def-isotr-sh},
		it is a direct summand of some extension of $MGL$-motives of a finite number of $K(p,m)$-anisotropic varieties $S$
		over $\wt{E}$ as well as finitely many
		objects of the type $M^{MGL}(Y)\otimes{\mathbbm{1}}^{MGL}/v_m$,
		for some $\wt{E}$-varieties $Y$. All the varieties and 
		morphisms involved are defined over some finitely-generated
		extension $\wt{E}_{\alpha}=\wt{k}(R_{\alpha})$. 
		Since $E/k\stackrel{(p,m)}{\sim}F/k$,
		the respective varieties $S$ will stay $K(p,m)$-anisotropic
		over $\wt{F}(R_{\alpha})$. Thus, $U_{\wt{F}(R_{\alpha})}\in
		{\frak a}_{(p,m),\wt{F}(R_{\alpha})/\wt{F}(R_{\alpha})}$.
		By Proposition \ref{D-A9}, $U_{\wt{F}}$ vanishes when restricted
		to $\SH_{(p,m)}(\wt{F}(R_{\alpha})/\wt{F})$. But since 
		$F/k\stackrel{(p,m)}{\geq}E/k$, the variety
		$R_{\alpha}|_{\wt{F}}$ is $K(p,m)$-isotropic and so,
		$\wt{F}(R_{\alpha})/\wt{F}$ and $\wt{F}/\wt{F}$ are
		$K(p,m)$-equivalent finitely-generated field extensions.
		So, in order to show that $U\in{\frak{a}}_{(p,m),F}$,
		in other words, that $U_{\wt{F}}$ vanishes when restricted 
		to $\SH_{(p,m)}(\wt{F}/\wt{F})$,
		it is sufficient to prove the finitely-generated case 
		of our statement. Below we assume that $\wt{E}=\wt{k}(R)$ and $\wt{F}=\wt{k}(P)$
		are finitely-generated and $K(p,m)$-equivalent (over $\wt{k}$).
		Then, the varieties ${\mathbf Q}$ are the same, $\Upsilon_{(p,m),\wt{E}/\wt{k}}={\frak{X}}_R\wedge\wt{{\frak{X}}}_{{\mathbf Q}}$ and $\Upsilon_{(p,m),\wt{F}/\wt{k}}={\frak{X}}_P\wedge\wt{{\frak{X}}}_{{\mathbf Q}}$.
		
		We know that $I(P_{\wt{E}})=\op{Im}(\Omega_*(P_{\wt{E}})\stackrel{\pi_*}{\lrow}\Omega_*(\op{Spec}(\wt{E})))\subset\laz$ is a finitely generated ideal invariant under Landweber-Novikov operations. So, by the results of Landweber, the radical $\sqrt{I(P_{\wt{E}})}$ of it is an intersection of finitely many ideals $I(q,n)$ of Landweber. Since 
		$\wt{E}/\wt{k}\stackrel{(p,m)}{\geq}\wt{F}/\wt{k}$, 
		the variety $P_{\wt{E}}$ is $K(p,m)$-isotropic, then
		by Proposition \ref{anis-Pm-Km-IX},
		this radical localised at $p$ contains
		$v_m$ and the $p$-component of it is $I(p,n)$, for some $n>m$, or the whole ring $\laz_{(p)}$. 
		This shows that there is an $\Omega^*$-cycle on $P\times R$
		whose projection to $R$ is $l\cdot v_m^r\cdot 1_R$, for some $r$
		and some $l$ relatively prime to $p$.
		In other words, we have a commutative diagram
		$$
		\xymatrix{
			M^{MGL}(R)(*)[2*] \ar[d] \ar[r] & T(*)[2*] \ar[d]^{l\cdot v_m^r}\\
			M^{MGL}(P) \ar[r] & T
		}
		$$
		where $T=M^{MGL}(\op{Spec}(\wt{k}))$ and horizontal maps are induced by projections to the point. Multiplied by $M^{MGL}(R)$ the upper map acquires a 
		splitting (given by the diagonal map) and we see that the
		map $M^{MGL}(R)(*)[2*]\stackrel{l\cdot v_m^r}{\lrow}M^{MGL}(R)$
		factors through $M^{MGL}(P\times R)$. 
		Note that as $\wt{k}$ is flexible, there are non-trivial extensions $L/\wt{k}$ of degree $p$ which stay anisotropic over $\wt{E}$ and $\wt{F}$.
		Thus, the identity map of $\wt{\hi}_{{\mathbf Q}}$ is killed by $p$, and we may get rid of the prime to $p$ number $l$, if we tensor with 
		$\wt{\hi}_{{\mathbf Q}}$. That is, the map 
		$$
                M^{MGL}(R)\otimes\wt{\hi}_{{\mathbf Q}}(*)[2*]\stackrel{v_m^r}{\lrow}M^{MGL}(R)\otimes\wt{\hi}_{{\mathbf Q}}
               $$ 
                factors 
		through $M^{MGL}(P\times R)\otimes\wt{\hi}_{{\mathbf Q}}$.
		Since $\wt{F}/\wt{k}\stackrel{(p,m)}{\geq}\wt{E}/\wt{k}$ as well,
		the map 
		$$
                M^{MGL}(P)\otimes\wt{\hi}_{{\mathbf Q}}(*')[2*']\stackrel{v_m^s}{\lrow}M^{MGL}(P)\otimes\wt{\hi}_{{\mathbf Q}}
		$$ 
 		factors 
		through $M^{MGL}(P\times R)\otimes\wt{\hi}_{{\mathbf Q}}$, for some $s$. Observe, that the cones of these two maps
		belong to the tensor localising ideal generated by ${\mathbbm{1}}^{MGL}/v_m$. It follows that, for a given
		object $U\in \SH^c(\wt{k})$, if
		$M^{MGL}(U)\otimes\hi_R\otimes\wt{\hi}_{{\mathbf{Q}}}$ belongs
		to this ideal, then so does
		$M^{MGL}(U)\otimes M^{MGL}(R)\otimes\wt{\hi}_{{\mathbf{Q}}}$
		and hence, 
		$M^{MGL}(U)\otimes M^{MGL}(P)\otimes\wt{\hi}_{{\mathbf{Q}}}$
		and 
		$M^{MGL}(U)\otimes \hi_P\otimes\wt{\hi}_{{\mathbf{Q}}}$. 
		And vice-versa. 
		Thus, $\op{ker}(\ffi^c_{(p,m),E})=\op{ker}(\ffi^c_{(p,m),F})$.

		Conversely, if $E/k\stackrel{(p,m)}{\not\sim}F/k$, then
		$E=k(R)$, $F=k(P)$, where (at least) one of the varieties
		$R_F$, $P_E$ is $K(p,m)$-anisotropic. From symmetry, may assume 
		the former. Then $\Sigma^{\infty}_{\pp^1}R_{+}\in {\frak{a}}_{(p,m),F}$ (as $R$ remains anisotropic over $\wt{F}$).
		On the other hand, a Tate-motive splits from $M^{MGL}(R)$ over $E$, and so, the image of this object in $\widehat{\MGL}_{(p,m)}\!-\!mod$ is not annihilated by any power of $v_m$. Hence, $\Sigma^{\infty}_{\pp^1}R_{+}\not\in {\frak{a}}_{(p,m),E}$ and the ideals are different.
		\Qed
	\end{proof}

	Now we are ready to prove part $(2)$ of the Main Theorem \ref{Main}.

	$(2)$: We have a natural homomorphism of spectra
	$\rho:\Spc(\SH^c(k))\row\op{Spec}(\op{End}_{\SH(k)}({\mathbbm{1}}))$ - see \cite[Corollary 5.6]{BalSSS},
	where $\op{End}_{\SH(k)}({\mathbbm{1}})=\op{GW}(k)$. The
	image of ${\frak{a}}_{(p,m),E}$ is the ideal consisting of those
	virtual quadratic forms $\ffi$ that ${\mathbbm{1}}/\ffi:=\op{Cone}({\mathbbm{1}}\stackrel{\ffi}{\row}{\mathbbm{1}})$ is not contained
	in ${\frak{a}}_{(p,m),E}$. Suppose, $\ffi$ has dimension $d$ divisible by $p$. Consider the projection of ${\mathbbm{1}}/\ffi$
	via the composition
	$$
	\SH^c(k)\hspace{-1mm}\row\widehat{\SH}^c_{(p,m)}(\wt{E}/\wt{E})\hspace{-1mm}
	\stackrel{\widehat{M}^{MGL}}{\row}\hspace{-1.5mm}\widehat{\MGL}^c_{(p,m)}\op{-}mod(\wt{E})\hspace{-1mm}\stackrel{t_{K(p,m)}}		 {\lrow}\hspace{-1.5mm}K^b(Chow^{K(p,m)}_{Num}(\wt{E})).
	$$
	The category $\widehat{\MGL}^c_{(p,m)}(\wt{E})\op{-}mod$ is $p$-torsion,
	so $\ffi$ projects to zero there. Hence, the projection of
	${\mathbbm{1}}/\ffi$ to
	$K^b(Chow^{K(p,m)}_{Num}(\wt{E}))$ is a direct sum of two Tate-motives, and so, is non-zero. On the other hand, for $\ffi=\la 1\ra$
	(the unit of $\op{GW}(k)$), the object ${\mathbbm{1}}/\la1\ra$ is zero
	already in $SH^c(k)$. Hence, $\rho({\frak a}_{(p,m),E})={\frak q}_p$ is the image of $(p)$ under the natural map 
	$\op{Spec}(\zz)\row\op{Spec}(\op{GW}(k))$ induced by the rank map
	$rank:\op{GW}(k)\row\zz$ mapping $\ffi$ to $\ddim(\ffi)$. Hence, if ${\frak{a}}_{(p,m),E}={\frak{a}}_{(q,n),F}$, then $p=q$. 
	
	The invariant $m$ of ${\frak{a}}_{(p,m),E}$ can be recovered with the help of certain test spaces. Namely, by \cite[Theorem 3.1]{TS} (which is an adaptation to the algebro-geometric context of the construction of Mitchell \cite{Mit}), for any field $k$ of characteristic different from $p$, there exists an object $X_{p,n}$
	in $\SH^c(k)_{(p)}$, whose $p$-level is exactly $n$, that is, the $r$-th 
	Morava K-theory vanishes on $X_{p,n}$, for $r<n$, and is 
	non-zero, for $r\geq n$. Moreover, for any $r<n$, 
	$M^{MGL}(X_{p,n})$ is killed by some power of $v_r$. Such an object can be constructed as a direct summand of the suspension spectrum of some smooth variety, namely, of the quotient
	$V\backslash GL(p^{n})$, where $V=(\zz/p)^{n}$ acts by permutations on the basis, for odd $p$ (respectively, of
	$V\backslash GL(2^{n+1})$, where $V=(\zz/2)^{n+1}$, for $p=2$). The important feature of $X_{p,n}$ is
	that the properties of it are unaffected by field extensions.
	The number $m$ is uniquely determined from the following result.
	
	\begin{proposition}
		\label{m-Xpn}
		$X_{p,n}\in {\frak{a}}_{(p,m),E}$, for any $n>m$, and
		$X_{p,n}\not\in {\frak{a}}_{(p,m),E}$, for any $n\leq m$.
	\end{proposition}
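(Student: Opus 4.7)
The proof splits into the two cases $n>m$ (part $(a)$) and $n\le m$ (part $(b)$).

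For part $(a)$, since $m<n$, the Mitchell/TS construction of $X_{p,n}$ guarantees that $v_m$ acts nilpotently on $M^{MGL}(X_{p,n})$. Then Balmer's nilpotence result \cite[Theorem 2.15]{BalSSS} places $M^{MGL}(X_{p,n})$ in the thick $\otimes$-ideal of $\MGL^c\text{-mod}$ generated by $\mathbbm{1}^{MGL}/v_m$, and Proposition \ref{altern-def-isotr-sh} implies $X_{p,n}$ is killed by the projection $\SH^c(k)\to\SH^c_{(p,m)}(k/k)$. The $v_m$-nilpotence is preserved after base change to $\wt{E}$, so $X_{p,n}\in\mathfrak{a}_{(p,m),E}$.

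For part $(b)$, I want to show $\psi_{(p,m),E}(X_{p,n})\ne 0$. By Proposition \ref{vm-nilp-Km-triv}, it suffices to prove $t_{K(p,m)}(\widehat{M}^{MGL}(X_{p,n}|_{\wt{E}}))\ne 0$ in $K^b(Chow^{K(p,m)}_{Num}(\wt{E}))$, equivalently (Proposition \ref{altern-def-isotr-sh}) that $M^{MGL}(X_{p,n}|_{\wt{E}})$ does not belong to the thick $\otimes$-ideal $\mathcal{J}$ generated by $\mathbbm{1}^{MGL}/v_m$ and the $MGL$-motives of $K(p,m)$-anisotropic varieties. The global weight-complex functor $t_{K(p,m)}$ is $\otimes$-triangulated and kills both types of generators of $\mathcal{J}$: it sends $\mathbbm{1}^{MGL}/v_m$ to zero because $v_m$ is invertible in $K(p,m)$, and it sends the $MGL$-motive of a $K(p,m)$-anisotropic $Q$ to zero because the isotropic $K(p,m)$-Chow motive of such $Q$ vanishes (its identity class is anisotropic) and coincides with the numerical one over the flexible field $\wt{E}$ by Theorem \ref{Thm-iso-num}. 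It therefore remains to show $t_{K(p,m)}(M^{MGL}(X_{p,n}|_{\wt{E}}))\ne 0$.

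For this I would exploit the iterated-cofiber description of $X_{p,n}$ from [TS]: $M^{MGL}(X_{p,n})$ is expressible as an iterated cone of the classes $p=v_0, v_1, \ldots, v_{n-1}$ acting on $\mathbbm{1}^{MGL}$ (absorbing the Landweber-Novikov free factors into Tate summands), so its strong weight complex is a length-$2^n$ Koszul-type complex of shifted Tate $\Omega$-Chow motives with differentials given by these $v_j$'s. After projection to $K^b(Chow^{K(p,m)}_{Num}(\wt{E}))$ every differential vanishes, since $v_j\in I(p,m)$ for $j<n\le m$ and $I(p,m)$ acts trivially on the $K(p,m)$-numerical Chow category; by Proposition \ref{zero-dif} the resulting complex decomposes into a non-zero direct sum of shifted unit Tate motives.

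The main obstacle will be extracting the iterated-cofiber presentation above from the Mitchell idempotent construction of [TS], where $X_{p,n}$ is originally cut out from $\Sigma^{\infty}_{\pp^1}(V\backslash GL(p^n))_+$; in place of this explicit description one may equivalently use the free-module structure $BP_{*,*}(X_{p,n})\cong BP/I(p,n)\otimes(\text{free})$ established in [TS], which underlies both the non-vanishing $K(p,m)^{*,*}(X_{p,n})\ne 0$ for $m\ge n$ and the triviality of the Morava weight-complex differentials claimed above.
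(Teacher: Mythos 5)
Your part $(a)$ is essentially the paper's argument: both rest on the fact (\cite[Theorem 3.2]{TS}) that $v_m$ acts nilpotently on $M^{MGL}(X_{p,n})$ for $m<n$, which is stable under base change to $\wt{E}$ and immediately places $X_{p,n}$ in ${\frak a}_{(p,m),E}$ by the very description of the kernel of $\widehat{\SH}^c_{(p,m)}(\wt{E}/\wt{E})\row\SH^c_{(p,m)}(\wt{E}/\wt{E})$ (your detour through Proposition \ref{altern-def-isotr-sh} is harmless but not needed).

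Part $(b)$ has a genuine gap. Your reduction to showing $t_{K(p,m)}(\widehat{M}^{MGL}(X_{p,n}|_{\wt{E}}))\neq 0$ is sound, but the non-vanishing itself is exactly the hard content, and the input you propose for it is not available. The claimed presentation of $M^{MGL}(X_{p,n})$ as an iterated cone of $v_0,\dots,v_{n-1}$ on (Tate twists of) $\mathbbm{1}^{MGL}$ would make $X_{p,n}$ a generalized Smith--Toda complex $V(n-1)$ up to free factors; such objects do not exist for general $p$ and $n$ (already in topology), and avoiding them is precisely the point of Mitchell's idempotent construction. The fallback you offer, a free-module structure $BP_{*,*}(X_{p,n})\cong BP/I(p,n)\otimes(\text{free})$, is likewise not established in \cite{TS} (Mitchell's complexes are characterized by $A(n)$-freeness of their mod-$p$ cohomology, which controls the Morava K-theories but does not yield such a clean $BP$-module structure), so the assertion that all Morava weight-complex differentials vanish is unsupported. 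You flag this as ``the main obstacle'' but do not close it. The paper's own proof of $(b)$ sidesteps any structural computation: it simply invokes \cite[Corollary 3.6]{TS}, which states directly that for $n\leq m$ the object $\widehat{M}^{MGL}(X_{p,n})\in\widehat{\MGL}_{(p,m)}(\wt{E})\!-\!mod$ is not killed by any power of $v_m$; by Proposition \ref{SHhat-ideal} this is exactly non-membership in ${\frak a}_{(p,m),E}$. To repair your argument, replace the Koszul-complex claim by a citation of that result (or of whatever non-nilpotence statement for $X_{p,n}$ you can actually prove).
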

	
	\begin{proof}
		Since $M^{MGL}(X_{p,n})$ is killed by some power of $v_m$ by
		\cite[Theorem 3.2]{TS}, 
		$X_{p,n}\in {\frak{a}}_{(p,m),E}$, for any $n>m$. On the other
		hand, if $n\leq m$, then $\widehat{M}^{MGL}(X_{p,n})\in\widehat{\MGL}_{(p,m)}(\wt{E})\!-\!mod$ is not killed by any power of $v_m$, by \cite[Corollary 3.6]{TS}, and so,
		$X_{p,n}\not\in {\frak{a}}_{(p,m),E}$.
		\Qed
	\end{proof}
	
	It follows from Proposition \ref{m-Xpn} that if 
	${\frak{a}}_{(p,m),E}={\frak{a}}_{(p,n),F}$, then $n=m$.
	Combining it with the Proposition \ref{ideals-Kpm-eq-ext},
	we complete the proof of part $(2)$ of the Main Theorem.
	\Qed
	
	\begin{remark}
		\label{strange-iso-spec}
		Observe, that we don't have the classical specialisation relations among isotropic points. Indeed, while their topological counterparts ${\frak a}_{(p,m),Top}$ satisfy:
		${\frak a}_{(p,n),Top}\subset{\frak a}_{(p,m),Top}$, for $n>m$,
		and so, the former point is a specialisation of the latter,
		this doesn't hold for isotropic points ${\frak a}_{(p,m),E}$.
		The obstruction comes (for example) from the (suspension spectra) of norm-varieties - see Example \ref{exa-norm-var}. Indeed, if $n>m$ and $\alpha\in K^M_n(k)/p$ is a non-zero pure symbol, then the suspension spectrum $\Sigma^{\infty}_{\pp^1}(Q_{\alpha})$ of
		the norm-variety $Q_{\alpha}$ belongs to ${\frak a}_{(p,n),k}$,
		but not to ${\frak a}_{(p,m),k}$.
		
		The reason for this is that the isotropic ideals are generated
		not only by $v_m$-nilpotent objects, as in topology, but also
		by $K(p,m)$-anisotropic ones. These, in a sense, counterbalance each other - compare the properties of torsion
		spaces $X_{p,n}$ from Proposition \ref{m-Xpn} with those of
		$\Sigma^{\infty}_{\pp^1}(Q_{\alpha})$ from Example \ref{exa-norm-var}.
		\Red 
	\end{remark}
	
	\begin{remark}
		It is not enough to consider finitely-generated extensions only.
		The vast majority of $(p,m)$-equivalence classes will
		not have any finitely-generated representatives -
		see \cite[Example 5.14]{INCHKm}.
		\Red
	\end{remark}
	
	For $m\leq n$, the $K(p,m)$-isotropic varieties are 
	$K(p,n)$-isotropic (see Proposition \ref{anis-Pm-Km-IX}), hence
	the equivalence relation $\stackrel{(p,m)}{\sim}$ is {\it coarser} than $\stackrel{(p,n)}{\sim}$. Thus, the set of
	$K(p,n)$-isotropic points naturally surjects to the set of
	$K(p,m)$-isotropic ones. So, in the end, all these Morava-points
	${\frak a}_{(p,m),E}$ form a ``quotient'' of motivic isotropic
	points ${\frak a}_{p,E}$. 
	The following example shows that these quotients are still pretty
	large.
	
	\begin{example} {\rm (cf. \cite[Example 5.14]{INCHKm})}
		\label{real-case}
		For an algebraically closed field $k$, we have a single 
		$\stackrel{(p,m)}{\sim}$-equivalence class of extensions
		(as all varieties have rational points and so, are $K(p,m)$-isotropic), and hence, a single {\it isotropic point}, for
		every $p$ and $m$.
		
		For $k=\rr$, we have a unique 
		$\stackrel{(p,m)}{\sim}$-equivalence class of extensions,
		for any odd $p$. But for $p=2$ the situation is completely different. Observe, that a curve of genus one is $K(p,1)$-isotropic if and only if it is $K(p,\infty)$-isotropic. This
		follows from Proposition \ref{anis-Pm-Km-IX} and the fact that
		the ideal $I(X)\subset BP$ is generated by the images of the unit $1_X\in BP(X)$ (which is zero, in our case) and the classes of all closed points. Thus, the extensions $E_J$ constructed in 
		\cite[Example 5.14]{INCHKm} are not $K(2,m)$-equivalent,
		for any $1\leq m\leq\infty$. Hence, for any such $m$, we get
		$2^{2^{\aleph_0}}$ different $K(2,m)$-isotropic points
		${\frak a}_{(2,m),E}$.
	\end{example}
	
	\begin{remark}
	 The isotropic points are different from the topological points. This can be seen by their action on 
	 $\tau:{\mathbbm{1}}/p(-1)\row{\mathbbm{1}}/p$. While the
	 topological realisation inverts $\tau$, the isotropic one
	 maps it to zero. Thus, $\op{Cone}(\tau)\in{\frak a}_{(p,m),Top}$, while $\op{Cone}(\tau)\not\in{\frak a}_{(p,m),E}$, for any $p,m$ and $E$. The latter fact is obvious, since $A:=t_{K(p,m)}({\mathbbm{1}}/p)\in K^b(Chow^{K(p,m)}_{Num}(\wt{E}))$ is non-zero. More precisely, is a sum of two Tate-motives, which are exactly the {\it Morava weight cohomology} of it 
	 (Definition \ref{Morava-weight-coh}). Then no map
	 $f:A(-1)\row A$ may be invertible, since it shifts the
	 weight grading. By Proposition \ref{vm-nilp-Km-triv},
	 $\op{Cone}(\tau)\not\in{\frak a}_{(p,m),E}$.
	 \Red
	\end{remark}

	\bigskip
	
	\begin{itemize}
		\item[address:] Peng Du: {\small Institute of Mathematics, Henan Academy of Sciences, No. 228, Chongshili, Zhengdong New District, Zhengzhou, Henan 450046, P. R. China}
         \item[] Alexander Vishik: {\small School of Mathematical Sciences, University of Nottingham, University Park, Nottingham, NG7 2RD, UK}
		\item[email:] {\small\ttfamily pengdudp@gmail.com}
		\item[] {\small\ttfamily alexander.vishik@nottingham.ac.uk}
	\end{itemize}

\end{document}